\tikzset{Rightarrow/.style={double equal sign distance,>={Implies},->},
triple/.style={-,preaction={draw,Rightarrow}},
quadruple/.style={preaction={draw,Rightarrow,shorten >=0pt},shorten >=1pt,-,double,double
distance=0.2pt}}
\tikzset{%
    symbol/.style={%
        draw=none,
        every to/.append style={%
            edge node={node [sloped, allow upside down, auto=false]{$#1$}}}
    }
}
\tikzset{%
scalearrow/.style n args={3}{
  decoration={
    markings,
    mark=at position (1-#1)/2*\pgfdecoratedpathlength
      with {\coordinate (#2);},
    mark=at position (1+#1)/2*\pgfdecoratedpathlength
      with {\coordinate (#3);},
    },
  postaction=decorate,
  }
}
\theoremstyle{plain}   
\newtheorem{thm}{Theorem}[section] 
\let\c@thm\c@thm\makeatother
\let\c@cor\c@thm\makeatother
\newtheorem{lem}{Lemma}[section]
\let\c@lem\c@thm\makeatother
\newtheorem{prop}{Proposition}[section]
\let\c@prop\c@thm\makeatother
\let\c@claim\c@thm\makeatother
\let\c@conjecture\c@thm\makeatother
\let\c@wconjecture\c@thm\makeatother
\newtheorem*{unnumberedtheorem}{Theorem}
\newtheorem*{unnumberedcorollary}{Corollary}
\theoremstyle{definition}
\newtheorem{defn}{Definition}[section]
\let\c@defn\c@thm\makeatother
\newtheorem{const}{Construction}[section]
\let\c@const\c@thm\makeatother
\newtheorem{notn}{Notation}[section]
\let\c@notn\c@thm\makeatother
\let\c@convention\c@thm\makeatother
\let\c@convention\c@thm\makeatother
\theoremstyle{remark}
\newtheorem{rmk}{Remark}[section]
\let\c@rmk\c@thm\makeatother
\let\c@ex\c@thm\makeatother
\let\c@observation\c@thm\makeatother
\let\c@warning\c@thm\makeatother
\let\c@digression\c@thm\makeatother
\let\c@answ\c@thm\makeatother
\let\c@answ\c@thm\makeatother
\let\c@aside\c@thm\makeatother
\let\c@equation\c@thm
\numberwithin{equation}{section}
\crefname{lem}{Lemma}{Lemmas}
\crefname{thm}{Theorem}{Theorems}
\crefname{defn}{Definition}{Definitions}
\crefname{notn}{Notation}{Notations}
\crefname{const}{Construction}{Constructions}
\crefname{prop}{Proposition}{Propositions}
\crefname{rmk}{Remark}{Remarks}
\crefname{cor}{Corollary}{Corollaries}
\crefname{equation}{Display}{Displays}
\crefname{ex}{Example}{Examples}
\crefname{thmalph}{Theorem}{Theorems}
\crefname{answ}{Answer}{Answers}
\crefname{question}{Question}{Questions}
\newcommand{\@bbify}[1]{
  \ifcsname b#1\endcsname
  \message{WARNING: Overwriting b#1 with blackboard letter!}
  \fi
  \expandafter\edef\csname b#1\endcsname
  {\noexpand\ensuremath{\noexpand\mathbb #1}\noexpand\xspace}}
\newcommand{\@calify}[1]{
  \ifcsname c#1\endcsname
  \message{WARNING: Overwriting c#1 with calligraphic letter!}
  \fi 
  \expandafter\edef\csname c#1\endcsname
  {\noexpand\ensuremath{\noexpand\mathcal #1}\noexpand\xspace}}
\newcommand{\@bfify}[1]{
  \ifcsname bf#1\endcsname
  \message{WARNING: Overwriting c#1 with bold letter!}
  \fi
  \expandafter\edef\csname bf#1\endcsname
  {\noexpand\ensuremath{\noexpand\mathbf #1}\noexpand\xspace}}
\newcounter{@letter}\stepcounter{@letter}
\loop\@bbify{\Alph{@letter}}\@calify{\Alph{@letter}}\@bfify{\Alph{@letter}}
\newcommand{\cat}{\cC\!\mathit{at}}
 \newcommand{\omegacat}{\omega\cat}
\DeclareMathOperator{\Ob}{Ob}
\DeclareMathOperator*{\colim}{colim}
\DeclareMathOperator{\id}{id}
\DeclareMathOperator{\mor}{mor}
\DeclareMathOperator{\eq}{eq}
\DeclareMathOperator{\bieq}{bieq}
\newcommand{\inttrunc}[1]{\tau_{\leq #1}^{\mathrm{i}}}
\newcommand{\indind}{k}
\newcommand{\firstiso}{\alpha}
\newcommand{\secondiso}{\beta}
\newcommand\omegat{\texorpdfstring{$\omega$}{omega}}
\newcommand\nunderset[2]{\;\underset{\mathclap{#1}}{#2}\;}
   \def\MR#1{}
\author[Hadzihasanovic]{Amar Hadzihasanovic}
\address{Tallinn University of Technology, 
Tallinn,
Estonia
}
\email{amar.hadzihasanovic@taltech.ee}
\author[Loubaton]{F\'elix Loubaton}
\address{Max Planck Institute for Mathematics, Bonn, Germany}
\email{loubaton@mpim-bonn.mpg.de}
\author[Ozornova]{Viktoriya Ozornova}
\address{Max Planck Institute for Mathematics, Bonn, Germany}
\email{viktoriya.ozornova@mpim-bonn.mpg.de}
\author[Rovelli]{Martina Rovelli}
\address{
University of Massachusetts Amherst, 
Amherst (MA),
USA
}
\email{mrovelli@umass.edu} 
\subjclass[2020]{18N30; 18N20; 18N40}
\title{A model for the coherent walking \omegat-equivalence}
\begin{document}

\maketitle

\begin{abstract}
We prove that a certain $\omega$-category, which was constructed in previous work by the third and fourth author, is a model for the fully coherent walking $\omega$-equivalence. Further, appropriate truncations of it give models for the fully coherent walking $n$-equivalence for each $n\geq1$.
\end{abstract}

\section*{Introduction}

An \emph{$\omega$-category} is a type of strict categorical structure which allows for cells in each positive dimension, together with composition and identity operators, which satisfy strict axioms of associativity, unitality and interchange. When all cells are identities past dimension $n$ one refers to an \emph{$n$-category}, recovering the well known instances of a set, category and $2$-category, when $n=0,1,2$. Both $n$-categories for $n\geq0$ and $\omega$-categories are prominent in the literature, and are studied e.g.~in \cite{StreetOrientedSimplexes,SteinerOmegaCatChain,LMW,AraMaltsiniotisJoin}.

Given the strictness of the axioms, examples that occur naturally in mathematical nature (such as various higher categories of cobordisms and spans, and higher Morita categories) do not generally assemble into a strict $\omega$- or $n$-category. These generally form a weak infinite-dimensional category, often referred to as an $(\infty,\infty)$-category, given that the definition of composition operators is only weakly well-defined and the axioms only hold weakly. Nevertheless, developing an understanding for strict $\omega$- and $n$-categories is crucial to tackle the study of weak $(\infty,\infty)$- and $(\infty,n)$-categories:
\begin{itemize}[leftmargin=*, label={>>}]
    \item Strict $\omega$-categories often parameterize operations and interesting quantities in weak higher categories.
    This is the approach taken, e.g., in \cite{rezkTheta,RiehlVerityMonads,HORR,FHM} where strict $\omega$- or $n$-categories are used to parameterize free composites, (homotopy coherent) adjunctions, and pasting diagrams in a weak higher category.
    \item Strict $\omega$-categories provide a first --- and yet non-trivial --- approximation of the theory of weak $(\infty,\infty)$-categories (cf.~\cite{VerityComplicialI,GoldthorpeWeakEnrichment, GoldthorpeSheaves}), and as such they can be used as a playground to better understand the behavior of infinite-dimensional higher categories.
    \item In the theory of polygraphs, strict $\omega$-categories model higher-dimensional rewrite systems, such as those arising from presentations by generators and relations of groups, monoids, and higher algebraic structures (cf.~\cite{PolyBook}).
\end{itemize}

For reasons discussed in \cite{ORSurvey}, it is necessary
to understand which is the intrinsic notion of sameness for two objects inside a given $n$- or $\omega$-category. This is typically expressed by requiring the existence of a $1$-cell between said objects, together with other cells witnessing that the $1$-cell is ``reversible'' in a suitably weak sense. Properties of such notion of sameness, which we refer to as \emph{$\omega$-equivalence} or \emph{$n$-equivalence}, have been studied e.g.~in \cite{ChengOmegaDuals,GurskiBieq,AraLucas,HadzihasanovicDiagrammatic,RiceCoinductive,clingmanThesis,FHM,HenryLoubaton,LoubatonNerves,ORSurvey}.

One can formally identify an $\omega$-category $\omega\cE$ (resp.~$n$-category $(n-1)\cE$) that classifies $\omega$-equivalences (resp.~$(n-1)$-equivalences); cf.~e.g.~\cite[Remark 4.4]{AraLucas}). For instance, for $n=1,2$ we would get, respectively, the walking isomorphism $\cI$ and the walking equivalence $\cE$ considered e.g.~in \cite{lack2}. However, these known candidates are known to lack coherence as soon as $n\geq 2$. 
More precisely, $\omega\cE$ (resp.~$(n-1)\cE$ for $n\geq 2$) is known to not be contractible in the model structure on the category of $\omega$-categories (resp.~$n$-categories) from \cite{LMW}.

As showcased, for instance, in \cite{lack1,lack2,ORNerves2Cat} for the case $n=2$, it is important to have at one's disposal contractible models of the fully coherent $(n-1)$-equivalence. 
When checking whether a $1$-morphism inside a $2$-category is an equivalence, it is sufficient to look at incoherent equivalences. However, if one wants the data witnessing an equivalence to be essentially unique, then this is encoded by a coherent equivalence, as discussed in \cite{lack2}. This principle was also used in \cite{ORNerves2Cat} to enhance the Duskin nerve to a right Quillen functor from $2$-categories to multiply marked simplicial sets. Indeed, an explicit walking coherent equivalence allows for an explicit construction for the localization of a $2$-, or more generally $n$- or even $\omega$-category at a set of cells, by attaching the walking coherent equivalence at each of those cells.

For all $n>0$, we know for abstract reasons (cf.~\cite[\textsection4.7]{LMW})
that there must exist a contractible $\omega$-category (resp.~$n$-category) with two objects, and it is shown in \cite[Proposition~20.4.5]{PolyBook}
that such $\omega$-category (resp.~$n$-category) will automatically classify $\omega$-equivalences (resp.~$(n-1)$-equivalences). Hence, one such $\omega$-category (resp.~$n$-category) deserves to be referred to as a \emph{fully coherent walking $\omega$-equivalence}
(resp.~\emph{fully coherent walking $(n-1)$-equivalence}).
For $n=1$, one can take as a model for the coherent $(n-1)$-equivalence again $\cI$, the usual walking isomorphism, and for $n=2$ one can take $\cE^{\mathrm{adj}}$, the walking adjoint equivalence. For $n=3$, it is likely --- yet unknown --- that the $3$-category $\mathrm{bi}\cE^{\mathrm{adj}}$ (cf. \cite[\textsection2]{GurskiBieq}) is a model for the fully coherent walking $2$-equivalence. No model for the fully coherent $n$-equivalence for $n>2$ or $\omega$-equivalence is known.

In \cite[\textsection1.5]{ORSurvey}, a candidate $\widehat{\omega\cE}$ for the fully coherent walking $\omega$-equivalence, which is a polygraph and of finite type, was introduced by the third- and fourth-named authors. 
Using the theory of marked $\omega$-categories from \cite{HenryLoubaton}, we show in this paper as \cref{omegaEcontractible} that $\widehat{\omega\cE}$ is a contractible $\omega$-category. In particular, it indeed realizes the fully coherent walking $\omega$-equivalence.

\begin{unnumberedtheorem}
The possibly coherent walking $\omega$-equivalence $\widehat{\omega\cE}$ from \cite[Construction~1.5.13]{ORSurvey} is indeed a model for the coherent walking $\omega$-equivalence.
\end{unnumberedtheorem}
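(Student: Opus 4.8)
The plan is to reduce the statement to the \emph{contractibility} of $\widehat{\omega\cE}$ in the model structure on $\omegacat$ of \cite{LMW} --- this is the content of \cref{omegaEcontractible} --- and then to establish that contractibility by importing the construction of \cite[Construction~1.5.13]{ORSurvey} into the framework of marked $\omega$-categories of \cite{HenryLoubaton}. For the reduction, note that $\widehat{\omega\cE}$ is by construction a polygraph of finite type with exactly two $0$-cells, so by \cite[Proposition~20.4.5]{PolyBook} it suffices to show that the canonical $\omega$-functor $\widehat{\omega\cE} \to \ast$ onto the terminal $\omega$-category is a weak equivalence; once this is known, $\widehat{\omega\cE}$ automatically classifies $\omega$-equivalences, and hence it indeed realizes the coherent walking $\omega$-equivalence.

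First I would set up the bridge to marked $\omega$-categories: I equip $\widehat{\omega\cE}$ with the marking $M$ in which all the generating cells of \cite[Construction~1.5.13]{ORSurvey} that serve to witness reversibility are marked, obtaining a marked $\omega$-category $(\widehat{\omega\cE}, M)$. From \cite{HenryLoubaton} I would recall: (i) the model structure on the category of marked $\omega$-categories, together with an explicit generating set of trivial cofibrations built from ``equivalence''- and ``horn''-type inclusions; and (ii) the comparison between the model structure on $\omegacat$ of \cite{LMW} and the marked one, under which weak equivalences of $\omega$-categories are detected on the associated marked $\omega$-categories. By (ii) it is then enough to show that $(\widehat{\omega\cE}, M)$ is weakly equivalent to the terminal marked $\omega$-category.

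The heart of the argument is to recognize the inductive construction of $\widehat{\omega\cE}$ as an iterated pushout of generating trivial cofibrations. Concretely, \cite[Construction~1.5.13]{ORSurvey} exhibits $\widehat{\omega\cE}$ as a sequential colimit $\widehat{\omega\cE} \cong \colim_{k} \widehat{\omega\cE}_{k}$ in which $\widehat{\omega\cE}_{0}$ is (the marked version of) the walking $1$-cell and each stage $\widehat{\omega\cE}_{k} \hookrightarrow \widehat{\omega\cE}_{k+1}$ attaches the cells that invert, up to higher cells, a cell already present. I would check, shape by shape, that the inclusion $\ast \hookrightarrow \widehat{\omega\cE}_{0}$ of an endpoint into the marked interval is a generating trivial cofibration of \cite{HenryLoubaton}, and that every attachment $\widehat{\omega\cE}_{k} \hookrightarrow \widehat{\omega\cE}_{k+1}$ is a pushout of such a generating trivial cofibration along a map classifying the relevant sub-cell. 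Since trivial cofibrations are closed under pushout and transfinite composition, the composite $\ast \hookrightarrow \widehat{\omega\cE}$ is a trivial cofibration --- hence a weak equivalence --- of marked $\omega$-categories; transporting this back along the comparison of (ii), and checking that it is compatible with the colimit, yields that $\widehat{\omega\cE} \to \ast$ is a weak equivalence, i.e. that $\widehat{\omega\cE}$ is contractible.

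The main obstacle I anticipate is precisely the shape-by-shape matching above: one must describe the cells attached at each stage of \cite[Construction~1.5.13]{ORSurvey} explicitly enough to identify them --- with the correct sources, targets, and marking --- with the domains and codomains of the generating trivial cofibrations of \cite{HenryLoubaton}, and to confirm that this process produces exactly the full coherence data and nothing spurious. A secondary, more technical point is the compatibility of the comparison in (ii) with sequential colimits and with the relevant (co)fibrant replacements, so that a weak equivalence of marked $\omega$-categories between $(\widehat{\omega\cE}, M)$ and the point genuinely forces contractibility of the underlying $\omega$-category $\widehat{\omega\cE}$ and not merely of some localization of it.
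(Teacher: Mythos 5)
Your overall strategy is the paper's: reduce via \cite[Proposition~20.4.5]{PolyBook} to showing that $\widehat{\omega\cE}\to\cC_0$ is a weak equivalence in $\omega\cat_{\mathrm{can}}$ (\cref{omegaEcontractible}), then realize the inductive construction inside marked $\omega$-categories as a transfinite composite of acyclic cofibrations starting from a marked interval, and transport back. One caveat on the middle step: the attachments are not literally pushouts of generating trivial cofibrations of \cite{HenryLoubaton}. What is attached at stage $k$ is the suspension of the entire stage $k{-}1$, glued along the suspension of stage $k{-}2$; only the one-dimensional stage $\cC_1^{\sharp}\to(\overline{\cQ},t\overline{\cQ})$ is identified directly with the equation inclusions of \cite{HenryLoubaton}, and acyclicity is then propagated upwards by proving that the marked suspension preserves acyclic cofibrations (\cref{SuspensionHomotopical}) --- itself a nontrivial statement requiring the adjunction $\Sigma\dashv\hom$ and the explicit description of fibrations between fibrant objects. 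Your ``shape-by-shape matching'' would have to be reorganized into this suspension argument; as stated it is not clear it can be carried out cell by cell.

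The genuine gap is the point you defer as ``secondary'': the forgetful functor $U$ detects weak equivalences only between \emph{fibrant} marked $\omega$-categories, i.e., those of the form $\cD^{\natural}$ marked by exactly the $\omega$-equivalences (\cref{thm:model structure on marked omega categories}\ref{MarkFibWE}). Your $(\widehat{\omega\cE},M)$, marked only by the reversibility witnesses, is not fibrant, so a weak equivalence $(\widehat{\omega\cE},M)\simeq\cC_0^{\sharp}$ says nothing directly about $\widehat{\omega\cE}$ in $\omega\cat_{\mathrm{can}}$. To close the argument one must show that $(\widehat{\omega\cE},M)\hookrightarrow\widehat{\omega\cE}^{\natural}$ is a weak equivalence (\cref{lemma:an other important acyclic cofibration}) and, in order to control $\widehat{\omega\cE}^{\natural}$ at all, that \emph{every} positive-dimensional cell of $\widehat{\omega\cE}$ is an $\omega$-equivalence, so that $\widehat{\omega\cE}^{\natural}=\widehat{\omega\cE}^{\sharp}$ (\cref{prop:all_cells_of_omegahat_are_equivalences}). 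This is a substantive combinatorial input --- the theory of (bi-)invertibility sets, the closure of $\omega$-bi-equivalences under composition, identities and inverses, the universal property of $\widehat{\omega\cE}$ as the walking bi-equivalence (\cref{walkingBiinvertibility}), and the coincidence of equivalences with bi-equivalences (\cref{prop:invertible is the same as biinvertible}) --- occupying the entire first half of the paper, and your proposal contains no mechanism for producing it. Without it the transport back to $\omega\cat_{\mathrm{can}}$ does not go through.
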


The intelligent $n$-truncation functor $\inttrunc{n}\colon\omega\cat\to n\cat$
from \cite[\textsection 1.2]{AraMaltsiniotisJoin}
is shown in \cite[\textsection 6]{LMW} to be a left Quillen functor, and as such it preserves categorical equivalences between polygraphs. In particular, as a consequence of the theorem we also obtain that $\inttrunc{n}\widehat{\omega\cE}$ is a contractible $n$-category of finite type, so it realizes the fully coherent walking $(n-1)$-equivalence, for each $n>0$:

\begin{unnumberedcorollary}
Given $n>0$, the intelligent truncation $\inttrunc{n}\widehat{\omega\cE}$ of the possibly coherent walking $\omega$-equivalence is a model for the coherent walking $n$-equivalence.
\end{unnumberedcorollary}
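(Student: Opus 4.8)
The plan is to derive the corollary from the Theorem by transporting the contractibility of $\widehat{\omega\cE}$ along the intelligent truncation. By the Theorem, $\widehat{\omega\cE}$ is contractible, i.e.\ the unique functor $\widehat{\omega\cE}\to\ast$ to the terminal $\omega$-category is a categorical equivalence; moreover both source and target are polygraphs ($\widehat{\omega\cE}$ by \cite[Construction~1.5.13]{ORSurvey}, and $\ast$ since it is freely generated by a single $0$-cell), hence cofibrant in the model structure of \cite{LMW}. Since $\inttrunc{n}\colon\omega\cat\to n\cat$ is left Quillen by \cite[\textsection 6]{LMW}, Ken Brown's lemma guarantees that it preserves categorical equivalences between such objects, so $\inttrunc{n}\widehat{\omega\cE}\to\inttrunc{n}\ast\cong\ast$ is a categorical equivalence of $n$-categories; that is, $\inttrunc{n}\widehat{\omega\cE}$ is a contractible $n$-category.

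Next I would record that $\inttrunc{n}\widehat{\omega\cE}$ has exactly two objects. This is immediate from the construction of the intelligent truncation, which for $n\geq1$ leaves the set of $0$-cells untouched and only alters cells in dimensions $\geq n$, while $\widehat{\omega\cE}$ has precisely the two objects that are the source and target of the walking $\omega$-equivalence. Combining the two observations, $\inttrunc{n}\widehat{\omega\cE}$ is a contractible $n$-category with exactly two objects, and \cite[Proposition~20.4.5]{PolyBook} then yields that any such $n$-category classifies $(n-1)$-equivalences, so it is a model for the coherent walking $n$-equivalence, as claimed.

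Finally, to see that the model is of finite type I would use that $\inttrunc{n}$, being a left adjoint, commutes with the colimits that exhibit $\widehat{\omega\cE}$ as a polygraph obtained by attaching finitely many generating cells in each dimension: the truncation collapses everything strictly above dimension $n$ and leaves finitely many generators in each dimension $\leq n$, so $\inttrunc{n}\widehat{\omega\cE}$ is again an $n$-polygraph of finite type. I do not anticipate a serious obstacle: the whole argument is formal once the Theorem is in hand, the only points deserving a word of care being the cofibrancy of $\ast$, the identification $\inttrunc{n}\ast\cong\ast$, and the preservation of the relevant colimits, all of which are routine. The genuine mathematical content sits entirely in the Theorem proved in the body of the paper.
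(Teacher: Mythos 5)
Your argument is correct and is essentially the one the paper gives in the paragraph preceding the corollary: the intelligent truncation $\inttrunc{n}$ is left Quillen by \cite[\textsection 6]{LMW}, hence (via Ken Brown's lemma) preserves weak equivalences between polygraphs, so $\inttrunc{n}\widehat{\omega\cE}$ is a contractible $n$-category of finite type with two objects, and \cite[Proposition~20.4.5]{PolyBook} then identifies it as the coherent walking equivalence. The points you flag as needing care (cofibrancy of $\cC_0$, $\inttrunc{n}\cC_0\cong\cC_0$, preservation of the generating colimits) are indeed routine and are treated implicitly in the paper as well.
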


\addtocontents{toc}{\protect\setcounter{tocdepth}{1}}
\subsection*{Acknowledgements}

The content of this note benefited from conversations with 
Dimitri Ara, Lennart Meier, Fran\c cois M\'etayer, Samuel Mimram,
and Alex Rice.
This work started during a visit of the first-named author to MPIM Bonn, supported by the Estonian Research Council grant PSG764, and it was completed during a wonderful conference at the University of Utrecht, sponsored by NWO OCENW.KLEIN.364.
The fourth-named author is grateful for support from the National Science Foundation under Grant No. DMS-2203915.

\addtocontents{toc}{\protect\setcounter{tocdepth}{2}}

\section{The model for the coherent \omegat-equivalence}

\subsection{\omegat-categories}

We refer the reader to e.g.~ \cite[\textsection 3.2]{LMW}
for the notion of an \emph{$\omega$-category} and \emph{$\omega$-functor}. Roughly speaking, an \emph{$\omega$-category} $\cD$ consists of a set of \emph{$n$-cells} $\cD_n$ for $n\geq0$, together with domain and codomain operators
$d^+,d^-\colon\cD_n\to\cD_{n-k}$,
composition operators
$*_{n-k}\colon \cD_n\times_{\cD_{n-k}}\cD_n\to\cD_n$,
and identity operators
$\id\colon\cD_{n-k}\to\cD_{n}$ for $0<k\leq n$,
satisfying strictly appropriate associativity, unitality, and interchange axioms. We follow the convention that $g*_{n-k}f$ is defined whenever $d_{n-k}^+f=d_{n-k}^-g$.
An \emph{$\omega$-functor} $F\colon\cD\to\cE$
consists of an assignment $F_n\colon\cD_n\to\cE_n$ that commutes with all relevant operators.

We collect here the $\omega$-categories and constructions of such that will play a role in this paper.
\begin{itemize}[leftmargin=*, label={>>}]
    \item Given $n\geq0$, we denote by $\cC_n$ the \emph{walking $n$-cell}, a.k.a.~ \emph{$n$-disk} and \emph{$n$-globe},
    which is freely generated by an $n$-cell, and we denote by $\partial\cC_n$ its boundary, which is freely generated by two $(n-1)$-cells which have the same domain and codomain. 
    \item Given an $\omega$-category $\cD$, we denote by $\cD^{\circ}$ the \emph{total dual $\omega$-category} of $\cD$, which, roughly speaking, has the same sets of $n$-cells but swaps the domain and codomain operators. This construction is considered e.g.~ in \cite[\textsection 1.8]{AraMaltsiniotisJoin}. 
    \item Given two $\omega$-categories $\cA$ and $\cB$, we denote by $\cA\amalg\cB$ the \emph{disjoint union} of $\cA$ and $\cB$, which is defined as the categorical coproduct in $\omega\cat$ and has the disjoint union of the sets of $n$-cells of $\cA$ and $\cB$ as the set of $n$-cells.
    \item The category $\omega\cat$ is cocomplete (see e.g.~\cite[Corollary 14.2.5]{PolyBook}), and 
    we denote by $\mathrm{colim}_{i\in\cI}\cD_i$ the \emph{colimit} in $\omega\cat$ of a diagram $i\in\cI\mapsto\cD_i$. 
    \item Given an $\omega$-category $\cD$, we denote by $\Sigma\cD$ the \emph{suspension} of $\cD$, which is freely generated by two objects and one $(n+1)$-cell $\Sigma a$ between them for each $n$-cell $a$ of $\cD$. A version of this construction is considered e.g.~ in \cite[\textsection2.2]{ORquillen}.
    \item Given an $\omega$-category $\cD$ and two objects $a$ and $b$, we denote by $\hom_{\cD}(a,b)$ the \emph{hom-$\omega$-category} of $\cD$ from $a$ to $b$, which has one $n$-cell for every $(n+1)$-cell $f$ of $\cD$ for which $d_0^+f=b$ and $d_0^-f=a$.
\end{itemize}

The existence of the following adjunction can be checked by direct inspection (cf. \cite[\textsection B.6.5]{AraMaltsiniotisJoin}). The preservation of connected colimits can be deduced using a standard argument based on \cite[Proposition 2.9]{HirschhornOvercategories}.

\begin{prop}
\label{AdjointOfSuspension}
If $\omega\cat_{*,*}$ denotes the category of bipointed marked $\infty$-categories and bipointed $\omega$-functors,
there is an adjunction
\[\Sigma\colon\omega\cat\rightleftarrows\omega\cat_{*,*} :\!\hom\]
Moreover, the functor $\Sigma\colon\omega\cat\to\omega\cat$ preserves connected colimits.
\end{prop}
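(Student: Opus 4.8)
\textbf{Proof plan for \cref{AdjointOfSuspension}.}

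The plan is to exhibit the suspension $\Sigma$ and the hom-construction as an explicit adjoint pair, then deduce preservation of connected colimits from a general overcategory argument. First I would pin down the category $\omega\cat_{*,*}$: an object is an $\omega$-category $\cD$ together with a chosen ordered pair of objects $a, b \in \cD_0$ (equivalently, an $\omega$-functor $\partial\cC_1 \to \cD$ picking out source and target), and a morphism is an $\omega$-functor commuting with the two chosen basepoints; thus $\omega\cat_{*,*}$ is the coslice (double-pointed) category $\partial\cC_1 \downarrow \omega\cat$, or rather its variant where we remember the two points separately rather than their common boundary. Next I would observe that $\Sigma\cD$ is canonically bipointed by its two generating objects, so $\Sigma$ does land in $\omega\cat_{*,*}$, and that for a bipointed $(\cE, a, b)$ the hom-$\omega$-category $\hom_\cE(a,b)$ is well defined by the last bullet of the preceding list. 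The adjunction isomorphism
\[
\omega\cat_{*,*}(\Sigma\cD, (\cE,a,b)) \;\cong\; \omega\cat(\cD, \hom_\cE(a,b))
\]
should be checked first on generators and then extended: a bipointed $\omega$-functor $\Sigma\cD \to \cE$ must send the two objects of $\Sigma\cD$ to $a$ and $b$, and is then determined by where it sends each generating cell $\Sigma x$ for $x \in \cD_n$ — namely to an $(n+1)$-cell of $\cE$ from $a$ to $b$, i.e.\ an $n$-cell of $\hom_\cE(a,b)$ — compatibly with all source/target/composition/identity operators, which is exactly the data of an $\omega$-functor $\cD \to \hom_\cE(a,b)$. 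Naturality in both variables is then a routine check. This is essentially the verification pointed to in \cite[\textsection B.6.5]{AraMaltsiniotisJoin}, and I would either cite it or spell out the generator-level bijection.

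For the second assertion, I would note that a left adjoint always preserves all colimits, so $\Sigma\colon\omega\cat\to\omega\cat_{*,*}$ preserves colimits; the only subtlety is comparing colimits computed in $\omega\cat_{*,*}$ with those computed in $\omega\cat$ along the forgetful functor $U\colon\omega\cat_{*,*}\to\omega\cat$. Since $\omega\cat_{*,*}$ is (a mild variant of) an overcategory, or rather a double coslice, $U$ creates connected colimits: this is the content of the standard argument using \cite[Proposition 2.9]{HirschhornOvercategories}, which says that for a connected diagram in a coslice category the underlying colimit in the base, equipped with its induced structure map, is again a colimit. Because the composite $U \circ \Sigma \colon \omega\cat \to \omega\cat$ is the endofunctor also written $\Sigma$, and because $U$ preserves connected colimits while $\Sigma$ preserves all of them, the composite $\Sigma\colon\omega\cat\to\omega\cat$ preserves connected colimits. (It cannot preserve the initial object or more general non-connected colimits, since $\Sigma\varnothing$ is the free-living pair of objects, not $\varnothing$, which is consistent with only claiming connected colimits.)

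The main obstacle I anticipate is bookkeeping rather than conceptual: making precise the category $\omega\cat_{*,*}$ and the forgetful functor so that Hirschhorn's proposition applies cleanly, since "bipointed" could be read either as a coslice under the discrete two-object $\omega$-category $\cC_0 \amalg \cC_0$ or under $\partial\cC_1$, and one must confirm that in either reading the relevant forgetful functor creates connected colimits (it does, by the same argument, since both $\cC_0\amalg\cC_0$ and $\partial\cC_1$ are fixed objects over which we are slicing). A secondary point of care is checking that the hom-$\omega$-category construction is functorial in bipointed maps and that the unit and counit of the adjunction are themselves bipointed $\omega$-functors; both reduce to unwinding the generator-level description above.
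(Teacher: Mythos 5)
Your proposal is correct and matches the paper's (very terse) argument: the paper simply asserts the adjunction "can be checked by direct inspection (cf.\ \cite[\textsection B.6.5]{AraMaltsiniotisJoin})" and that preservation of connected colimits follows from "a standard argument based on \cite[Proposition 2.9]{HirschhornOvercategories}", which is precisely the generator-level bijection and the coslice/forgetful-functor argument you spell out. Your parenthetical observation that $\Sigma\varnothing\neq\varnothing$ correctly explains why only connected colimits are claimed.
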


\subsection{Equivalences and bi-equivalences in an \omegat-category}

The following is originally due to M\'etayer, and is also considered in \cite[\textsection1.2]{AraLucas} (under the terminology of \emph{structure of reversibility}) and \cite[D\'efinition 1.1.7]{LoubatonNerves} (under the terminology of \emph{ensemble d'inversibilit\'e}).

\begin{defn}
\label{InvertibilitySet}
Let $\cD$ be an $\omega$-category. An \emph{invertibility set}
in $\cD$ is a set $E= \coprod_{n>0}E_n$ with $E_n\subseteq\cD_n$ 
such that, 
for all $n>0$ and $a\in E_n$, there exists $\tilde{a}\in E_n$
of the form
\[
\tilde a\colon d^+_{n-1}a\to d^-_{n-1}a
\]
and $c,c'\in E_{n+1}$ of the form
\[
c \colon \tilde{a}\underset{n-1}*a \to  \id_{d^-_{n-1} a} \quad\text{ and }\quad
c'\colon   a\underset{n-1}*\tilde{a} \to \id_{d^+_{n-1} a}.
\]
In the situation above we say that $\tilde a$ is a \emph{weak inverse} for $a$.
\end{defn}

\begin{defn}
\label{Equivalence}
Let $\cD$ be an $\omega$-category and $n>0$. Given $a\in\cD_n$, the $n$-cell $a$ is said to be an \emph{$\omega$-equivalence}
if there exists an invertibility set $E$ such that $a\in E$.
We denote by $\eq_n\cD$ the set of
all $n$-cells in $\cD$ that are $\omega$-equivalences
and by $\eq\cD\coloneqq\coprod_{n>0}\eq_{n}\cD$ the set of all $\omega$-equivalences in $\cD$.
\end{defn}

The following is from \cite[\textsection1.2]{AraLucas} and \cite[Lemme~1.1.8]{LoubatonNerves},
and is generally taken as the defining property for the set $\eq\cD$ of $\omega$-equivalences in an $\omega$-category $\cD$ (see e.g.~\cite[Definition~6]{LMW}).

\begin{prop}\label{CoinductiveDefinitionEquivalences}
Let $\cD$ be an $\omega$-category and $n>0$. Given $a\in\cD_n$, we have that $a\in\eq_n\cD$ if and only if there exist $\tilde a\in\cD_n$
of the form
\[
\tilde a:d^+_{n-1}a\to d^-_{n-1}a
\]
and $c,c'\in\eq_{n+1}\cD$ of the form
\[
c \colon \tilde{a}\underset{n-1}*a \to  \id_{d^-_{n-1} a} \quad\text{ and }\quad
c'\colon   a\underset{n-1}*\tilde{a} \to \id_{d^+_{n-1} a}.
\]
\end{prop}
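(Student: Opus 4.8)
The plan is to prove both implications directly from the definitions in \cref{InvertibilitySet,Equivalence}, without invoking any general coinduction principle.

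For the ``only if'' direction, I would start from the hypothesis $a\in\eq_n\cD$, which by \cref{Equivalence} means precisely that $a$ belongs to some invertibility set $E$. The defining clause of \cref{InvertibilitySet}, applied to $a\in E_n$, then hands us a weak inverse $\tilde a\in E_n$ of the form $\tilde a\colon d^+_{n-1}a\to d^-_{n-1}a$ together with cells $c,c'\in E_{n+1}$ of exactly the two shapes appearing in the statement. Since $c$ and $c'$ lie in the invertibility set $E$, \cref{Equivalence} gives $c,c'\in\eq_{n+1}\cD$, which is all that is needed. I expect this direction to be immediate.

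For the converse, suppose we are given $\tilde a\in\cD_n$ of the stated form and $c,c'\in\eq_{n+1}\cD$ of the stated form. Using \cref{Equivalence}, I would choose invertibility sets $E$ and $E'$ with $c\in E$ and $c'\in E'$, and then set
\[
\bar E\coloneqq (E\cup E')\cup\{a,\tilde a\},
\]
where $a$ and $\tilde a$ are adjoined in dimension $n$. The task is then to check that $\bar E$ is an invertibility set; once this is done, $a\in\bar E$ yields $a\in\eq_n\cD$ via \cref{Equivalence}. Verifying the defining clause splits into cases: any cell already lying in $E$ or in $E'$ finds its weak inverse and its two witnesses inside $E$, respectively $E'$ — in particular a union of invertibility sets still satisfies the clause for its ``old'' elements; for $a$ the cell $\tilde a$ is a weak inverse of the correct form and the two required witnesses are exactly $c\in E\subseteq\bar E$ and $c'\in E'\subseteq\bar E$; and for $\tilde a$ the cell $a$ serves as a weak inverse, with the two required witnesses furnished — in the opposite order — by $c'$ and $c$, after identifying $d^\pm_{n-1}\tilde a$ with $d^\mp_{n-1}a$.

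The only point that is not pure bookkeeping of sources and targets is the observation that the same pair $(c,c')$ simultaneously witnesses that $\tilde a$ is a weak inverse of $a$ and that $a$ is a weak inverse of $\tilde a$; this is what makes the finite set $\{a,\tilde a\}$, together with the already-closed sets $E$ and $E'$, self-supporting as an invertibility set. I do not anticipate any genuine obstacle beyond getting these domain/codomain matchings right.
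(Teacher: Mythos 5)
Your proof is correct. The paper itself gives no proof of this proposition (it is quoted from the cited references), but your argument is precisely the strategy the paper does spell out for the bi-equivalence analogue, \cref{CoinductiveDefinitionBiequivalences}: the forward direction simply unwinds \cref{Equivalence} and \cref{InvertibilitySet}, and the converse assembles a new invertibility set out of $a$ together with invertibility sets containing $c$ and $c'$. The one point where the two statements genuinely differ is that \cref{InvertibilitySet} requires the weak inverse $\tilde a$ to lie in the set $E_n$ itself, whereas \cref{BiInvertibilitySet} only asks $a^L,a^R\in\cD_n$; consequently the paper's analogous proof can take $E''=\{a\}\cup E\cup E'$, while you must also adjoin $\tilde a$ and verify the defining clause for it. You handle this correctly: $a$ has the right shape to be a weak inverse for $\tilde a$, and the same pair $(c,c')$, taken in the opposite order, supplies the two required witnesses, so the enlarged set is self-supporting. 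I see no gap.
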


\begin{rmk}
\label{InvertibilitySetOfEquivalences}
Given an $\omega$-category $\cD$, by \cref{CoinductiveDefinitionEquivalences} the set $\eq\cD$ is the maximal invertibility set in $\cD$.
\end{rmk}

\begin{defn}
\label{BiInvertibilitySet}
Let $\cD$ be an $\omega$-category.
A \emph{bi-invertibility set} in $\cD$ is a set $E= \coprod_{n>0}E_n$ with $E_n\subseteq\cD_n$ 
such that,
for all $n>0$ and $a\in E_n$, there exist $ a^L, a^R\in\cD_{n}$ of the form
\[ a^L, a^R\colon d^+_{n-1}a\to d^-_{n-1}a\]
and $c,c'\in E_{n+1}$ of the form
\[c \colon   a^L\underset{n-1}*a\to \id_{d^-_{n-1} a} \quad\text{ and }\quad
c' \colon   a\underset{n-1}* a^R \to \id_{d^+_{n-1} a}.\]
In the situation above, we say that $a^L$, resp. $a^R$, is a \emph{left inverse}, resp. \emph{right inverse}, for $a$.
\end{defn}

\begin{defn}
\label{BiEquivalence}
Given an $\omega$-category $\cD$ and $a\in\cD_n$ with $n>0$, the $n$-cell $a$ is said to be an \emph{$\omega$-bi-equivalence}
if there exists a bi-invertibility set $E$ such that $a\in E$.
We denote by $\bieq_n\cD$ the set of
all $n$-cells in $\cD$ that are $\omega$-bi-equivalences
and by $\bieq\cD\coloneqq\coprod_{n>0}\bieq_n\cD$ the set of all $\omega$-bi-equivalences in $\cD$.
\end{defn}

\begin{rmk}
\label{InvertibilitySetIsBiInvertibilitySet}
If $E$ is an invertibility set in an $\omega$-category $\cD$, then $E$ is also a bi-invertibility set in $\cD$.
\end{rmk}

The following is often taken as the defining property for the set $\bieq\cD$ of $\omega$-bi-equivalences in an $\omega$-category $\cD$ (cf.~in \cite[Definition~4]{RiceCoinductive}).

\begin{prop}
\label{CoinductiveDefinitionBiequivalences}
Let $\cD$ be an $\omega$-category and $n>0$. Given $a\in\cD_n$, we have that $a\in\bieq_n\cD$ if and only if there exist $ a^L, a^R\in\cD_n$ of the form
\[
 a^L, a^R:d^+_{n-1}a\to d^-_{n-1}a
\]
and $c,c'\in\bieq_{n+1}\cD$ of the form
\[c \colon   a^L\underset{n-1}*a\to \id_{d^-_{n-1} a} \quad\text{ and }\quad
c' \colon   a\underset{n-1}* a^R \to \id_{d^+_{n-1} a}.\]
\end{prop}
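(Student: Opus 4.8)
The plan is to adapt the proof of the corresponding statement \cref{CoinductiveDefinitionEquivalences} for $\omega$-equivalences, systematically replacing ``invertibility set'' by ``bi-invertibility set''. The one structural ingredient I would record first is the bi-equivalence analogue of \cref{InvertibilitySetOfEquivalences}: an arbitrary union of bi-invertibility sets in $\cD$ is again a bi-invertibility set, so that $\bieq\cD$, being the union of all of them, is the \emph{maximal} bi-invertibility set in $\cD$. This is immediate from the fact that the condition in \cref{BiInvertibilitySet} is checked cell by cell --- for $a \in E_n$ one only requires that \emph{some} $a^L, a^R \in \cD_n$ and \emph{some} witnesses $c, c' \in E_{n+1}$ exist --- so enlarging $E$ cannot destroy the property. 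The same observation shows that adjoining to a bi-invertibility set a single cell whose required inverses lie in $\cD$ and whose required witnesses already lie in the enlarged set again yields a bi-invertibility set.

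For the forward implication, assume $a \in \bieq_n\cD$. Since $\bieq\cD$ is a bi-invertibility set containing $a$, \cref{BiInvertibilitySet} supplies $a^L, a^R \in \cD_n$ together with witnesses $c, c' \in (\bieq\cD)_{n+1} = \bieq_{n+1}\cD$ of exactly the stated form.

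For the converse, suppose that $a^L, a^R \in \cD_n$ and $c, c' \in \bieq_{n+1}\cD$ of the stated form are given, and let $E'$ be obtained from $\bieq\cD$ by adjoining $a$ in degree $n$. By the observation above, $E'$ is a bi-invertibility set: the cell $a$ is witnessed by the given $a^L, a^R, c, c'$, noting that $c, c' \in (\bieq\cD)_{n+1} \subseteq E'_{n+1}$, while every other cell of $E'$ inherits its witnesses from $\bieq\cD \subseteq E'$. Maximality of $\bieq\cD$ then forces $E' \subseteq \bieq\cD$, whence $a \in \bieq_n\cD$.

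I do not anticipate a genuine obstacle here: the argument is purely formal, and \cref{InvertibilitySetIsBiInvertibilitySet} already indicates that the bi-invertible notion behaves as a weakening of the invertible one, so the equivalence-case proof transfers without new ideas. The only points that warrant a sentence of care are the degree bookkeeping when $a$ is adjoined --- it lives in degree $n$ whereas its witnesses $c, c'$ live in degree $n+1$, so there is no clash --- and the verification that unions of bi-invertibility sets are bi-invertibility sets, both of which follow at once from the local, cell-by-cell character of \cref{BiInvertibilitySet}.
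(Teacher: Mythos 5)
Your proof is correct and follows essentially the same route as the paper's: the forward direction reads the witnesses off a bi-invertibility set containing $a$, and the converse adjoins $a$ to a bi-invertibility set already containing $c$ and $c'$. The only cosmetic difference is that you pass through the maximal bi-invertibility set $\bieq\cD$ (justified by your union observation), whereas the paper simply forms $\{a\}\cup E\cup E'$ from bi-invertibility sets $E$, $E'$ containing $c$ and $c'$ respectively.
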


\begin{proof}
For the forward direction, we suppose that $a\in\bieq\cD$. By \cref{BiEquivalence} there exists a bi-invertibility set $E$ containing $a$, and by \cref{BiInvertibilitySet} there exist $ a^L, a^R\in \cD_n$,
and $c,c'\in E_{n+1}$ of the form displayed in \cref{BiInvertibilitySet}. Since $c,c'\in E$, by \cref{BiEquivalence} it follows that $c,c'\in\bieq_n\cD$, as desired.

For the converse direction, suppose that for a given $a\in\cD_n$ there exist $ a^L, a^R\in \cD_n$, $c,c'\in\bieq_{n+1}\cD$ satisfying the conditions of the statement. By \cref{BiEquivalence} there exist bi-invertibility sets $E$ and $E'$ in $\cD$ containing $c$ and $c'$, respectively. Then $E''\coloneqq\{a\}\cup E\cup E'$ is by \cref{BiInvertibilitySet} an invertibility set containing $a$. By \cref{BiEquivalence}, it follows that $a\in\bieq_n\cD$, as desired.
\end{proof}

We now establish some closure properties of the set of biequivalences in an $\omega$-category $\cD$, which are essentially the content of \cite[Theorem 13]{RiceCoinductive}.

\begin{lem}
\label{BiInvertibilitySetOfIdentities}
Let $\cD$ be an $\omega$-category. If we denote
\[
\id_n\cD\coloneqq\{\id_{a}\in\cD_{n}\ |\ a\in\cD_{n-k},\ k>0\},
\]
the set $\id\cD\coloneqq\coprod_{n>0}\id\cD$ is a bi-invertibility set.
\end{lem}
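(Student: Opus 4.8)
The plan is to verify directly that $\id\cD$ satisfies the defining conditions of a bi-invertibility set from \cref{BiInvertibilitySet}. The key observation is that identities are their own inverses: if $a = \id_x$ for some $(n-k)$-cell $x$ with $k>0$, then $d^+_{n-1}a = d^-_{n-1}a$ (both equal $\id_{n-1}x$, interpreting appropriately when $k=1$), so we may set $a^L = a^R = a = \id_x$. We then need to exhibit the witnessing $(n+1)$-cells $c$ and $c'$. Since $a\underset{n-1}*a$ is a composite of an identity with an identity, the strict unitality axiom gives $a\underset{n-1}*a = a = \id_{d^-_{n-1}a} = \id_{d^+_{n-1}a}$, and hence we may take $c = c' = \id_{a}$, which is again an identity cell of dimension $n+1$ on an $(n-k)$-cell (with $k>0$), so it lies in $\id_{n+1}\cD$. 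This shows the required data all live in $\id\cD$, so $\id\cD$ is a bi-invertibility set.

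Concretely, I would first unwind the notation: fix $n>0$ and $a\in\id_n\cD$, so $a = \id_x$ for some $x\in\cD_{n-k}$ with $0<k\leq n$. Second, I would record that $d^+_{n-1}\id_x = d^-_{n-1}\id_x$; this is immediate from the compatibility of the globular operators with identities (when $k\geq 2$ this domain/codomain is itself the iterated identity $\id_x$ viewed in dimension $n-1$, and when $k=1$ it is $x$ itself). Third, set $a^L = a^R = a$; these trivially have the form $d^+_{n-1}a\to d^-_{n-1}a$. Fourth, compute the composites: $a^L\underset{n-1}*a = \id_{d^-_{n-1}a}\underset{n-1}*\id_{d^-_{n-1}a} = \id_{d^-_{n-1}a}$ by strict unitality (a composite of identities along a common boundary is that identity), and symmetrically for $a\underset{n-1}*a^R$. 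Fifth, take $c = c' = \id_a$; since $a = \id_x$ we have $\id_a = \id_{\id_x}\in\id_{n+1}\cD$, and $\id_a$ has the form $\id_{d^-_{n-1}a}\to\id_{d^-_{n-1}a}$ as required (with both sides of the correct type after the computation in step four). Then all of $a^L, a^R, c, c'$ belong to $\id\cD$, so the conditions of \cref{BiInvertibilitySet} are met.

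The only mildly delicate point — not really an obstacle, but the one place where care is needed — is bookkeeping the globular identities $d^{\pm}_{n-1}(\id_x) = \id_x$ or $x$ depending on whether $k>1$ or $k=1$, and making sure that the composite $\id_{d^-_{n-1}a}\underset{n-1}*\id_{d^-_{n-1}a}$ is literally well-defined and equal to $\id_{d^-_{n-1}a}$ by the strict axioms; this follows from the interplay of the unitality axiom with the identity operators as recalled in \textsection1.1. Everything else is a direct substitution, and no induction or appeal to the coinductive characterizations is needed — unlike for $\eq\cD$ or $\bieq\cD$, the set $\id\cD$ is closed "on the nose" because taking an identity strictly raises dimension and the witnesses are again identities. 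Hence the lemma follows by the verification above.
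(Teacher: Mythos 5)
Your verification is correct and is exactly the direct check from \cref{BiInvertibilitySet} that the paper declares ``straightforward'' without writing out: take $a^L=a^R=a$, use strict unitality to identify $a^L\underset{n-1}*a$ with $\id_{d^-_{n-1}a}$, and take $c=c'=\id_a\in\id_{n+1}\cD$. The bookkeeping of $d^{\pm}_{n-1}\id_x$ in the cases $k=1$ and $k>1$ is handled correctly, so there is nothing to add.
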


\begin{proof}
This is straightforward from \cref{BiInvertibilitySet}.
\end{proof}

\begin{prop}
\label{IdentityIsBiequivalence}
Let $\cD$ be an $\omega$-category and $n\geq0$. Given $a\in\cD_n$, we have that $\id_a\in\bieq_{n+1}\cD$.
\end{prop}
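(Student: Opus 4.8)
The plan is to deduce this immediately from the lemma just proved. By \cref{BiInvertibilitySetOfIdentities}, the set $\id\cD=\coprod_{m>0}\id_m\cD$ is a bi-invertibility set in $\cD$. Given $a\in\cD_n$ with $n\geq0$, the identity $\id_a$ is a cell of $\cD_{n+1}$, and taking $k=1$ in the defining description $\id_{n+1}\cD=\{\id_b\in\cD_{n+1}\mid b\in\cD_{(n+1)-k},\ k>0\}$ we see that $\id_a\in\id_{n+1}\cD\subseteq\id\cD$. Thus $\id_a$ belongs to a bi-invertibility set, and \cref{BiEquivalence} yields $\id_a\in\bieq_{n+1}\cD$, as claimed.

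Alternatively — and this is essentially the same argument unwound by one step — one can check the coinductive criterion of \cref{CoinductiveDefinitionBiequivalences} by hand: take $(\id_a)^L=(\id_a)^R=\id_a$, which has the correct boundary since $d^\pm_n\id_a=a$, and take the required witnesses $c\colon(\id_a)^L\underset{n}*\id_a\to\id_{d^-_n\id_a}$ and $c'\colon\id_a\underset{n}*(\id_a)^R\to\id_{d^+_n\id_a}$ both to be $\id_{\id_a}$. This is well-typed because the degenerate composite $\id_a\underset{n}*\id_a$ equals $\id_a$, which in turn equals $\id_{d^\pm_n\id_a}$, by the strict unitality axioms of an $\omega$-category. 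One then needs these higher identities $\id_{\id_a}$ to themselves be bi-equivalences, which is exactly the content already packaged into \cref{BiInvertibilitySetOfIdentities}, so the first route is cleaner.

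There is no genuine obstacle in this statement; the only points requiring a little care are the dimension bookkeeping and the $k$-indexing in the definition of $\id_{n+1}\cD$, together with the observation that the strictness of the unit and interchange axioms forces $\id_a\underset{n}*\id_a=\id_a$, so that the would-be coherence cells are literally identities.
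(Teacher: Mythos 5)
Your first route is exactly the paper's own proof: appeal to \cref{BiInvertibilitySetOfIdentities} to exhibit a bi-invertibility set containing $\id_a$ and conclude via \cref{BiEquivalence}. The proposal is correct, and the alternative coinductive check you sketch is just the same argument unwound, as you note yourself.
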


\begin{proof}
A bi-invertibility set in the sense of \cref{BiInvertibilitySet} containing $\id_a$ is constructed in \cref{BiInvertibilitySetOfIdentities}. It follows from \cref{BiEquivalence} that $\id_a\in\bieq_{n+1}\cD$, as desired.
\end{proof}

\begin{lem}
\label{InvertibilitySetOfHorizontalComposites}
Let $\cD$ be an $\omega$-category. If we denote
\[
E_n\coloneqq\{b*_{k}a\ |\ a,b\in\bieq_n\cD,\ 0\leq k<n-1\},
\]
the set $E\coloneqq\coprod_{n>0}E_n$ is a bi-invertibility set.
\end{lem}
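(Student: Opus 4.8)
The plan is to exhibit, for each pair of composable bi-equivalences $a,b\in\bieq_n\cD$ with $0\leq k<n-1$, explicit left and right inverses of $b*_k a$ together with witnessing $(n+1)$-cells, and then to check that the collection of all such witnesses together with $E$ itself is closed in the sense of \cref{BiInvertibilitySet}. Concretely, I would first invoke \cref{CoinductiveDefinitionBiequivalences} to pick $a^L,a^R$ and $b^L,b^R$ together with witnessing $(n+1)$-cells $c_a,c_a',c_b,c_b'\in\bieq_{n+1}\cD$, and then take $(b*_k a)^L\coloneqq a^L *_k b^L$ and $(b*_k a)^R\coloneqq a^R *_k b^R$. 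The point is that, modulo the interchange law, $(a^L*_k b^L)*_{n-1}(b*_k a)$ is a composite built from $b^L*_{n-1}b$ and $a^L*_{n-1}a$ (sandwiched appropriately), which are in turn connected to identities by $c_b$ and $c_a$; unitality and interchange then let us assemble an $(n+1)$-cell $c\colon (a^L*_k b^L)*_{n-1}(b*_k a)\to\id_{d^-_{n-1}(b*_k a)}$, and symmetrically a cell $c'$ on the right.

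The subtlety is that the witnessing $(n+1)$-cell $c$ is not literally $c_b *_k c_a$ on the nose: one has to compose $c_a$ and $c_b$ with suitable whiskerings and then use associativity/unitality to identify the source with $(a^L*_k b^L)*_{n-1}(b*_k a)$ and the target with the relevant identity $(n-1)$-cell's identity. So the actual recipe for $c$ is a $*_k$-composite (followed by a $*_{n-1}$-rebracketing) of $*_\ell$-whiskerings of $c_a,c_b$ for appropriate $\ell$. Since $c_a,c_b\in\bieq_{n+1}\cD$ and $\bieq_{n+1}\cD$ is closed under identities (\cref{IdentityIsBiequivalence}) and — once we have proved the present lemma at level $n+1$ — under the $*_k$-composites in question, one is tempted to conclude $c\in\bieq_{n+1}\cD$ directly; but that is circular. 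The clean way out is to set up the bi-invertibility set to include all the higher witnesses at once: define
\[
E_m\coloneqq\{\,b*_k a\ |\ a,b\in\bieq_m\cD,\ 0\leq k<m-1\,\}
\]
(which for $m=n$ is exactly the set in the statement), and argue by an induction, internal to the proof, that $E\coloneqq\coprod_{m>0}E_m$ satisfies the closure clause of \cref{BiInvertibilitySet}: given $b*_k a\in E_m$, the witnesses $c,c'$ constructed above are, after whiskering with identities, themselves $*_k$-composites of elements of $\bieq_{m+1}\cD$, hence lie in $E_{m+1}$ provided we also know the $*_k$-whiskering by an identity of a bi-equivalence is a bi-equivalence — which is the $k'$-fold special case handled by combining \cref{IdentityIsBiequivalence} with the composite. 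In other words, $E$ is shown to be a bi-invertibility set by a single coinductive/bookkeeping argument rather than by first knowing $\bieq\cD$ is closed under composition.

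The main obstacle, and where I would spend the most care, is the combinatorial verification that the source and target of the constructed $c$ and $c'$ are \emph{exactly} $(a^L*_k b^L)*_{n-1}(b*_k a)$, $\id_{d^-_{n-1}(b*_k a)}$ and their right-handed analogues — this requires a careful application of the interchange axiom relating $*_k$ and $*_{n-1}$, together with associativity and unitality, and it is easy to misplace a whiskering or a bracket. I would organize this by first recording the identities $d^-_{n-1}(b*_k a)=(d^-_{n-1}b)*_k(d^-_{n-1}a)$ and $\id_{(d^-_{n-1}b)*_k(d^-_{n-1}a)}=\id_{d^-_{n-1}b}*_k\id_{d^-_{n-1}a}$, then computing $(a^L*_k b^L)*_{n-1}(b*_k a)=(a^L*_{n-1}\id)*_k(b^L*_{n-1}\id)\;*_{n-1}\;(\id*_{n-1}b)*_k(\id*_{n-1}a)$ — no wait, cleanly: $=(a^L*_k b^L)*_{n-1}(b*_k a)=(a^L*_{n-1}b)*_k(b^L*_{n-1}a)$ is \emph{false} in general, so one instead whiskers: $b*_k a=(\id_{d^-_{n-1}b}*_k a)*_{n-1}(b*_k \id_{d^+_{n-1}a})$ and dually for $a^L*_k b^L$, and then applies $c_a$ whiskered by $\id_{d^-_{n-1}b}$ (on the appropriate side, with the appropriate whiskering operation) and $c_b$ whiskered by $\id_{d^-_{n-1}a}$, composing the results with $*_{n-1}$. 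Keeping the whiskering operations and the order of $*_{n-1}$-composition straight here is the crux; everything else (the closure bookkeeping, the appeal to \cref{IdentityIsBiequivalence}, and the final invocation of \cref{BiInvertibilitySet}) is routine.
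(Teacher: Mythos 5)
Your overall strategy is the paper's: define $E_m$ uniformly in all dimensions $m$, extract witnesses for $a$ and $b$ via \cref{CoinductiveDefinitionBiequivalences}, take $*_k$-composites as candidate inverses, and verify the closure clause of \cref{BiInvertibilitySet} by showing the new witnesses land in $E_{m+1}$ rather than in $\bieq_{m+1}\cD$ (thus avoiding the circularity you correctly flag). However, there is a concrete error at the heart of the construction: you set $(b*_k a)^L\coloneqq a^L*_k b^L$, but since $k<n-1$ the $k$-boundaries of $a^L$ agree with those of $a$ (and likewise for $b^L$ and $b$), so $a^L*_k b^L$ is composable only when $d_k^+b=d_k^-a$, which fails in general (e.g.\ $n=2$, $k=0$, with $a$ and $b$ living over distinct composable $0$-arrows). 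The correct choice is $(b*_k a)^L\coloneqq b^L*_k a^L$ --- no order reversal at level $k$, because the inversion happens at level $n-1$, not at level $k$.

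This wrong order is also what leads you astray in the ``crux'' paragraph. With the correct choice, a single application of interchange gives, on the nose,
\[
(b^L*_k a^L)\underset{n-1}*(b*_k a)=(b^L\underset{n-1}*b)*_k(a^L\underset{n-1}*a)
\quad\text{and}\quad
\id_{d^-_{n-1}b}*_k\id_{d^-_{n-1}a}=\id_{d^-_{n-1}(b*_k a)},
\]
so the witness is \emph{literally} $c_b*_k c_a$: no whiskering, rebracketing, or unitality juggling is needed, and $c_b*_k c_a\in E_{n+1}$ immediately since $c_a,c_b\in\bieq_{n+1}\cD$ and $k<n=(n+1)-1$. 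This is exactly the paper's proof. Your proposed detour through $*_\ell$-whiskerings is not only unnecessary but dangerous: the resulting witness would be an $\underset{n-1}*$- or $\underset{n}*$-composite of whiskered cells, and for that composite to lie in $E_{n+1}$ you would need its factors to lie in $\bieq_{n+1}\cD$ --- but the whiskered factors are themselves only known to lie in $E_{n+1}$, which reintroduces precisely the circularity you were trying to engineer around. Fix the order of the inverses and drop the whiskering machinery, and the argument closes in two lines.
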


\begin{proof}
 Given $e\coloneqq b*_ka\in E_n$, by \cref{CoinductiveDefinitionBiequivalences} there exist $ a^L, a^R, b^L, b^R\in\cD_{n}$, $c,c',d,d'\in \bieq_{n+1}\cD$
of the form 
\[c \colon    a^L\underset{n-1}*a \to \id_{d^-_{n-1} a}\quad\text{ and }\quad c' \colon  a\underset{n-1}* a^R\to \id_{d^+_{n-1} a},\]
\[
d \colon   b^L\underset{n-1}*b\to \id_{d^-_{n-1} b}  \quad\text{ and }\quad d' \colon b\underset{n-1}* b^R \to \id_{d^+_{n-1} b} .
\]
We then define $ e^R\coloneqq  b^R*_k  a^R\in \cD_{n}$ and $ e^L\coloneqq  b^L*_k  a^L\in \cD_{n}$,
and we set $\ell\in \cD_{n+1}$ and $\ell'\in \cD_{n+1}$ to be the composites 
\[\quad \ell\coloneqq d\underset{k}* c\colon    e^L\underset{n-1}*e\to \id_{d^-_{n-1} e}\quad\text{ and }\quad \ell'\coloneqq d'\underset{k}* c'\colon  e\underset{n-1}* e^R \to \id_{d^+_{n-1} e}.\]
These composites do make sense because various relations, such as an instance of the interchange law
\[e*_{n-1}e^R=(b*_k a)*_{n-1}(b^R *_k a^R)=(b*_{n-1}b^R)*_k (a*_{n-1}a^R)\]
hold.
By definition
we see that $\ell\in E_{n+1}$ and $\ell'\in E_{n+1}$, so it follows that $E$ is a bi-invertibility set containing $e$, as desired.
\end{proof}

\begin{prop}
\label{HorizontalCompositeOfBiequivalenceIsBiequivalence}
Let $\cD$ be an $\omega$-category and $0\leq k<n-1$. 
Given $a,b\in\bieq_n\cD$ such that $b*_{k}a$ is defined,
we have that $b*_{k}a\in\bieq_n\cD$.
\end{prop}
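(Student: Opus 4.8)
The plan is to observe that this proposition is essentially a direct corollary of \cref{InvertibilitySetOfHorizontalComposites}, once we unwind the definitions. The heavy lifting — verifying that horizontal composites of candidate left/right inverses are again candidate left/right inverses, and that the interchange law makes the relevant composite witnesses well-typed — has already been done in the proof of that lemma. So the remaining work is purely to invoke the correct definition.

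Concretely, I would argue as follows. Fix $0 \le k < n-1$ and $a, b \in \bieq_n\cD$ such that $b *_k a$ is defined. Consider the set $E = \coprod_{m>0} E_m$ from \cref{InvertibilitySetOfHorizontalComposites}, where $E_m = \{\, b' *_j a' \mid a', b' \in \bieq_m\cD,\ 0 \le j < m-1 \,\}$. By that lemma, $E$ is a bi-invertibility set in $\cD$. Since $a, b \in \bieq_n\cD$ and $b *_k a$ is defined with $0 \le k < n-1$, we have $b *_k a \in E_n \subseteq E$ by construction. Then \cref{BiEquivalence} applies verbatim: any cell belonging to some bi-invertibility set is an $\omega$-bi-equivalence, so $b *_k a \in \bieq_n\cD$, as desired.

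I do not anticipate a genuine obstacle here; the only point requiring minor care is making sure the indexing hypothesis $0 \le k < n-1$ in the proposition matches exactly the condition defining $E_n$ in \cref{InvertibilitySetOfHorizontalComposites}, so that $b *_k a$ really does land in $E_n$ rather than being excluded by a boundary case. This is the sort of thing where one wants the statement and the lemma to be phrased consistently, and the excerpt already ensures they are. One could also remark that the analogous statement for $\eq\cD$ in place of $\bieq\cD$ would follow the same way using \cref{InvertibilitySetOfEquivalences}, but that is not needed for the present claim.
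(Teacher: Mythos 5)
Your proposal is correct and matches the paper's own proof exactly: both deduce the statement by noting that $b*_ka$ lies in the bi-invertibility set $E$ constructed in \cref{InvertibilitySetOfHorizontalComposites} and then applying \cref{BiEquivalence}. No issues.
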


\begin{proof}
A bi-invertibility set in the sense of \cref{BiInvertibilitySet} containing $b*_ka$ is constructed in \cref{InvertibilitySetOfHorizontalComposites}. It follows from \cref{BiEquivalence} that $b*_ka\in\bieq_n\cD$, as desired.
\end{proof}

\begin{lem}
\label{InvertibilitySetOfVerticalComposites}
Let $\cD$ be an $\omega$-category. If we denote
\[
E_n\coloneqq\{b\underset{n-1}*a \in \cD_n\ |\ a,b\in\bieq_n\cD\},
\]
the set $E\coloneqq\coprod_{n>0}E_n$ is a bi-invertibility set.
\end{lem}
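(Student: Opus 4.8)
The plan is to mimic the strategy used in the proofs of \cref{InvertibilitySetOfHorizontalComposites} and \cref{InvertibilitySetOfIdentities}: given an element $e \coloneqq b *_{n-1} a \in E_n$ with $a, b \in \bieq_n \cD$, I will directly exhibit left and right inverses for $e$ together with the required $(n+1)$-cells, and verify that these witnessing cells again land in $E_{n+1}$, so that $E$ is closed under the bi-invertibility operations. The key structural observation is that, just as horizontal composition of inverses reverses the order only in the "other" direction, vertical composition of inverses reverses the order: if $a$ has left inverse $a^L$ and right inverse $a^R$, and $b$ has left inverse $b^L$ and right inverse $b^R$, then for $e = b *_{n-1} a$ one expects $e^L \coloneqq a^L *_{n-1} b^L$ and $e^R \coloneqq a^R *_{n-1} b^R$ to serve as left and right inverses.

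First I would invoke \cref{CoinductiveDefinitionBiequivalences} to extract, from $a, b \in \bieq_n \cD$, the inverses $a^L, a^R, b^L, b^R \in \cD_n$ and the cells $c, c' \in \bieq_{n+1}\cD$ for $a$ and $d, d' \in \bieq_{n+1}\cD$ for $b$, of the forms displayed in \cref{InvertibilitySetOfHorizontalComposites}. Next I would set $e^L \coloneqq a^L *_{n-1} b^L$ and $e^R \coloneqq a^R *_{n-1} b^R$, and I would build the witnessing $(n+1)$-cell for $e^L *_{n-1} e \to \id_{d^-_{n-1} e}$ by pasting $c$ and $d$ appropriately. Concretely, since $d^-_{n-1} e = d^-_{n-1} a$ and $d^+_{n-1} e = d^+_{n-1} b$, one has $e^L *_{n-1} e = a^L *_{n-1} (b^L *_{n-1} b) *_{n-1} a$; whiskering $d\colon b^L *_{n-1} b \to \id$ by $a^L$ on the left and $a$ on the right along $n-2$ (using the globular identities $d^+_{n-2}(b^L *_{n-1}b) = d^+_{n-2}b = d^-_{n-2}a$, after matching dimensions) yields a cell $a^L *_{n-1} \id_{d^-_{n-1}b} *_{n-1} a \cong a^L *_{n-1} a$, and composing with $c$ gives the desired cell into $\id_{d^-_{n-1}e}$. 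The point is that this witnessing cell is a vertical composite of whiskerings of $c$ and $d$, hence lies in $E_{n+1}$ by \cref{HorizontalCompositeOfBiequivalenceIsBiequivalence} and \cref{IdentityIsBiequivalence} (to absorb the identity cells) — it is here that one must be careful to phrase the composite so that all its pieces visibly belong to $\bieq_{n+1}\cD$; symmetrically for $e^R$ using $c'$ and $d'$. Finally, \cref{BiInvertibilitySet} is satisfied, so $E$ is a bi-invertibility set.

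The main obstacle I anticipate is purely bookkeeping: ensuring that the pasting of $c$ and $d$ into a witnessing cell for $e^L *_{n-1} e \to \id_{d^-_{n-1}e}$ typechecks on the nose, i.e.\ that all the source/target conditions line up so the vertical and whiskered composites are defined, and that the resulting cell genuinely has the form required by \cref{BiInvertibilitySet} rather than merely being connected to it by coherence cells. This is the strict-$\omega$-category analogue of "zig-zag" manipulations, and — as in the proof of \cref{InvertibilitySetOfHorizontalComposites}, where an explicit instance of the interchange law had to be invoked — one or two interchange/unit identities will have to be spelled out to justify that the naive formula for the composite makes sense. Once the formula is pinned down, membership in $E_{n+1}$ follows formally from the already-established closure of $\bieq\cD$ under horizontal composites and under identities.
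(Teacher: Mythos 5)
Your proposal takes essentially the same route as the paper's proof: the paper sets $e^L \coloneqq a^L *_{n-1} b^L$ and $e^R \coloneqq a^R *_{n-1} b^R$ and uses exactly the pastings you describe, namely $\ell \coloneqq c *_{n} (\id_{a^L} *_{n-1} d *_{n-1} \id_{a})$ and $\ell' \coloneqq d' *_{n} (\id_{b} *_{n-1} c' *_{n-1} \id_{b^R})$, concluding that these lie in $E_{n+1}$ by the same appeal to closure of $\omega$-bi-equivalences under identities and under composition along lower-dimensional cells. The one bookkeeping slip is that the whiskerings of $d$ by $\id_{a^L}$ and $\id_{a}$ are composites of $(n+1)$-cells along $(n-1)$-cells (licensed by $d^{+}_{n-1}a = d^{-}_{n-1}b$), not along $(n-2)$-cells as you wrote; with that index corrected your argument is the paper's.
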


\begin{proof}
Given $e\coloneqq b*_{n-1}a\in E_n$, by \cref{CoinductiveDefinitionBiequivalences} there exist $ a^L, a^R, b^L, b^R\in\cD_n$ and $c,c',d,d'\in\bieq_{n+1}\cD$
of the form
\[
c \colon     a^L\underset{n-1}*a \to \id_{d^-_{n-1} a} \quad\text{ and }\quad c' \colon   a\underset{n-1}* a^R  \to \id_{d^+_{n-1} a};\]
\[
d \colon   b^L\underset{n-1}*b\to \id_{d^-_{n-1} b}  
\quad\text{ and }\quad
d' \colon  b\underset{n-1}* b^R \to \id_{d^+_{n-1} b} .
\]
We then define $ e^L\coloneqq a^L*_{n-1} b^L\in\cD_n$ and $ e^R\coloneqq a^R*_{n-1} b^R\in\cD_n$, and 
set $\ell\in\cD_{n+1}$ and $\ell'\in\cD_{n+1}$ to be the composites
\[\ell\coloneqq  c\underset{n}*(\id_{ a^L}\underset{n-1}* d \underset{n-1}* \id_{a}) \colon e^L\underset{n-1}*e \to \id_{d^-_{n-1} e} 
\]
\[
\ell'\coloneqq  d'\underset{n}*(\id_{b}\underset{n-1}* c' \underset{n-1}* \id_{b^R}) \colon  e\underset{n-1}* e^R \to \id_{d^+_{n-1} e}.\]
These composites do make sense because composition is associative and various relations, such as $d_{n-1}^-a=d_{n-1}^-e$, hold.
By \cref{IdentityIsBiequivalence,HorizontalCompositeOfBiequivalenceIsBiequivalence} we can recognize that $\ell$ and $\ell'$ are composites of $\omega$-bi-equivalences of dimension $n+1$ along cells of dimension $n$, so by definition of $E$
we obtain that $\ell,\ell'\in E_{n+1}$.
So $E$ is a bi-invertibility
set containing $e$, as desired.
\end{proof}

\begin{prop}
\label{VerticalCompositeOfBiequivalenceIsBiequivalence}
Let $\cD$ be an $\omega$-category and $n>0$. Given $a,b\in\bieq_n\cD$ such that $b*_{n-1}a\in\cD_n$, we have that $b*_{n-1}a\in\bieq_n\cD$.
\end{prop}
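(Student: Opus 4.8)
The plan is to deduce this immediately from \cref{InvertibilitySetOfVerticalComposites}, in exactly the same way that \cref{HorizontalCompositeOfBiequivalenceIsBiequivalence} was deduced from \cref{InvertibilitySetOfHorizontalComposites} and \cref{IdentityIsBiequivalence} from \cref{BiInvertibilitySetOfIdentities}. Indeed, \cref{InvertibilitySetOfVerticalComposites} already tells us that the set
\[
E\coloneqq\coprod_{n>0}\bigl\{b\underset{n-1}*a\in\cD_n \;\big|\; a,b\in\bieq_n\cD\bigr\}
\]
is a bi-invertibility set in $\cD$. Now, given $a,b\in\bieq_n\cD$ with $b*_{n-1}a$ defined, the cell $b*_{n-1}a$ lies in $E_n$ by construction, hence in a bi-invertibility set; by \cref{BiEquivalence} this is precisely the assertion that $b*_{n-1}a\in\bieq_n\cD$. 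Written out, the proof is the one-liner: a bi-invertibility set containing $b*_{n-1}a$ is constructed in \cref{InvertibilitySetOfVerticalComposites}, so \cref{BiEquivalence} gives $b*_{n-1}a\in\bieq_n\cD$.

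All the genuine work has already been discharged inside \cref{InvertibilitySetOfVerticalComposites}, and that is where the only subtlety sits. There one is handed left/right inverses $a^L,a^R$ for $a$ and $b^L,b^R$ for $b$, together with witnessing $(n+1)$-cells $c,c',d,d'\in\bieq_{n+1}\cD$, and one must (i) exhibit well-typed left/right inverses $e^L = a^L*_{n-1}b^L$, $e^R = a^R*_{n-1}b^R$ and well-typed witnessing $(n+1)$-cells $\ell,\ell'$ built from $c,c',d,d'$ by whiskering and composing — this uses associativity of composition together with the boundary identities such as $d^-_{n-1}a = d^-_{n-1}e$ — and (ii) check that $\ell,\ell'$ again lie in $E_{n+1}$. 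Step (ii) is the part that is not formal: the whiskered pieces $\id_{a^L}*_{n-1}d*_{n-1}\id_a$ and $\id_b*_{n-1}c'*_{n-1}\id_{b^R}$ must be recognized as $\omega$-bi-equivalences of dimension $n+1$, which is exactly why \cref{IdentityIsBiequivalence} and \cref{HorizontalCompositeOfBiequivalenceIsBiequivalence} are needed and why this proposition is placed after them in the logical order.

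So for the proposition as stated there is effectively no obstacle; the verification that $E$ is a bi-invertibility set is the content, and it has been carried out in the preceding lemma. I would therefore simply record the proof as:

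\begin{proof}
A bi-invertibility set in the sense of \cref{BiInvertibilitySet} containing $b*_{n-1}a$ is constructed in \cref{InvertibilitySetOfVerticalComposites}. It follows from \cref{BiEquivalence} that $b*_{n-1}a\in\bieq_n\cD$, as desired.
\end{proof}
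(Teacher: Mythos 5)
Your proof is correct and coincides with the paper's own argument essentially verbatim: both simply note that \cref{InvertibilitySetOfVerticalComposites} provides a bi-invertibility set containing $b*_{n-1}a$ and then invoke \cref{BiEquivalence}. Your surrounding remarks about where the real work lies (inside the lemma, via \cref{IdentityIsBiequivalence} and \cref{HorizontalCompositeOfBiequivalenceIsBiequivalence}) accurately reflect the paper's logical ordering.
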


\begin{proof}
A bi-invertibility set in the sense of \cref{BiInvertibilitySet} containing $b*_{n-1}a$ is constructed in \cref{InvertibilitySetOfVerticalComposites}. It follows from \cref{BiEquivalence} that $b*_{n-1}a\in\bieq_n\cD$, as desired.
\end{proof}

\begin{lem}
\label{BiInvertibilitySetOfHalfInverses}
Let $\cD$ be an $\omega$-category. If we denote
\[
E_n\coloneqq\{ b\underset{n-1}*a^L\in\cD_n\ |\ a,b\in\bieq_n\cD,\  a^L\text{ is a left inverse for }a\},
\]
then the set $E\coloneqq\coprod_{n>0}E_n$ is a bi-invertibility set.
\end{lem}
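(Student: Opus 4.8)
The plan is to verify the defining condition of \cref{BiInvertibilitySet} for $E$ by hand, following the template of the proofs of \cref{InvertibilitySetOfHorizontalComposites,InvertibilitySetOfVerticalComposites}, but with one extra idea that is needed because $E_{n+1}$ is no longer merely a set of composites of biequivalences. Fix $n>0$ and an element $e\coloneqq b\underset{n-1}*a^L\in E_n$, so that $a,b\in\bieq_n\cD$ and $a^L$ is a left inverse of $a$. Unwinding \cref{BiInvertibilitySet} and applying \cref{CoinductiveDefinitionBiequivalences} to $a$ and $b$, one obtains a right inverse $a^R$ of $a$, a left inverse $b^L$ and a right inverse $b^R$ of $b$, and biequivalence $(n+1)$-cells
\[c_a\colon a^L\underset{n-1}*a\to\id_{d^-_{n-1}a},\ \ c'_a\colon a\underset{n-1}*a^R\to\id_{d^+_{n-1}a},\ \ c_b\colon b^L\underset{n-1}*b\to\id_{d^-_{n-1}b},\ \ c'_b\colon b\underset{n-1}*b^R\to\id_{d^+_{n-1}b}.\]
As in those two proofs, the natural candidates for the inverses of $e$ are $e^L\coloneqq a\underset{n-1}*b^L$ and $e^R\coloneqq a\underset{n-1}*b^R$, which have the required boundary $d^+_{n-1}e\to d^-_{n-1}e$.

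The key device is the following \emph{span trick}. Suppose $f$ is an $n$-cell with $d^-_{n-1}f=d^+_{n-1}f$ which fits into a span of biequivalence $(n+1)$-cells $f\xleftarrow{\ \alpha\ }Z\xrightarrow{\ \beta\ }\id_{d^-_{n-1}f}$. Then $E_{n+1}$ contains a cell of the form $f\to\id_{d^-_{n-1}f}$: indeed, \cref{CoinductiveDefinitionBiequivalences} applied to the biequivalence $\alpha$ provides a left inverse $\alpha^L\colon f\to Z$ of $\alpha$, and $\beta\underset{n}*\alpha^L$ is then a cell $f\to\id_{d^-_{n-1}f}$ which, by construction, is of the exact form ``(biequivalence)$\underset{n}*$(left inverse of a biequivalence)'' demanded by the definition of $E_{n+1}$. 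The crucial point is that $\alpha^L$ need only be a \emph{left inverse} of $\alpha$, not itself a biequivalence --- this is precisely the feature of $E$ that makes the argument go through. (In particular, since every $\gamma\in\bieq_{n+1}\cD$ equals $\gamma\underset{n}*\id_{d^-_n\gamma}$ and an identity is a left inverse of itself, we get the inclusion $\bieq_{n+1}\cD\subseteq E_{n+1}$.)

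It then remains to exhibit such spans for $f=e^L\underset{n-1}*e$ and $f=e\underset{n-1}*e^R$, which is a matter of resolving a zigzag using the cells above and \cref{IdentityIsBiequivalence,HorizontalCompositeOfBiequivalenceIsBiequivalence,VerticalCompositeOfBiequivalenceIsBiequivalence}. For $e\underset{n-1}*e^R=b\underset{n-1}*(a^L\underset{n-1}*a)\underset{n-1}*b^R$ one takes $Z=e\underset{n-1}*e^R$ and $\alpha$ the identity, since cancelling $a^L\underset{n-1}*a$ via $c_a$ and then $b\underset{n-1}*b^R$ via $c'_b$ produces a direct biequivalence $\beta\colon e\underset{n-1}*e^R\to\id_{d^+_{n-1}b}$. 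The delicate case is $e^L\underset{n-1}*e=a\underset{n-1}*b^L\underset{n-1}*b\underset{n-1}*a^L$, which admits no ``visible'' biequivalence to an identity; here one takes as apex the cell
\[Z\coloneqq(e^L\underset{n-1}*e)\underset{n-1}*(a\underset{n-1}*a^R)=a\underset{n-1}*b^L\underset{n-1}*b\underset{n-1}*a^L\underset{n-1}*a\underset{n-1}*a^R,\]
with $\alpha\coloneqq\id_{e^L*_{n-1}e}\underset{n-1}*c'_a\colon Z\to e^L\underset{n-1}*e$ obtained by cancelling the trailing $a\underset{n-1}*a^R$, and with $\beta\colon Z\to\id_{d^+_{n-1}a}$ the composite of the whiskerings of $c_b$, $c_a$ and $c'_a$ cancelling $b^L\underset{n-1}*b$, then $a^L\underset{n-1}*a$, then $a\underset{n-1}*a^R$ in turn. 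Both $\alpha$ and $\beta$ are biequivalences by the cited closure lemmas, so the span trick yields the two required cells $c,c'\in E_{n+1}$ (of the correct boundaries $e^L\underset{n-1}*e\to\id_{d^-_{n-1}e}$ and $e\underset{n-1}*e^R\to\id_{d^+_{n-1}e}$), completing the verification that $E$ is a bi-invertibility set.

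I expect the main obstacle to be exactly \emph{finding the span} --- equivalently, recognising that one should \emph{not} try to make $c$ and $c'$ into biequivalences. A direct attempt to do so would, in the case of $e^L\underset{n-1}*e$, force one to select a weak inverse of a whiskering of $c'_a$ and to argue that it is again a biequivalence, i.e.\ that weak inverses of biequivalences are biequivalences; but that statement is essentially equivalent to the present lemma (it is what one obtains from it by specialising to $b=\id$), so this route is circular. The span trick, together with the idea of enlarging $e^L\underset{n-1}*e$ to $Z$ by appending the cancelling block $a\underset{n-1}*a^R$, is what breaks the circularity, since it only ever calls on $E_{n+1}$ to supply a \emph{left} inverse of a biequivalence, which is free of charge by \cref{CoinductiveDefinitionBiequivalences}.
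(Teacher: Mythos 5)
Your proof is correct and follows essentially the same route as the paper's: the paper's cells $x$ and $y$ are exactly your span $\alpha,\beta$ with the same apex $Z=a\underset{n-1}*b^L\underset{n-1}*b\underset{n-1}*a^L\underset{n-1}*a\underset{n-1}*a^R$, the paper likewise sets $\ell=y\underset{n}*x^L$ with $x^L$ only a left inverse of $x$, and it handles $\ell'$ via the same observation that $\ell'=\ell'\underset{n}*\id_{d_n^-\ell'}\in E_{n+1}$. Your closing remark about why $c$ cannot be taken to be a biequivalence outright is an accurate diagnosis of the point of the construction.
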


\begin{proof}
Given $e\coloneqq b\underset{n-1}*a^L\in E_n$ for $a,b\in\bieq_n\cD$, by \cref{CoinductiveDefinitionBiequivalences} there exist $a^R, b^L, b^R\in\cD_n$ and $c,c',d,d'\in\bieq_{n+1}\cD$
of the form
\[
c \colon   a^L\underset{n-1}*a \to  \id_{d^-_{n-1} a}\quad\text{ and }\quad c' \colon   a\underset{n-1}* a^R \to \id_{d^+_{n-1} a},\]
\[
d \colon     b^L\underset{n-1}*b \to \id_{d^-_{n-1} b}\quad\text{ and }\quad d' \colon   b\underset{n-1}* b^R\to \id_{d^+_{n-1} b}.
\]

We first consider $x,y\in\cD_{n+1}$ defined as follows: 
\[
\begin{tikzcd}
y	\colon {a \nunderset{n-1}*
b^L\nunderset{n-1}*b\nunderset{n-1}* a^L \nunderset{n-1}* a \nunderset{n-1}* a^R }
\arrow[r,"{{\id}_{a}\underset{n-1}* d \underset{n-1}* c \underset{n-1}*\id_{a^R}}"]
&[2.5cm] {a\underset{n-1}*a^R} \arrow[r, "{c'}"]& {\id_{d^+_{n-1} a}} 
	\end{tikzcd}\]
\[x\colon a \nunderset{n-1}*
b^L\nunderset{n-1}*b\nunderset{n-1}* a^L \nunderset{n-1}* a \nunderset{n-1}* a^R 
\xrightarrow{{\id}_{a \underset{n-1}*b^L\underset{n-1}*b\underset{n-1}* a^L }\underset{n-1}*c'} a \underset{n-1}*b^L\underset{n-1}*b\underset{n-1}* a^L
\]
By \cref{VerticalCompositeOfBiequivalenceIsBiequivalence,HorizontalCompositeOfBiequivalenceIsBiequivalence,IdentityIsBiequivalence},
we know that $x,y\in\bieq_{n+1}\cD$.
If $x^L$ denotes a left inverse for $x$, we then define $ e^L\coloneqq  a*_{n-1}b^L\in\cD_n$ and $ e^R\coloneqq  a*_{n-1} b^R\in\cD_n$, and
set $\ell\in\cD_{n+1}$ and $\ell'\in\cD_{n+1}$ to be the composites
\[\ell\colon  e^L\underset{n-1}*e \xrightarrow{ x^L}
a\underset{n-1}*b^L\underset{n-1}*b\underset{n-1}* a^L\underset{n-1}*a\underset{n-1}* a^R\xrightarrow{y}\id_{d^-_{n-1} e}
,
\]

\[	\ell'\colon e\underset{n-1}* e^R 
\xrightarrow{\id_{b}\underset{n-1}*c\underset{n-1}*\id_{b^R}} 
b\underset{n-1}*b^R
\xrightarrow{d'}
\id_{d^+_{n-1} e}
\]
By construction, we see that $\ell\in E_{n+1}$.
By \cref{HorizontalCompositeOfBiequivalenceIsBiequivalence,IdentityIsBiequivalence,VerticalCompositeOfBiequivalenceIsBiequivalence}, we see that $\ell'\in\bieq_{n+1}\cD$, and in particular $\ell'=\ell'
*_n \id_{d_n^-\ell'}
\in E_{n+1}$,
so we get that $E$ is a bi-invertibility set containing $e$, as desired.
\end{proof}

\begin{prop}[{\cite[Lemma 14]{RiceCoinductive}}]
\label{LeftAndRightInverseIsBiequivalence}
Let $\cD$ be an $\omega$-category and $n>0$. Given $a\in\bieq_n\cD$, if $ a^L$ and $ a^R$ are, respectively, a left and right weak inverse for $a$, then $ a^L, a^R\in\bieq_n\cD$.
\end{prop}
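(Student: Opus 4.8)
The plan is to realize $a^L$ and $a^R$ as elements of bi-invertibility sets that are already available, and then conclude via \cref{BiEquivalence}. For $a^L$, I would appeal to \cref{BiInvertibilitySetOfHalfInverses} as it stands. There the set with
\[
E_n=\{\, b\underset{n-1}*x^L\in\cD_n\ \mid\ x,b\in\bieq_n\cD,\ x^L\text{ a left inverse for }x\,\}
\]
is shown to be a bi-invertibility set. Choosing $x\coloneqq a\in\bieq_n\cD$, $x^L\coloneqq a^L$, and $b\coloneqq\id_{d^-_{n-1}a}$ --- which lies in $\bieq_n\cD$ by \cref{IdentityIsBiequivalence} --- left unitality of $\underset{n-1}*$ gives $\id_{d^-_{n-1}a}\underset{n-1}*a^L=a^L$, so $a^L$ lies in the bi-invertibility set $E$ of \cref{BiInvertibilitySetOfHalfInverses}; hence $a^L\in\bieq_n\cD$ by \cref{BiEquivalence}.

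For $a^R$, I would use the left--right mirror image of \cref{BiInvertibilitySetOfHalfInverses}: the set with
\[
E'_n=\{\, x^R\underset{n-1}*b\in\cD_n\ \mid\ x,b\in\bieq_n\cD,\ x^R\text{ a right inverse for }x\,\}
\]
is a bi-invertibility set by exactly the argument proving \cref{BiInvertibilitySetOfHalfInverses}, with domains and codomains, and the roles of ``left'' and ``right'', interchanged throughout; note that all of the inputs used there --- \cref{IdentityIsBiequivalence,HorizontalCompositeOfBiequivalenceIsBiequivalence,VerticalCompositeOfBiequivalenceIsBiequivalence} --- are invariant under this symmetry. Then, choosing $x\coloneqq a$, $x^R\coloneqq a^R$ and $b\coloneqq\id_{d^+_{n-1}a}\in\bieq_n\cD$, right unitality gives $a^R\underset{n-1}*\id_{d^+_{n-1}a}=a^R$, so $a^R\in E'$ and therefore $a^R\in\bieq_n\cD$ by \cref{BiEquivalence}.

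The only delicate point I anticipate is precisely the need for the mirrored form of \cref{BiInvertibilitySetOfHalfInverses}, since that lemma is phrased with left inverses only and so does not literally cover $a^R$. It is tempting to try to deduce the $a^R$-case from the $a^L$-case just established --- for instance by noting that $a$ is itself a left inverse for $a^R$, witnessed by the very cell $c'\colon a\underset{n-1}*a^R\to\id_{d^+_{n-1}a}$ lying in $\bieq_{n+1}\cD$, and then attempting to supply the missing right-inverse datum for $a^R$ by transporting $c\colon a^L\underset{n-1}*a\to\id_{d^-_{n-1}a}$ along an equivalence $a^L\simeq a^R$ --- but this is circular, as producing and inverting the $(n+1)$-dimensional cells involved would already require \cref{LeftAndRightInverseIsBiequivalence} in dimension $n+1$. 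Constructing a separate, dual bi-invertibility set for $a^R$ sidesteps this. Beyond that, the argument is just bookkeeping: one checks that $\id_{d^-_{n-1}a}\underset{n-1}*a^L$ and $a^R\underset{n-1}*\id_{d^+_{n-1}a}$ are defined --- the boundaries match because $d^+_{n-1}a^L=d^-_{n-1}a$ and $d^-_{n-1}a^R=d^+_{n-1}a$ --- and that the correct (left, resp.\ right) unit law is applied in each case.
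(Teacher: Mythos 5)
Your argument is correct and is essentially the paper's own proof: the paper likewise obtains $a^L$ from the bi-invertibility set of \cref{BiInvertibilitySetOfHalfInverses} (implicitly via the same $b=\id_{d^-_{n-1}a}$ unitality trick) and handles $a^R$ by "a similar argument", i.e.\ exactly the mirrored lemma you describe. Your explicit remarks on why the shortcut via $a^L$ would be circular and why the mirror image of the lemma goes through are a welcome elaboration, not a deviation.
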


\begin{proof}
A bi-invertibility set in the sense of \cref{BiInvertibilitySet} containing $a^L$ is constructed in \cref{BiInvertibilitySetOfHalfInverses}, and one for $a^R$ can be constructed with a similar argument. It follows from \cref{BiEquivalence} that $ a^L, a^R\in\bieq_n\cD$, as desired.
\end{proof}

\begin{lem}
\label{InvertibilitySetOfBiequivalences}
Given an $\omega$-category $\cD$,
we have that $\bieq\cD\coloneqq\coprod_{n>0}\bieq_n\cD$ is an invertibility set.
\end{lem}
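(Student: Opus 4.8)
The goal is to show that $\bieq\cD = \coprod_{n>0}\bieq_n\cD$ is an invertibility set in the sense of \cref{InvertibilitySet}. The plan is to verify directly the defining condition: for each $n>0$ and each $a\in\bieq_n\cD$, I must produce a single cell $\tilde a\in\bieq_n\cD$ that serves simultaneously as a left and a right inverse (so that $\tilde a\underset{n-1}*a\to\id$ and $a\underset{n-1}*\tilde a\to\id$), together with witnessing $(n+1)$-cells lying in $\bieq_{n+1}\cD$. The subtlety — and the whole point of the lemma — is that membership in $\bieq_n\cD$ a priori only gives a \emph{left} inverse $a^L$ and a \emph{separate right} inverse $a^R$, whereas an invertibility set demands a genuine \emph{two-sided} weak inverse. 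So the main work is to fabricate such a two-sided inverse out of $a^L$ and $a^R$.

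First I would recall, via \cref{CoinductiveDefinitionBiequivalences}, that $a\in\bieq_n\cD$ yields $a^L,a^R\in\cD_n$ and $c\colon a^L\underset{n-1}*a\to\id_{d^-_{n-1}a}$, $c'\colon a\underset{n-1}*a^R\to\id_{d^+_{n-1}a}$ with $c,c'\in\bieq_{n+1}\cD$. I then set $\tilde a\coloneqq a^L$ (one could equally take $a^R$). The left-inverse witness is already $c$. For the right-inverse witness I need an $(n+1)$-cell $a\underset{n-1}*a^L\to\id_{d^+_{n-1}a}$; I build it by the standard zig-zag, pre- and post-composing with $c'$ and an identity-padded $c$: explicitly, $a\underset{n-1}*a^L \xrightarrow{\ \id_a\underset{n-1}*(\text{unit})\ } a\underset{n-1}*a^L\underset{n-1}*a\underset{n-1}*a^R \xrightarrow{\ \id_a\underset{n-1}*c\underset{n-1}*\id_{a^R}\ } a\underset{n-1}*a^R \xrightarrow{\ c'\ } \id$, using unitality to insert $a\underset{n-1}*a^R$ and its implicit $\id$-cells. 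By \cref{IdentityIsBiequivalence}, \cref{HorizontalCompositeOfBiequivalenceIsBiequivalence}, and \cref{VerticalCompositeOfBiequivalenceIsBiequivalence}, each factor of this composite lies in $\bieq_{n+1}\cD$, hence so does the whole composite; call it $c''$. Thus $\tilde a = a^L$ has the required shape, $c\in\bieq_{n+1}\cD$, and $c''\in\bieq_{n+1}\cD$, which is exactly \cref{InvertibilitySet} applied to $\bieq\cD$.

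I would then note that $\tilde a = a^L$ itself lies in $\bieq_n\cD$ by \cref{LeftAndRightInverseIsBiequivalence}, so the data stays inside the set $\bieq\cD$ as required by the definition of an invertibility set. The closure results on bi-equivalences under vertical composition, horizontal composition, identities, and taking one-sided inverses — \cref{IdentityIsBiequivalence}, \cref{HorizontalCompositeOfBiequivalenceIsBiequivalence}, \cref{VerticalCompositeOfBiequivalenceIsBiequivalence}, \cref{LeftAndRightInverseIsBiequivalence} — do all the heavy lifting, so this lemma is essentially a bookkeeping consequence of them.

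The main obstacle I anticipate is purely notational: writing the zig-zag $c''$ so that all dimensions, domains, and codomains of the padding identities match up, and checking the handful of interchange/unitality relations that make the composite well-typed (analogous to the $e*_{n-1}e^R$ computation in the proof of \cref{InvertibilitySetOfHorizontalComposites}). There is no genuine conceptual difficulty: the coherence witnesses at level $n+1$ need not themselves satisfy any equations, only lie in $\bieq_{n+1}\cD$, and that is guaranteed termwise by the already-established closure properties.
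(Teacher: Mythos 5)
Your overall strategy coincides with the paper's: take $\tilde a \coloneqq a^L$, reuse $c$ as the witness for $\tilde a\underset{n-1}*a\to\id_{d^-_{n-1}a}$, manufacture the missing witness $a\underset{n-1}*a^L\to\id_{d^+_{n-1}a}$ as a zig-zag through $a\underset{n-1}*a^L\underset{n-1}*a\underset{n-1}*a^R$, and invoke \cref{LeftAndRightInverseIsBiequivalence} together with the closure properties to keep all the data inside $\bieq\cD$.

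However, there is a genuine gap at the first leg of your zig-zag. The cell you label ``$(\text{unit})$'', which must go from $a\underset{n-1}*a^L$ to $a\underset{n-1}*a^L\underset{n-1}*a\underset{n-1}*a^R$, is \emph{not} produced by unitality: in a strict $\omega$-category the unit laws only identify $x\underset{n-1}*\id$ with $x$; they do not supply a non-trivial $(n+1)$-cell $\id_{d^+_{n-1}a}\to a\underset{n-1}*a^R$. The only way to obtain a cell in that direction is as a \emph{left inverse} ${c'}^L$ of $c'$, which exists precisely because $c'\in\bieq_{n+1}\cD$, and which itself lies in $\bieq_{n+1}\cD$ by \cref{LeftAndRightInverseIsBiequivalence}; the paper's first leg is accordingly $\id_{a\underset{n-1}*a^L}\underset{n-1}*{c'}^L$ (note also that the whiskering must be by $\id_{a\underset{n-1}*a^L}$ rather than by $\id_a$ alone for the composite to typecheck). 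Without identifying the unit as ${c'}^L$, your claim that ``each factor of this composite lies in $\bieq_{n+1}\cD$'' cannot be verified for the first factor, and the construction of $c''$ is incomplete. Once this cell is supplied, the rest of your argument goes through exactly as in the paper.
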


\begin{proof}
Given $a\in \bieq_n\cD$, by \cref{BiInvertibilitySet} there exist $ a^L, a^R\in\cD_{n}$ and $c,c'\in \bieq_{n+1}\cD$ of the form
\[c \colon   a^L\underset{n-1}*a \to \id_{d^-_{n-1} a} \quad\text{ and }\quad
c'\colon a\underset{n-1}* a^R\to \id_{d^+_{n-1} a} .\]
If $ {c'}^L\in\cD_{n+1}$ is a left inverse for $c'$, we set $\ell\in\cD_{n+1}$ to be the composite
\[
\ell\colon {a\underset{n-1}* a^L} \xrightarrow{\id_{a\underset{n-1}* a^L}\underset{n-1}*{c'}^L} {a\nunderset{n-1}* a^L\nunderset{n-1}* a\nunderset{n-1}* a^R}  \xrightarrow{\id_a\underset{n-1}*c\underset{n-1}*\id_{ a^R}} {a\underset{n-1}* a^R}  \xrightarrow{c'} {\id_{d_{n-1}^+a}}.
\]
By \cref{LeftAndRightInverseIsBiequivalence} we know that $ a^L\in\bieq_{n}\cD$, and
by \cref{IdentityIsBiequivalence,VerticalCompositeOfBiequivalenceIsBiequivalence,HorizontalCompositeOfBiequivalenceIsBiequivalence,LeftAndRightInverseIsBiequivalence}
we know that $\ell\in \bieq_{n+1}\cD$.
Given that we also have that $c\in\bieq_{n+1}\cD$, this shows that $\bieq\cD$ is an invertibility set, as desired.
\end{proof}

\begin{prop}[{\cite[Corollary 19]{RiceCoinductive}}]
\label{prop:invertible is the same as biinvertible}
Let $\cD$ be an $\omega$-category and $n>0$. Given $a\in\cD_n$, we have that $a\in\eq_n\cD$ if and only if $a\in\bieq_n\cD$.
\end{prop}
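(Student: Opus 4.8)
The plan is to prove the two implications separately, using the closure properties already established. The backward direction is essentially immediate: if $a \in \bieq_n\cD$, then by \cref{InvertibilitySetOfBiequivalences} the set $\bieq\cD$ is an invertibility set containing $a$, so by \cref{Equivalence} we get $a \in \eq_n\cD$. This is really just a restatement of \cref{InvertibilitySetOfBiequivalences}.

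The forward direction is where the work lies, but it is also short given what precedes. Suppose $a \in \eq_n\cD$. By \cref{InvertibilitySetOfEquivalences}, the set $\eq\cD$ is an invertibility set in $\cD$. I claim that every invertibility set is a bi-invertibility set: indeed, if $E$ is an invertibility set and $a \in E_n$, then there is a weak inverse $\tilde a \in E_n$ together with $c, c' \in E_{n+1}$ witnessing $\tilde a \comp_{n-1} a \to \id$ and $a \comp_{n-1} \tilde a \to \id$; setting $a^L = a^R = \tilde a$ exhibits the bi-invertibility condition of \cref{BiInvertibilitySet}. (This is precisely the content of \cref{InvertibilitySetIsBiInvertibilitySet}.) Hence $\eq\cD$ is a bi-invertibility set containing $a$, and by \cref{BiEquivalence} we conclude $a \in \bieq_n\cD$.

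So the entire proof reduces to invoking \cref{InvertibilitySetOfEquivalences} and \cref{InvertibilitySetIsBiInvertibilitySet} for one direction, and \cref{InvertibilitySetOfBiequivalences} for the other. I do not anticipate a genuine obstacle here; the real mathematical content has already been front-loaded into \cref{InvertibilitySetOfBiequivalences}, whose proof in turn depended on the chain of closure lemmas (\cref{IdentityIsBiequivalence,HorizontalCompositeOfBiequivalenceIsBiequivalence,VerticalCompositeOfBiequivalenceIsBiequivalence,LeftAndRightInverseIsBiequivalence}). The only point requiring a small amount of care is making sure that the coinductive characterizations (\cref{CoinductiveDefinitionEquivalences,CoinductiveDefinitionBiequivalences}) are used consistently with the invertibility-set definitions, but since the statement is phrased purely in terms of $\eq_n\cD$ and $\bieq_n\cD$ this bookkeeping is routine.
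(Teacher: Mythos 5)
Your proof is correct and follows essentially the same route as the paper's: the forward direction combines \cref{InvertibilitySetOfEquivalences} with \cref{InvertibilitySetIsBiInvertibilitySet} to exhibit $\eq\cD$ as a bi-invertibility set containing $a$, and the backward direction invokes \cref{InvertibilitySetOfBiequivalences} to exhibit $\bieq\cD$ as an invertibility set containing $a$. These are precisely the citations in the paper's own two-line proof, so no further comment is needed.
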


\begin{proof}
If $a\in\eq_n\cD$ (resp.~$a\in\bieq_n\cD$), a bi-invertibility set (resp.~invertibility set) containing $a$ is constructed in \cref{InvertibilitySetIsBiInvertibilitySet,InvertibilitySetOfEquivalences}  (resp.~\cref{InvertibilitySetOfBiequivalences}). It follows from \cref{BiEquivalence} (resp.~\cref{Equivalence}) that $a\in\bieq_n\cD$ (resp.~$a\in\eq_n\cD$), as desired.
\end{proof}

\subsection{The homotopy theory of \omegat-categories}

\begin{thm}[{\cite[\textsection4,5]{LMW}}]
There exists a model structure on the category $\omega\cat$ of $\omega$-categories, which we denote $\omega\cat_{\mathrm{can}}$ and call the \emph{canonical model structure}, in which:
\begin{itemize}[leftmargin=*]
\item every object is fibrant.
\item the class of cofibrations is generated by the set of boundary inclusions $\partial\cC_n\hookrightarrow\cC_n$ for $n\geq0$.
\item the cofibrant objects are precisely the polygraphs, considered e.g.~in \cite[\textsection5]{LMW}.
\end{itemize}
\end{thm}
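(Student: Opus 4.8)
The plan, following \cite[\textsection 4--5]{LMW}, is to recognize $\omega\cat_{\mathrm{can}}$ as a cofibrantly generated model structure. Since $\omega\cat$ is the category of algebras for the (finitary) free-$\omega$-category monad on the presheaf category over the globe category, it is locally presentable, hence complete and cocomplete, and Quillen's small object argument applies to any set of morphisms. One puts $I\coloneqq\{\partial\cC_n\hookrightarrow\cC_n\mid n\geq 0\}$ and \emph{declares} the cofibrations to be the maps in $\mathrm{cof}(I)$ and the trivial fibrations to be the maps with the right lifting property against $I$; unwinding the lifting problems shows that a trivial fibration is exactly an $\omega$-functor that is surjective on $0$-cells and, in each dimension $n\geq 1$, surjective onto the $n$-cells having any prescribed compatible boundary. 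In the same vein, building an object from the initial $\omega$-category by a transfinite sequence of pushouts of maps of $I$ is precisely the operation of freely attaching generating cells along their boundaries, i.e.\ of forming the $\omega$-category presented by a polygraph; the remaining assertion about cofibrant objects --- that a retract of a polygraph is again a polygraph --- is what \cite[\textsection 5]{LMW} establishes.

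Next I would fix the weak equivalences. Declare an $\omega$-functor $F\colon\cC\to\cD$ to be a \emph{weak equivalence} if it is ``essentially surjective and fully faithful'' in the $\omega$-categorical sense: every object of $\cD$ is connected to an object in the image of $F$ by a $1$-cell belonging to $\eq\cD$, and for all objects $a,b$ of $\cC$ the induced $\omega$-functor $\hom_\cC(a,b)\to\hom_\cD(Fa,Fb)$ is again a weak equivalence. This is a coinductive definition --- equivalently, $F$ is, in every dimension, surjective up to $\omega$-equivalence onto cells with a given compatible boundary --- and it can be made precise, and matched with the homotopy relation of \cite{LMW}, by means of the (bi-)invertibility-set formalism recalled above. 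Using \cref{CoinductiveDefinitionBiequivalences}, \cref{prop:invertible is the same as biinvertible} and the closure properties of $\bieq$ from \cref{IdentityIsBiequivalence,HorizontalCompositeOfBiequivalenceIsBiequivalence,VerticalCompositeOfBiequivalenceIsBiequivalence,LeftAndRightInverseIsBiequivalence}, one checks that the class $\cW$ of weak equivalences has the two-out-of-three property, is closed under retracts and under filtered colimits, and contains every trivial fibration. One then \emph{defines} the fibrations to be the maps with the right lifting property against the class $\mathrm{cof}(I)\cap\cW$ of trivial cofibrations.

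With these definitions the model-category axioms follow from a standard recognition criterion for cofibrantly generated model structures (in the spirit of Smith's theorem, or of the classical Kan-style recognition). Besides the closure properties of $\cW$ just listed and the small object argument, the essential --- and technically hardest --- input is a well-behaved \emph{cylinder functor} $\mathrm{Cyl}$ on $\omega\cat$: an $\omega$-functorial factorization of the fold map $\cC\amalg\cC\to\cC$ as $\cC\amalg\cC\rightarrowtail\mathrm{Cyl}(\cC)\xrightarrow{\sim}\cC$, built from a carefully chosen ``reversible interval'' $\omega$-category $\cJ=\mathrm{Cyl}(\cC_0)$. From such a cylinder one extracts a set $J$ of generating trivial cofibrations --- for instance the pushout-products of the maps of $I$ with an endpoint inclusion $\cC_0\hookrightarrow\cJ$, which the cylinder shows to be weak equivalences --- and one then deduces that a fibration which is a weak equivalence is a trivial fibration; the factorization and lifting axioms follow formally. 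I expect the construction and analysis of $\cJ$ to be the hard part: its higher cells must be organized so that $\cJ$ behaves like an interval in \emph{every} dimension at once, and it is exactly this sort of coherence requirement --- the same one behind the failure of the naive walking $\omega$-equivalence $\omega\cE$ to be contractible, as discussed in the Introduction --- that makes the construction delicate.
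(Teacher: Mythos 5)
The paper does not actually prove this statement: its ``proof'' is a two-line citation, recording that the model structure is \cite[Theorem~4.39]{LMW} and that the identification of the (co)fibrant objects is in \cite[\textsection 5]{LMW}. Your sketch therefore does considerably more than the paper, and at the level of strategy it is a faithful reconstruction of what \cite{LMW} actually do: local presentability and the small object argument, cofibrations generated by the boundary inclusions, the identification of cell complexes with polygraphs together with M\'etayer's retract-closure theorem, coinductively defined weak equivalences phrased through $\omega$-equivalences, and a cylinder as the technical heart. Two caveats are worth recording. First, your proposed set $J$ of generating trivial cofibrations (pushout-products of $I$ with an endpoint inclusion of an interval $\cJ$) is not how \cite{LMW} proceed: they obtain generating trivial cofibrations and the contractible interval only by abstract means (this is precisely the content of \cite[\textsection 4.7]{LMW} invoked in the Introduction), and no explicit $\cJ$ with the required coherence was known --- producing one is exactly the point of the present paper, so an argument that \emph{starts} from an explicit coherent interval would be circular here. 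Second, phrases such as ``one checks'' for the two-out-of-three, retract, and filtered-colimit closure of $\cW$, and ``the factorization and lifting axioms follow formally,'' compress the genuinely hard verifications of \cite{LMW}; as a self-contained proof the sketch has gaps there, but as a gloss on the cited theorem it is accurate and, in its final observation about the coherence of $\cJ$, it correctly identifies why the main theorem of this paper is not already contained in \cite{LMW}.
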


\begin{proof}
The model structure $\omega\cat_{\mathrm{can}}$ is constructed in \cite[Theorem~4.39]{LMW}, and the description of the fibrant and cofibrant objects can be found in  \cite[\textsection 5]{LMW}.
\end{proof}

\subsection{The model for the coherent \omegat-equivalence}

\begin{const}
\label{constructionJ}
We denote by $\cQ$ the free category generated by three $1$-cells $f\colon p\to q$, $g\colon  q\to p$ and $g'\colon q\to p$.
 This is obtained by gluing $f$ ``head-to-tail'' with both $g$ and $g'$,
and generating all possible compositions.
The set of objects is $\Ob\cQ=\{p, q\}$.
The category $\cQ$ as a whole can be understood as the pushout in $\omega\cat$
\[
\begin{tikzcd}[column sep=3.15cm]
\partial\cC_1^{\circ}\amalg\partial\cC_1\amalg\partial\cC_1^{\circ}\arrow[r,""]\arrow[d,hook] 
&\cC_0\amalg\cC_0\arrow[d,hook]\\
\cC_1^{\circ}\amalg\cC_1\amalg\cC_1^{\circ}\arrow[r]&\cQ
\end{tikzcd}
\]
\end{const}

\begin{const}
\label{Jtruncated}
Let $\widehat{\omega\cE}^{(0)} \coloneqq \cC_0 \amalg \cC_0$.
For $\indind > 0$, 
we define inductively $\widehat{\omega\cE}^{(\indind)}$ to be an $\omega$-category (in fact a $\indind$-category) coming with a triple of $\omega$-functors
\[\imath_{\indind}\colon \widehat{\omega\cE}^{(\indind-1)} \to \widehat{\omega\cE}^{(\indind)}\quad\text{ and }\quad\firstiso_{\indind}, \secondiso_{\indind}\colon \Sigma(\widehat{\omega\cE}^{(\indind-1)}) \to \widehat{\omega\cE}^{(\indind)}.\]

\begin{itemize}[leftmargin=*]
\item For $\indind = 1$, we let $\widehat{\omega\cE}^{(1)} \coloneqq \cQ$, we let $\imath_1$ be the inclusion
\[
\widehat{\omega\cE}^{(0)}=\cC_0\amalg\cC_0 \hookrightarrow\cQ= \widehat{\omega\cE}^{(1)} 
\]
and we let $\firstiso_1$ and $\secondiso_1$
\[
\firstiso_1\colon\Sigma\widehat{\omega\cE}^{(0)}=\cC_1\amalg\cC_1 \to\cQ= \widehat{\omega\cE}^{(1)}\quad\text{ and }\quad
\secondiso_1\colon
\Sigma\widehat{\omega\cE}^{(0)}=\cC_1\amalg\cC_1\to\cQ= \widehat{\omega\cE}^{(1)} 
\]
be the $\omega$-functors determined by
\[
    \firstiso_1\colon \Sigma p \mapsto g \underset{0}* f, \; \Sigma q \mapsto \id_p,\quad\text{ and }\quad
    \secondiso_1\colon \Sigma p \mapsto f \underset{0}* g', \; \Sigma q \mapsto \id_q.
\]

\item For $\indind > 1$, we let $\widehat{\omega\cE}^{(\indind)}$, $\imath_\indind$, $\firstiso_\indind$, and $\secondiso_\indind$ be defined by the pushout in $\omega\cat$
\begin{equation}
    \label{omegacen_pushout}
\begin{tikzcd}
	{\Sigma(\widehat{\omega\cE}^{(\indind-2)}) \amalg \Sigma(\widehat{\omega\cE}^{(\indind-2)})} &&& {\widehat{\omega\cE}^{(\indind-1)}} \\
	{\Sigma(\widehat{\omega\cE}^{(\indind-1)}) \amalg \Sigma(\widehat{\omega\cE}^{(\indind-1)})} &&& {\widehat{\omega\cE}^{(\indind)}}.
	\arrow["{[\firstiso_{\indind-1}, \secondiso_{\indind-1}]}", from=1-1, to=1-4]
	\arrow["{\imath_{\indind}}", from=1-4, to=2-4]
	\arrow["{\Sigma(\imath_{\indind-1}) \amalg \Sigma (\imath_{\indind-1})}", from=1-1, to=2-1,swap]
	\arrow["{[\firstiso_{\indind}, \secondiso_{\indind}]}", from=2-1, to=2-4,swap]
\end{tikzcd}
\end{equation}
\end{itemize}
\end{const}

\begin{const}
\label{constomegahat}
We denote by $\widehat{\omega\cE}$ the $\omega$-category
obtained as the colimit in $\omega\cat$
\[\widehat{\omega\cE}\coloneqq \colim[\quad\dots\hookleftarrow\widehat{\omega\cE}^{(\indind+1)}\xhookleftarrow{\imath_{\indind+1}} \widehat{\omega\cE}^{(\indind)}\hookleftarrow\dots\hookleftarrow\widehat{\omega\cE}^{(2)}\xhookleftarrow{\imath_2}\widehat{\omega\cE}^{(1)}\xhookleftarrow{\imath_1}\widehat{\omega\cE}^{(0)}\quad].\]

\begin{rmk}
\label{alphainfinity}
The $\omega$-functors $\firstiso_k,\secondiso_k\colon\Sigma(\widehat{\omega\cE}^{(k-1)})\to\widehat{\omega\cE}^{(k)}$
induce $\omega$-functors
\[
\firstiso_{\infty},\secondiso_{\infty}\colon\Sigma(\widehat{\omega\cE})\to\widehat{\omega\cE}.\]
\end{rmk}
\end{const}

The following result justifies the name of walking $\omega$-equivalence.

\begin{prop}
\label{walkingBiinvertibility}
Let $\cD$ be an $\omega$-category. Given $a \in \cD_n$, we have that $a \in \bieq_n\cD$ if and only if  there exists an $\omega$-functor $\tilde a\colon \Sigma^{n-1}(\widehat{\omega\cE}) \to \cD$ such that the following diagram commutes:
\begin{equation} \label{walking_biequivalence_lifting}
\begin{tikzcd}
	{\cC_n} && \cD \\
	{\Sigma^{n-1}(\widehat{\omega\cE})}
	\arrow["{\Sigma^{n-1}f}"', from=1-1, to=2-1]
	\arrow["a", from=1-1, to=1-3]
	\arrow["{\tilde{a}}"', from=2-1, to=1-3]
\end{tikzcd}
\end{equation}
\end{prop}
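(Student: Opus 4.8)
The plan is to prove both implications by induction on $n$, using the adjunction $\Sigma\dashv\hom$ of \cref{AdjointOfSuspension} to strip off suspensions until we reach the case $n=1$. For $n\geq 2$ and $a\in\cD_n$, set $x\coloneqq d^-_0a$ and $y\coloneqq d^+_0a$, so that $a$ is literally an $(n-1)$-cell of $\hom_\cD(x,y)$. Since for $n\geq 2$ every left or right inverse of $a$, and every cell witnessing one, again has $0$-source $x$ and $0$-target $y$, the conditions of \cref{BiInvertibilitySet} match up between $\cD$ and $\hom_\cD(x,y)$: a bi-invertibility set of $\cD$ restricts to one supported on the cells of dimension $\geq 2$ with $0$-boundary $(x,y)$, and those correspond bijectively to bi-invertibility sets of $\hom_\cD(x,y)$; hence, by \cref{BiEquivalence}, $a\in\bieq_n\cD$ if and only if $a\in\bieq_{n-1}\hom_\cD(x,y)$. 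On the other hand, transposing the square \eqref{walking_biequivalence_lifting} along $\Sigma\dashv\hom$ — and noting that, since $\cC_n=\Sigma\cC_{n-1}$ and $\Sigma^{n-1}f=\Sigma(\Sigma^{n-2}f)$, any $\tilde a$ solving it must send the two objects of $\Sigma^{n-1}\widehat{\omega\cE}$ to $x$ and $y$ — identifies its solutions with those of the analogous square for $\Sigma^{n-2}\widehat{\omega\cE}$ mapping into $\hom_\cD(x,y)$. So the inductive step goes through, and we are reduced to $n=1$: to show $a\in\bieq_1\cD$ if and only if there is an $\omega$-functor $\tilde a\colon\widehat{\omega\cE}\to\cD$ with $\tilde a\circ f=a$, i.e. sending the $1$-cell $f$ of \cref{constructionJ} to $a$.

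Since $\omega$-functors preserve $\omega$-bi-equivalences (the image of a bi-invertibility set under an $\omega$-functor is again one, as $\omega$-functors respect compositions, identities and the boundary operators) and $a=\tilde a\circ f$, the ``if'' direction reduces to showing $f\in\bieq_1\widehat{\omega\cE}$. We produce a bi-invertibility set $E$ in $\widehat{\omega\cE}$ containing $f$ built from the structural $\omega$-functors $\firstiso_\infty,\secondiso_\infty\colon\Sigma(\widehat{\omega\cE})\to\widehat{\omega\cE}$ of \cref{alphainfinity}: let $E$ be the smallest subset of $\widehat{\omega\cE}$ containing $f$ that is stable under $w\mapsto\firstiso_\infty(\Sigma w)$ and $w\mapsto\secondiso_\infty(\Sigma w)$ on positive-dimensional cells and closed under identities, compositions $*_k$, and passage to left and right inverses (so, in particular, $g$ and $g'$ lie in $E$ as a left and a right inverse of $f$). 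To see that $E$ is self-supporting: cells reached through the last three closure operations receive their inverses and witnessing cells, inside $E$, from the constructions in the proofs of \cref{IdentityIsBiequivalence,HorizontalCompositeOfBiequivalenceIsBiequivalence,VerticalCompositeOfBiequivalenceIsBiequivalence,LeftAndRightInverseIsBiequivalence}; $f$ has $g,g'$ as left and right inverses, witnessed by $\firstiso_\infty(\Sigma f),\secondiso_\infty(\Sigma f)\in E$; and if $w\in E$ has left and right inverses $w^L,w^R$ witnessed by cells $c_w,c_w'\in E$, then — since $\firstiso_\infty$ commutes with all operators and $\Sigma$ shifts dimension — $\firstiso_\infty(\Sigma w^L),\firstiso_\infty(\Sigma w^R)$ are left and right inverses of $\firstiso_\infty(\Sigma w)$ witnessed by $\firstiso_\infty(\Sigma c_w),\firstiso_\infty(\Sigma c_w')\in E$, and symmetrically for $\secondiso_\infty(\Sigma w)$. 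Hence $E\subseteq\bieq\widehat{\omega\cE}$, and in particular $f\in\bieq_1\widehat{\omega\cE}$.

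For the ``only if'' direction, given $a\in\bieq_1\cD$ we build $\tilde a$ as $\colim_k\tilde a^{(k)}$ for a compatible family $\tilde a^{(k)}\colon\widehat{\omega\cE}^{(k)}\to\cD$ defined by induction on $k$, carrying the invariant that $\tilde a^{(k)}$ sends every positive-dimensional generator of $\widehat{\omega\cE}^{(k)}$ to an $\omega$-bi-equivalence. For $k=0$ we send the two objects to $x$ and $y$; for $k=1$ we use the freeness of $\cQ$ to set $\tilde a^{(1)}(f)\coloneqq a$ and take $\tilde a^{(1)}(g),\tilde a^{(1)}(g')$ to be a left and a right inverse of $a$ (which exist by \cref{CoinductiveDefinitionBiequivalences} and are themselves $\omega$-bi-equivalences by \cref{LeftAndRightInverseIsBiequivalence}). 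For $k\geq 2$, the pushout \eqref{omegacen_pushout} reduces the construction of $\tilde a^{(k)}$ from $\tilde a^{(k-1)}$ to extending $\tilde a^{(k-1)}\circ\firstiso_{k-1}$ and $\tilde a^{(k-1)}\circ\secondiso_{k-1}$ along $\Sigma(\imath_{k-1})$ to $\omega$-functors out of $\Sigma(\widehat{\omega\cE}^{(k-1)})$; transposing along $\Sigma\dashv\hom$, this becomes the problem of extending two $\omega$-functors out of $\widehat{\omega\cE}^{(k-2)}$, into $\hom_\cD(x,x)$ and $\hom_\cD(y,y)$ respectively, along $\imath_{k-1}$. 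The invariant (upgraded from generators to all positive-dimensional cells via \cref{IdentityIsBiequivalence,HorizontalCompositeOfBiequivalenceIsBiequivalence,VerticalCompositeOfBiequivalenceIsBiequivalence}), together with the correspondence between $\omega$-bi-equivalences in $\cD$ and in its hom-$\omega$-categories recorded above, ensures that each of these is an extension problem of exactly the same shape as the original one — with $\cD$ replaced by a hom-$\omega$-category and $a$ replaced by one of the coherence cells witnessing $a\in\bieq_1\cD$, which is again an $\omega$-bi-equivalence by \cref{CoinductiveDefinitionBiequivalences,LeftAndRightInverseIsBiequivalence}. So the construction closes up by induction on $k$, the invariant is preserved, and $\tilde a\circ f=a$.

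The main obstacle is that the data exhibiting a cell as an $\omega$-bi-equivalence is genuinely infinite, so neither implication can be obtained by an induction on dimension that bottoms out; the delicate point in each case is that the relevant bookkeeping propagates correctly through the finite presentations of $\widehat{\omega\cE}$. For ``only if'' this amounts to checking that every approximation $\tilde a^{(k)}$ already carries enough of the bi-equivalence structure of $a$ for the pushouts \eqref{omegacen_pushout} to mirror the coinductive unfolding of \cref{CoinductiveDefinitionBiequivalences} at the next level, and for ``if'' that the candidate $E$ built from $\firstiso_\infty$ and $\secondiso_\infty$ really is self-supporting and not merely a generating set.
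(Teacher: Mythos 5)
Your ``only if'' half is essentially the paper's argument (a simultaneous recursion over the construction stage $k$, carried out for all the chosen witnesses at once so that the pushout \eqref{omegacen_pushout} can be fed the stage-$(k-1)$ approximations of the coherence cells), and the preliminary reduction to $n=1$ via $\Sigma\dashv\hom$ is sound. The genuine problem is in the ``if'' half. Your strategy --- prove $f\in\bieq_1\widehat{\omega\cE}$ intrinsically and then push forward along $\tilde a$ --- is viable, but the bi-invertibility set $E$ you build does not do the job as described. Because you close $E$ under ``passage to left and right inverses'' (and under composition), you must verify the condition of \cref{BiInvertibilitySet} at, e.g., the cell $g$. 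No $2$-generator of $\widehat{\omega\cE}$ has the form $g^L*_0 g\to\id_q$: the $2$-generators are $\firstiso_\infty(\Sigma e),\secondiso_\infty(\Sigma e)$ for $e\in\{f,g,g'\}$, and the only ones whose target is an identity are $\firstiso_\infty(\Sigma f)$ and $\secondiso_\infty(\Sigma f)$, which witness $f$, not $g$. So the witnesses for $g$ must be manufactured as in the proof of \cref{BiInvertibilitySetOfHalfInverses} --- and that construction requires a left inverse $x^L$ of an auxiliary composite $2$-cell $x$ of $\widehat{\omega\cE}$. Such an $x^L$ exists only if $x\in\bieq_2\widehat{\omega\cE}$, i.e.\ only if one already knows that the $2$-generators of $\widehat{\omega\cE}$ are bi-equivalences. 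That is essentially \cref{lem:all_generators_of_omegahat_are_bieq} and \cref{prop:all_cells_of_omegahat_are_equivalences}, which the paper deduces \emph{from} \cref{walkingBiinvertibility}; invoked at this point it is circular.

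The repair is to shrink $E$: a bi-invertibility set, unlike an invertibility set, is not required to contain the inverses $a^L,a^R$ themselves, only the witnessing $(n+1)$-cells. The closure of $\{f\}$ under the two operations $w\mapsto\firstiso_\infty(\Sigma w)$ and $w\mapsto\secondiso_\infty(\Sigma w)$ alone, with inverses assigned recursively by $f^L\coloneqq g$, $f^R\coloneqq g'$ and $(\phi(\Sigma w))^{L}\coloneqq\phi(\Sigma w^{L})$, $(\phi(\Sigma w))^{R}\coloneqq\phi(\Sigma w^{R})$ for $\phi\in\{\firstiso_\infty,\secondiso_\infty\}$, is already a bi-invertibility set, since $\phi(\Sigma c_w)$ and $\phi(\Sigma c'_w)$ witness $\phi(\Sigma w)$ whenever $c_w,c'_w\in E$ witness $w$. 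With that change your route goes through; note, though, that the paper's ``if'' direction is slicker and avoids computing anything inside $\widehat{\omega\cE}$: it shows directly that the set of cells of $\cD$ that factor through $\Sigma^{n-1}\widehat{\omega\cE}$ is a bi-invertibility set, the witnesses at the next level being $\tilde a\circ\Sigma^{n-1}(\firstiso_\infty)$ and $\tilde a\circ\Sigma^{n-1}(\secondiso_\infty)$ applied to $\Sigma^n f$; the fact that $f\in\bieq_1\widehat{\omega\cE}$ is then a corollary of the proposition rather than an input to it.
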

\begin{proof}
For each $n\geq0$ and $a \in \bieq_n\cD$, make a choice of $a^L, a^R\in\cD_{n}$ and of $c_a,c'_a \in \bieq_{n+1}\cD$
of the form
\[
    c_a \colon  a^L\underset{n-1}*a \to \id_{d^-_{n-1} a} \quad\text{ and }\quad c'_a \colon a\underset{n-1}* a^R \to \id_{d^+_{n-1} a}.
\]
By recursion on $k \geq 0$, we construct families of $\omega$-functors
\[
    \tilde{a}^{(k)}\colon \Sigma^{n-1}\widehat{\omega\cE}^{(k)} \to \cD
\]
parameterized by $n \geq0$ and $a \in \bieq_n\cD$, such that
\[\begin{tikzcd}
	{\cC_n} && \cD \\
	{\Sigma^{n-1}(\widehat{\omega\cE}^{(k)})}
	\arrow["{\Sigma^{n-1}f}"', from=1-1, to=2-1]
	\arrow["a", from=1-1, to=1-3]
	\arrow["{\tilde{a}^{(k)}}"', from=2-1, to=1-3]
\end{tikzcd}\]
commutes, and satisfying 
\begin{equation}
 \label{walking_bieq_compatibility_1e2}  
    \tilde{a}^{(k-1)}  = \tilde{a}^{(k)} \circ \Sigma^{n-1}(\imath_{k})\text{ and }
    [\tilde{c}_a^{(k-1)}, \tilde{c'}_a^{(k-1)}]  = \tilde{a}^{(k)} \circ [\Sigma^{n-1}(\firstiso_{k}), \Sigma^{n-1}(\secondiso_{k})]
\end{equation}
for all $k > 0$.
For each $n \in \mathbb{N}$ and $a \in \bieq_n\cD$, we let $\tilde{a}^{(1)}$ be defined by
\[
    \Sigma^{n-1}f \mapsto a, \quad \Sigma^{n-1}g \mapsto a^L, \quad \Sigma^{n-1}g' \mapsto a^R,
\]
and set $\tilde{a}^{(0)} \coloneqq \tilde{a}^{(1)} \circ \Sigma^{n-1}(\imath_{1})$.
Then the equality
\[[\tilde{c}_a^{(0)}, \tilde{c'}_a^{(0)}] = \tilde{a}^{(1)} \circ [\Sigma^{n-1}(\firstiso_{1}), \Sigma^{n-1}(\secondiso_{1})]\]
holds by construction.

Let $k > 1$, $n \in \mathbb{N}$, and $a \in \bieq_n\cD$.
By the inductive hypothesis, we have a commutative diagram in $\omega\cat$
\[
\begin{tikzcd}
	{\Sigma^n(\widehat{\omega\cE}^{(k-2)}) \amalg \Sigma^n(\widehat{\omega\cE}^{(k-2)})} &&&& {\Sigma^{n-1}(\widehat{\omega\cE}^{(k-1)})} \\
	{\Sigma^n(\widehat{\omega\cE}^{(k-1)}) \amalg \Sigma^n(\widehat{\omega\cE}^{(k-1)})} &&&& {\cD}.
	\arrow["{[\Sigma^{n-1}(\firstiso_{k-1}), \Sigma^{n-1}(\secondiso_{k-1})]}", from=1-1, to=1-5]
	\arrow["{\tilde{a}^{(k-1)}}", from=1-5, to=2-5]
	\arrow["{\Sigma^n(\imath_{k-1}) \amalg \Sigma^n(\imath_{k-1})}", from=1-1, to=2-1]
	\arrow["{[\tilde{c}_a^{(k-1)}, \tilde{c'}_a^{(k-1)}]}", from=2-1, to=2-5]
\end{tikzcd}
\]
Using the universal property of the pushout (\ref{omegacen_pushout}) and the fact that $\Sigma^{n-1}$ preserves pushouts by \cref{AdjointOfSuspension},
we see that this diagram induces a unique $\omega$-functor
\[\tilde{a}^{(k)}\colon \Sigma^{n-1}\widehat{\omega\cE}^{(k)} \to \cD\] satisfying \eqref{walking_bieq_compatibility_1e2}.
This completes the inductive step. 
Since $\Sigma^{n-1}$ preserves sequential colimits by \cref{AdjointOfSuspension}, for each $a \in \bieq_n \cD$, we obtain universally an $\omega$-functor $\tilde{a}\colon \Sigma^{n-1}(\widehat{\omega\cE}) \to \cD$ such that (\ref{walking_biequivalence_lifting}) commutes.

Conversely, for each $n\geq0$, let
    \[
        E_n \coloneqq \{ a \in \cD_n \mid \text{there exists $\tilde a\colon \Sigma^{n-1}\widehat{\omega\cE} \to \cD$ such that $a = \tilde{a} \circ \Sigma^{n-1} f$} \};
    \]
    we will show that $E \coloneqq \coprod_{n \geq 0} E_n$ is a bi-invertibility set.
    Let $a \in E_n$.
    By definition there exists $\tilde{a}$ such that $a = \tilde{a} \circ \Sigma^{n-1} f$.
    In particular, there are $(n+1)$-cells
    \[
        c \colon  a^L\underset{n-1}*a \to \id_{d^-_{n-1} a} \quad\text{ and }\quad c' \colon a\underset{n-1}* a^R \to \id_{d^+_{n-1} a}
    \]
    in the image of $\Sigma^{n-1}\widehat{\omega\cE}^{(2)}$ through $\tilde{a}$.
    Then
    \[
        \tilde{c} \coloneqq \tilde{a} \circ \Sigma^{n-1}(\firstiso_\infty)\colon \Sigma^n\widehat{\omega\cE} \to \cD \quad \text{and} \quad
        \tilde{c}' \coloneqq \tilde{a} \circ \Sigma^{n-1}(\secondiso_\infty)\colon \Sigma^n\widehat{\omega\cE} \to \cD
    \]
    are $\omega$-functors satisfying
    \[
        c = \tilde{c} \circ \Sigma^n f, \quad \quad c' = \tilde{c}' \circ \Sigma^n f.
    \]
    It follows that $c, c' \in E_{n+1}$.
    This completes the proof.
\end{proof}

\begin{rmk} 
\label{rmk:omegahat_is_polygraph}
By construction, $\widehat{\omega\cE}$ is a polygraph, whose set of $k$-cells is freely generated by the set $E_k$ defined, inductively on $k$, by
\begin{equation}
\label{eq:omegahat_is_polygraph}
    E_0 \coloneqq \{p, q\}, \; E_1 \coloneqq \{f, g, g'\}, \;
    E_k \coloneqq \firstiso_\infty (\Sigma E_{k-1}) \cup \secondiso_\infty (\Sigma E_{k-1})
    \text{ for $k > 1$.}
\end{equation}
\end{rmk}

\begin{lem} \label{lem:all_generators_of_omegahat_are_bieq}
    With reference to the notation of \eqref{eq:omegahat_is_polygraph}, let $n > 0$ and $a \in E_n$.
    Then $a\in\bieq_n\widehat{\omega\cE}$.
\end{lem}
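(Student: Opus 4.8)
The plan is to run an induction on $n$ whose inductive step is purely formal and whose only substantial input is the case $n=1$. The main tool is \cref{walkingBiinvertibility}: it says that $\bieq$ is corepresented, in the sense that a cell $a\in\widehat{\omega\cE}_n$ lies in $\bieq_n\widehat{\omega\cE}$ if and only if there is an $\omega$-functor $\tilde a\colon\Sigma^{n-1}\widehat{\omega\cE}\to\widehat{\omega\cE}$ with $\tilde a\circ\Sigma^{n-1}f=a$. Since, by \eqref{eq:omegahat_is_polygraph}, every generator of dimension $n\geq 2$ is of the form $\firstiso_\infty(\Sigma b)$ or $\secondiso_\infty(\Sigma b)$ for a generator $b$ of dimension $n-1$, the inductive step will just post-compose a witnessing $\omega$-functor for $b$ with $\firstiso_\infty$ or $\secondiso_\infty$. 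The only genuinely ``new'' generators are $f$, $g$, $g'$ in dimension $1$, and for those I would argue separately, using the closure result \cref{LeftAndRightInverseIsBiequivalence}.

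For the base case I would proceed in three sub-steps. First, $f\in\bieq_1\widehat{\omega\cE}$, because the identity $\omega$-functor satisfies $\id_{\widehat{\omega\cE}}\circ f=f$, so \cref{walkingBiinvertibility} applies. Second, since $\firstiso_\infty,\secondiso_\infty\colon\Sigma\widehat{\omega\cE}\to\widehat{\omega\cE}$ are $\omega$-functors, applying \cref{walkingBiinvertibility} with $n=2$ to $\tilde a=\firstiso_\infty$ and to $\tilde a=\secondiso_\infty$ gives $\firstiso_\infty(\Sigma f),\secondiso_\infty(\Sigma f)\in\bieq_2\widehat{\omega\cE}$. Unwinding the definitions of $\firstiso_1$ and $\secondiso_1$ from \cref{Jtruncated}, these cells have the shape
\[
\firstiso_\infty(\Sigma f)\colon g\underset{0}{*}f\to\id_{d^-_0 f}\qquad\text{and}\qquad\secondiso_\infty(\Sigma f)\colon f\underset{0}{*}g'\to\id_{d^+_0 f},
\]
so they exhibit $g$ as a left inverse and $g'$ as a right inverse of $f$, with $\omega$-bi-equivalence witnesses. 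Third, since $f\in\bieq_1\widehat{\omega\cE}$, \cref{LeftAndRightInverseIsBiequivalence} then yields $g,g'\in\bieq_1\widehat{\omega\cE}$. This settles $E_1\subseteq\bieq_1\widehat{\omega\cE}$.

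For the inductive step, I would fix $n\geq 2$ and $a\in E_n$, and use \eqref{eq:omegahat_is_polygraph} to write $a=\firstiso_\infty(\Sigma b)$ or $a=\secondiso_\infty(\Sigma b)$ for some $b\in E_{n-1}$. By the inductive hypothesis $b\in\bieq_{n-1}\widehat{\omega\cE}$, so \cref{walkingBiinvertibility} provides $\tilde b\colon\Sigma^{n-2}\widehat{\omega\cE}\to\widehat{\omega\cE}$ with $\tilde b\circ\Sigma^{n-2}f=b$. Using the functoriality of $\Sigma$ (so that $\Sigma\tilde b\circ\Sigma^{n-1}f=\Sigma(\tilde b\circ\Sigma^{n-2}f)=\Sigma b$ as cells of $\Sigma\widehat{\omega\cE}$), the composite $\firstiso_\infty\circ\Sigma\tilde b$, respectively $\secondiso_\infty\circ\Sigma\tilde b$, is an $\omega$-functor $\Sigma^{n-1}\widehat{\omega\cE}\to\widehat{\omega\cE}$ sending $\Sigma^{n-1}f$ to $\firstiso_\infty(\Sigma b)=a$, respectively $\secondiso_\infty(\Sigma b)=a$. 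Hence $a\in\bieq_n\widehat{\omega\cE}$ by \cref{walkingBiinvertibility}, which closes the induction.

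I expect the main obstacle to be the base case, not the induction. Everything in dimensions $\geq 2$ collapses mechanically onto the dimension-$1$ generators via $\firstiso_\infty$ and $\secondiso_\infty$, but $g$ and $g'$ are not images of suspension cells, so they genuinely require the extra input \cref{LeftAndRightInverseIsBiequivalence} together with the identification of $\firstiso_\infty(\Sigma f)$ and $\secondiso_\infty(\Sigma f)$ as bi-equivalence witnesses for $f$. The point worth checking carefully is the ordering of the sub-steps in the base case: one needs $\firstiso_\infty(\Sigma f),\secondiso_\infty(\Sigma f)\in\bieq_2\widehat{\omega\cE}$ before invoking \cref{LeftAndRightInverseIsBiequivalence} for $g,g'$, but this fact is exactly the $n=2$, $b=f$ instance of the uniform argument (with $\tilde b=\id_{\widehat{\omega\cE}}$) and uses nothing about $g,g'$, so no circularity arises. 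The remaining verification — reading off the domains and codomains of $\firstiso_\infty(\Sigma f)$ and $\secondiso_\infty(\Sigma f)$ from the formulas for $\firstiso_1,\secondiso_1$ — is routine.
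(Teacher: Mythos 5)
Your proposal is correct and follows essentially the same route as the paper's proof: the identity functor witnesses $f\in\bieq_1\widehat{\omega\cE}$ via \cref{walkingBiinvertibility}, then \cref{LeftAndRightInverseIsBiequivalence} handles $g,g'$, and the inductive step post-composes a witness for $e\in E_{n-1}$ with $\firstiso_\infty$ or $\secondiso_\infty$. Your extra sub-step checking that $\firstiso_\infty(\Sigma f)$ and $\secondiso_\infty(\Sigma f)$ are themselves bi-equivalences is a welcome explicit justification of a point the paper leaves implicit when asserting that $g,g'$ are left/right weak inverses of $f$.
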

\begin{proof}
    First, suppose that $n = 1$ and $a = f$.
    Then the classifying $\omega$-functor $f\colon \cC_1 \to \widehat{\omega\cE}$ factors as $\id_{\widehat{\omega\cE}} \circ f$
    as in 
    \[
    \begin{tikzcd}
	{\cC_1} && \widehat{\omega\cE} \\
	{\Sigma^{0}(\widehat{\omega\cE})}
	\arrow["{\Sigma^{0}f}"', from=1-1, to=2-1]
	\arrow["f", from=1-1, to=1-3]
	\arrow["{\id}"', from=2-1, to=1-3]
\end{tikzcd}
    \]
    So, by \cref{walkingBiinvertibility} $f\in\bieq_1\widehat{\omega\cE}$.
    If $a = g$ or $a = g'$, then $a$ is a left or right weak inverse of $f$, so by Proposition \ref{LeftAndRightInverseIsBiequivalence}, the $1$-morphism $a$ is also a biequivalence.
    
    Now, suppose that $n > 1$.
    Then there exists $e\in E_{n-1}$ such that $a = \firstiso_\infty(\Sigma e)$ or $a = \secondiso_\infty(\Sigma e)$, and by the inductive hypothesis $e\in\bieq_{n-1}\widehat{\omega\cE}$.
    By Proposition \ref{walkingBiinvertibility}, there exists $\tilde{e}\colon \Sigma^{n-2}\widehat{\omega\cE} \to \widehat{\omega\cE}$ such that $e$ factors
    as in
        \[
    \begin{tikzcd}
	{\cC_{n-1}} && \widehat{\omega\cE} \\
	{\Sigma^{n-2}(\widehat{\omega\cE})}
	\arrow["{\Sigma^{n-2}f}"', from=1-1, to=2-1]
	\arrow["e", from=1-1, to=1-3]
	\arrow["{\tilde{e}}"', from=2-1, to=1-3]
\end{tikzcd}
    \]
    Assume without loss of generality that $a = \firstiso_\infty(\Sigma e)$.
    Then, letting $\tilde{a} \coloneqq \firstiso_\infty \circ \Sigma \tilde{e}$, we have that
    \[
        a = \firstiso_\infty \circ \Sigma (\tilde{e} \circ \Sigma^{n-2}f) = (\firstiso_\infty \circ \Sigma \tilde{e}) \circ \Sigma(\Sigma^{n-2}f) = \tilde{a} \circ \Sigma^{n-1}f.
    \]
 so $a$ factors as in
    \[
        \begin{tikzcd}
	{\cC_{n}} && \widehat{\omega\cE} \\
	{\Sigma^{n-1}(\widehat{\omega\cE})}
	\arrow["{\Sigma^{n-1}f}"', from=1-1, to=2-1]
	\arrow["a", from=1-1, to=1-3]
	\arrow["{\tilde{a}}"', from=2-1, to=1-3]
\end{tikzcd}
    \]
 Hence, we conclude by \cref{walkingBiinvertibility} that $a\in\bieq_n\widehat{\omega\cE}$, as desired.
\end{proof}

\begin{prop} \label{prop:all_cells_of_omegahat_are_equivalences}
    Let $n > 0$ and $a \in (\widehat{\omega\cE})_n$.
    Then $a\in\bieq_n\widehat{\omega\cE}$.
\end{prop}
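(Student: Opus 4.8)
The plan is to obtain this from the closure properties of $\bieq$ established above, combined with the polygraph structure of $\widehat{\omega\cE}$. By \cref{rmk:omegahat_is_polygraph}, $\widehat{\omega\cE}$ is freely generated by the cellular data $(E_k)_{k\geq 0}$; the structural input I take from this is that every cell of $\widehat{\omega\cE}$ is obtained from the generators in $\bigcup_{k\geq 0}E_k$ by finitely many applications of the identity operators and the composition operators $*_k$. (This is a standard feature of free $\omega$-categories on polygraphs, and can also be read off the inductive pushout presentation of \cref{Jtruncated,constomegahat}.) Since we already know, by \cref{lem:all_generators_of_omegahat_are_bieq}, that the positive-dimensional generators are bi-equivalences, by \cref{IdentityIsBiequivalence} that identities are bi-equivalences, and by \cref{HorizontalCompositeOfBiequivalenceIsBiequivalence,VerticalCompositeOfBiequivalenceIsBiequivalence} that composites of bi-equivalences of equal dimension are again bi-equivalences, the statement should follow by a routine induction on the way a cell is built from generators.

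Concretely, I would argue by induction on the size of a composite expression representing a cell $a$, proving that whenever such an expression has dimension $n>0$ it represents an element of $\bieq_n\widehat{\omega\cE}$. One distinguishes three cases according to the outermost operator. If the expression is a single generator, i.e.\ $a\in E_n$, then $a\in\bieq_n\widehat{\omega\cE}$ by \cref{lem:all_generators_of_omegahat_are_bieq}. If $a=\id_b$ for a cell $b$ of dimension $n-1$, then $a\in\bieq_n\widehat{\omega\cE}$ by \cref{IdentityIsBiequivalence}; note that this case puts no constraint on $b$, so it also disposes of the $0$-dimensional leaves of the expression tree, which are of course never bi-equivalences themselves. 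Finally, if $a=b*_k a'$ with $a',b$ of dimension $n$ and $0\leq k<n$, then the sub-expressions $a'$ and $b$ again have dimension $n\geq 1$, so by the inductive hypothesis $a',b\in\bieq_n\widehat{\omega\cE}$, and hence $a\in\bieq_n\widehat{\omega\cE}$ by \cref{HorizontalCompositeOfBiequivalenceIsBiequivalence} when $k<n-1$ and by \cref{VerticalCompositeOfBiequivalenceIsBiequivalence} when $k=n-1$. These cases are exhaustive, so the induction closes and every cell of $\widehat{\omega\cE}$ of positive dimension is a bi-equivalence.

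I expect the only delicate point to be organizational rather than mathematical: the induction must be set up on the syntactic structure of the composite expression, not on the dimension, so that its base cases are exactly the generators --- handled by \cref{lem:all_generators_of_omegahat_are_bieq} --- while the identity and composition steps are handled by \cref{IdentityIsBiequivalence,HorizontalCompositeOfBiequivalenceIsBiequivalence,VerticalCompositeOfBiequivalenceIsBiequivalence}, and in particular never require the lower-dimensional cells over which one composes to be bi-equivalences. The one remaining thing to be careful about is what ``freely generated by $(E_k)_k$'' actually provides, namely the presentation of an arbitrary cell as such an iterated composite of generators and identities; this is precisely the content of the polygraph structure recorded in \cref{rmk:omegahat_is_polygraph}, and no further combinatorial analysis of $\widehat{\omega\cE}$ beyond \cref{lem:all_generators_of_omegahat_are_bieq} is needed.
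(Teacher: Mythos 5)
Your proposal is correct and follows essentially the same route as the paper: the paper likewise observes that the cells of $\widehat{\omega\cE}$ are composition-generated by the generators of \cref{rmk:omegahat_is_polygraph}, invokes \cref{lem:all_generators_of_omegahat_are_bieq} for the generators, and closes under composition via \cref{VerticalCompositeOfBiequivalenceIsBiequivalence,HorizontalCompositeOfBiequivalenceIsBiequivalence}. Your explicit handling of the identity case via \cref{IdentityIsBiequivalence} and the induction on expression size merely spells out details the paper leaves implicit.
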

\begin{proof}
    By \cref{rmk:omegahat_is_polygraph}, the cells of $\widehat{\omega\cE}$ are composition-generated, in the sense of \cite[Proposition 15.1.8]{PolyBook},
    by the cells in $E \coloneqq \coprod_{k \geq 0} E_k$.
    By Lemma \ref{lem:all_generators_of_omegahat_are_bieq}, all the generators are biequivalences, and by Proposition \ref{VerticalCompositeOfBiequivalenceIsBiequivalence} and Proposition \ref{HorizontalCompositeOfBiequivalenceIsBiequivalence}, biequivalences are closed under composition.
\end{proof}

The remainder of the paper is devoted to proving the following.

\begin{thm}
\label{omegaEcontractible}
    The unique $\omega$-functor
    $\widehat{\omega\cE}\to\cC_0$
    is a weak equivalence in $\omega\cat_{\mathrm{can}}$.
\end{thm}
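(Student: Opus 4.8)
The plan is to show that $\widehat{\omega\cE}$ is contractible by exhibiting it as a retract, up to categorical equivalence, of a point, using the theory of marked $\omega$-categories from \cite{HenryLoubaton}. Since $\widehat{\omega\cE}$ is a polygraph, hence cofibrant, it suffices to show that the map $\widehat{\omega\cE}\to\cC_0$ admits a homotopy inverse, or equivalently that $\widehat{\omega\cE}$ is \emph{oplax contractible} in a sense detectable on the marked level: one marks every cell of $\widehat{\omega\cE}$ (which by \cref{prop:all_cells_of_omegahat_are_equivalences} is legitimate, as every positive-dimensional cell is a biequivalence, hence an $\omega$-equivalence by \cref{prop:invertible is the same as biinvertible}) and shows the resulting marked $\omega$-category is equivalent to the marked point. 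The key input is that in the Henry--Loubaton model structure on marked $\omega$-categories, a marked $\omega$-category in which every cell is marked is fibrant, and a polygraph with every cell marked is cofibrant; the canonical model structure $\omega\cat_{\mathrm{can}}$ is then recovered as a suitable left Bousfield localization or via the forgetful/marking adjunction, so that contractibility of $\widehat{\omega\cE}$ in $\omega\cat_{\mathrm{can}}$ is equivalent to the marked version $\widehat{\omega\cE}^\natural \to \cC_0^\natural$ being a trivial cofibration in the marked model structure.

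First I would recall from \cite{HenryLoubaton} the characterization of weak equivalences between marked $\omega$-categories: a map is a weak equivalence precisely when it induces an equivalence on ``homotopy categories'' or, more usefully here, when it satisfies a lifting/saturation property against the generating (trivial) cofibrations. Then I would exhibit an explicit deformation retraction of $\widehat{\omega\cE}^\natural$ onto one of its objects, say $p$. The natural candidate is built from the cells $f$, $g$, $g'$ and the higher coherence cells $\firstiso_\infty(\Sigma e), \secondiso_\infty(\Sigma e)$ that were put in precisely to witness, coherently, that $g\ast_0 f \simeq \id_p$ and $f\ast_0 g'\simeq \id_q$. Concretely, I would produce an oplax transformation (a marked $\omega$-functor out of $\widehat{\omega\cE}^\natural \times \cC_1^\natural$, or out of a marked cylinder object) from $\id_{\widehat{\omega\cE}}$ to the constant functor at $p$, using the structure recursively: at stage $k$ the cells of $\widehat{\omega\cE}^{(k)}$ are pushouts of suspensions, and the homotopy data assembles by the universal properties in \cref{Jtruncated,constomegahat}, exactly as in the proof of \cref{walkingBiinvertibility}. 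Since all cells are marked, such an oplax homotopy automatically upgrades to a genuine homotopy equivalence in the marked model structure.

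The main obstacle, and the step that requires real work, is constructing the homotopy coherently all the way up the tower and checking it is compatible with the marking --- i.e., that the cylinder $\widehat{\omega\cE}^\natural \times \cC_1^\natural$ (or the chosen cofibrant cylinder object) maps into $\widehat{\omega\cE}^\natural$ respecting marked cells, which forces one to verify that every cell appearing in the homotopy is itself an $\omega$-equivalence. This is where one genuinely uses that $\widehat{\omega\cE}$ was designed as the ``walking'' coherent equivalence: the recursive pushout presentation should make the homotopy at level $k$ canonically determined by the homotopy at level $k-1$ together with the generators $\firstiso_k,\secondiso_k$, and \cref{AdjointOfSuspension} (preservation of connected colimits and pushouts by suspension) lets one pass to the colimit $\widehat{\omega\cE}$. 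A secondary technical point is to make precise the comparison between the marked model structure of \cite{HenryLoubaton} and $\omega\cat_{\mathrm{can}}$ of \cite{LMW}: one needs that the ``mark everything'' functor sends $\omega\cat_{\mathrm{can}}$-equivalences between polygraphs to marked equivalences and reflects contractibility, so that establishing $\widehat{\omega\cE}^\natural\simeq \cC_0^\natural$ on the marked side yields \cref{omegaEcontractible}. I expect the bulk of the remaining paper to set up this marked-categorical machinery and then carry out the inductive construction of the contracting homotopy.
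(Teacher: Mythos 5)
Your overall framing---pass to marked $\omega$-categories, use that every positive-dimensional cell of $\widehat{\omega\cE}$ is an $\omega$-equivalence (\cref{prop:all_cells_of_omegahat_are_equivalences,prop:invertible is the same as biinvertible}), and detect contractibility on the marked side via \cref{thm:model structure on marked omega categories}\ref{MarkFibWE}---matches the paper's strategy. But the core of your argument, the explicit ``oplax deformation retraction'' of $\widehat{\omega\cE}^\natural$ onto a point built from a cylinder $\widehat{\omega\cE}^\natural\times\cC_1^\natural$, is exactly the step you defer as ``requiring real work,'' and it is the entire difficulty of the theorem: constructing such a homotopy coherently in all dimensions is not easier than proving contractibility outright. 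Moreover, $\omegacat^+_{\mathrm{coind}}$ is only a left \emph{semi}-model structure, so a cartesian or Gray product with $\cC_1^\natural$ is not automatically a usable cylinder object, and the claim that an ``oplax homotopy automatically upgrades to a genuine homotopy equivalence'' is asserted rather than proved. As written, the proposal is a plan whose central step is missing.

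The paper's proof constructs no homotopy at all. The generating acyclic cofibrations of $\omegacat^+_{\mathrm{coind}}$ are (up to localization) the \emph{equation inclusions} of \cite[Definition~3.1]{HenryLoubaton}---``freely attach a weak inverse together with \emph{marked} witnessing cells''---and the tower of \cref{Jtruncated} is built from a point by precisely such attachments. So the paper builds a marked lift $(\overline{\omega\cE},t\overline{\omega\cE})$ of the tower (\cref{TruncatedOverlineGuy,omegaEoverline}) in which only the attached witnesses are marked, making each stage a pushout of acyclic cofibrations; this yields that $\cC_0^{\sharp}\to(\overline{\omega\cE},t\overline{\omega\cE})$ is a transfinite composite of acyclic cofibrations (\cref{lemma:an important acyclic cofibration,lemma:second canonical weak equivalence,lemma:third canonical weak equivalence}), and, separately, that passing from this partial marking to the full marking $\widehat{\omega\cE}^{\natural}$ is itself an acyclic cofibration (\cref{lemma:an other important acyclic cofibration,lemma:first canonical weak equivalence}). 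This last point is one your proposal glosses over: if you mark everything from the outset, the defining pushout squares are no longer pushouts of generating acyclic cofibrations and the induction does not run; the minimal marking is essential, and the comparison with $\natural$ is a separate lemma. To repair your argument, replace the explicit contracting homotopy by the lifting-property characterization of acyclic cofibrations together with \cref{SuspensionHomotopical} and closure under pushout and transfinite composition; that is what turns the inductive step into a formal argument instead of an infinite coherence problem.
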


\section{The marked model for the coherent \omegat-equivalence}

\subsection{Marked \omegat-categories}

We briefly recall some notions on marked $\omega$-categories from \cite[\textsection2]{HenryLoubaton} that will be needed in this paper.

A \emph{marked $\omega$-category} is a pair $(\cD,t\cD)$ where $\cD$ is an $\omega$-category and $t\cD\coloneqq \coprod_{n>0}t\cD_{n}$ is a sequence of sets such that for any $n>0$, the set $t\cD_n$ is a subset of $\cD_n$ containing identities and closed under composition.  The $\omega$-category $\cD$ is called \emph{the underlying $\omega$-category} and $t\cD$ \emph{the marking} of $\cD$. A cell in $t\cD$ is called \emph{marked}. A \emph{marked $\omega$-functor} $F\colon(\cD,t\cD)\to(\cE,t\cE)$ consists of a marking-preserving $\omega$-functor $F\colon\cD\to\cE$. We denote $\omegacat^+$ the category of marked $\omega$-categories and marked $\omega$-functors. The assignment $(\cD,t\cD)\mapsto\cD$ of the underlying $\omega$-category of any marked $\omega$-category defines a forgetful functor $U\colon\omegacat^+\to\omega\cat$.

\begin{notn}
Given an $\omega$-category $\cD$, one can consider various choices of interest for the marking on $\cD$:
\begin{itemize}[leftmargin=*, label={>>}]
    \item If $\id\cD$
    denotes the set of identities of $\cD$, the class $\id\cD$ is closed under composition and contains identities. So, $(\cD,\id\cD)\eqqcolon\cD^{\flat}$ is a marked $\omega$-category. The assignment $\cD\mapsto\cD^{\flat}$ defines a functor $(-)^{\flat}\colon\omega\cat\to\omegacat^+$. 
    \item If $\mor\cD$ denotes the set of cells of $\cD$ of strictly positive dimension, the class $\mor\cD$ is closed under composition and contains identities. So, $(\cD,\mor\cD)\eqqcolon\cD^{\sharp}$ is a marked $\omega$-category. The assignment $\cD\mapsto\cD^{\sharp}$ defines a functor $(-)^{\sharp}\colon\omega\cat\to\omegacat^+$.
     \item If $\operatorname{eq}\cD$ denotes the set of $\omega$-equivalences of $\cD$ as in \cref{Equivalence},
     by \cite[Lemma 20.1.4]{PolyBook} the class $\eq\cD$ is closed under composition and contains identities. So, $(\cD,\eq\cD)\eqqcolon\cD^{\natural}$ is a marked $\omega$-category. By \cref{walkingBiinvertibility}, the assignment $\cD\mapsto\cD^{\natural}$ defines a functor $(-)^{\natural}\colon\omega\cat\to\omegacat^+$.
\end{itemize}
\end{notn}

The following adjoint pairs can be checked by verifying the appropriate universal properties, and using \cref{AdjointOfSuspension} for the second one.

\begin{prop}
\label{AdjointsOfU}
There are adjunctions
\[(-)^{\flat}\colon\omega\cat\rightleftarrows\omegacat^+ :\! U\quad\text{ and }\quad U\colon\omegacat^+\rightleftarrows\omega\cat :\!(-)^{\sharp}.\]
In particular, the functor $U\colon\omegacat^+\to\omega\cat$ preserves limits and colimits.
\end{prop}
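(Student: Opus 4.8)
The plan is to exhibit both adjunctions by constructing the defining natural bijections on hom-sets directly, and then to read off the preservation of limits and colimits formally. The entire content is that the two \emph{extreme} markings $\id\cD$ and $\mor\cE$ trivialize the marking-compatibility condition on marked $\omega$-functors.

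For the first adjunction $(-)^{\flat}\dashv U$, I would fix an $\omega$-category $\cD$ and a marked $\omega$-category $(\cE,t\cE)$ and produce a natural bijection
\[
\Hom_{\omegacat^+}(\cD^{\flat},(\cE,t\cE))\;\cong\;\Hom_{\omega\cat}(\cD,\cE).
\]
An element of the left-hand side is an $\omega$-functor $F\colon\cD\to\cE$ carrying the marking $\id\cD$ of $\cD^{\flat}$ into $t\cE$. Since every $\omega$-functor preserves identities and, by the definition of a marking, $t\cE$ contains all identities, this compatibility condition is automatically satisfied. Hence the assignment $F\mapsto F$ is a bijection onto $\Hom_{\omega\cat}(\cD,\cE)$, and naturality in both $\cD$ and $(\cE,t\cE)$ is immediate because both hom-sets are identified with the same underlying $\omega$-functors.

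For the second adjunction $U\dashv(-)^{\sharp}$, I would symmetrically fix $(\cD,t\cD)$ and an $\omega$-category $\cE$ and produce
\[
\Hom_{\omegacat^+}((\cD,t\cD),\cE^{\sharp})\;\cong\;\Hom_{\omega\cat}(\cD,\cE).
\]
An element of the left-hand side is an $\omega$-functor $F\colon\cD\to\cE$ carrying $t\cD$ into the marking $\mor\cE$ of $\cE^{\sharp}$. Since $t\cD\subseteq\coprod_{n>0}\cD_n$ consists of cells of strictly positive dimension and $F$ is dimension-preserving, the images lie in $\coprod_{n>0}\cE_n=\mor\cE$, so the compatibility condition is again vacuous. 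Thus $F\mapsto F$ furnishes the bijection, with naturality checked exactly as before.

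Finally, I would observe that the concluding assertion is purely formal: the two adjunctions display $U$ simultaneously as a right adjoint (to $(-)^{\flat}$) and as a left adjoint (to $(-)^{\sharp}$), so it preserves all limits and all colimits, respectively. I do not expect any genuine obstacle here; the only points needing (routine) care are the functoriality of $(-)^{\flat}$ and $(-)^{\sharp}$, already recorded when these functors were defined, and the naturality of the two bijections, which follows at once from the fact that on underlying data each bijection is the identity on the corresponding $\omega$-functor.
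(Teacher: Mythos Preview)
Your proof is correct and matches the paper's approach: the paper simply states that these adjunctions ``can be checked by verifying the appropriate universal properties,'' and your argument carries out exactly that verification by showing the marking-compatibility condition is vacuous for the two extreme markings. The final clause on limits and colimits is, as you note, purely formal.
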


\begin{prop}
\label{MarkedAdjointOfSuspension}
If $\omegacat^+_{*,*}$ denotes the category of bipointed marked $\infty$-categories,
there is an adjunction
\[\Sigma\colon\omegacat^+\rightleftarrows\omegacat^+_{*,*} :\!\hom\]
Moreover,
the functor $\Sigma\colon\omegacat^+\to\omegacat^+$ preserves connected colimits.
\end{prop}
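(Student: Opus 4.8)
The plan is to lift the unmarked suspension--hom adjunction of \cref{AdjointOfSuspension} to the marked setting by a direct check on markings, and then to deduce preservation of connected colimits formally, exactly as in the unmarked case. First I would pin down the two marked structures. For a marked $\omega$-category $(\cD,t\cD)$, I equip the suspension $\Sigma\cD$ with its two objects as basepoints and with the marking generated, under identities and composition, by the set $\{\Sigma a\mid a\in t\cD\}$; this is a legitimate marking and, since marked functors send marked cells to marked cells, $\Sigma$ becomes a functor $\omegacat^+\to\omegacat^+_{*,*}$. Dually, for a bipointed marked $\omega$-category $(\cE,x,y)$ I equip $\hom_\cE(x,y)$ with the marking consisting of those cells whose corresponding cell of $\cE$ lies in $t\cE$; this is closed under composition and contains identities because $t\cE$ is, so $\hom$ is a functor $\omegacat^+_{*,*}\to\omegacat^+$. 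By construction both $\Sigma$ and $\hom$ are compatible with the forgetful functors to the unmarked categories.

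Next I would establish the adjunction bijection
\[
\omegacat^+_{*,*}(\Sigma\cD,\cE)\cong\omegacat^+(\cD,\hom_\cE(x,y))
\]
by showing that the unmarked bijection of \cref{AdjointOfSuspension} restricts to marking-preserving functors on both sides. Given a bipointed marked $\omega$-functor $F\colon\Sigma\cD\to\cE$, its underlying $\omega$-functor corresponds under \cref{AdjointOfSuspension} to an $\omega$-functor $G\colon\cD\to\hom_\cE(x,y)$ with $G(a)=F(\Sigma a)$. Since the marking of $\Sigma\cD$ is generated under composition and identities by $\{\Sigma a\mid a\in t\cD\}$ and every $\omega$-functor automatically preserves composites and identities, $F$ preserves marking if and only if $F(\Sigma a)$ is marked in $\cE$ for every $a\in t\cD$; by the definition of the marking on $\hom_\cE(x,y)$ this is precisely the condition that $G(a)$ be marked for every $a\in t\cD$, i.e.\ that $G$ preserve marking. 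Hence the bijection restricts as claimed, and its naturality is inherited from the unmarked case.

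For the second assertion I would argue purely formally. By the adjunction just established, $\Sigma\colon\omegacat^+\to\omegacat^+_{*,*}$ is a left adjoint and therefore preserves all colimits. The category $\omegacat^+_{*,*}$ is the double coslice of $\omegacat^+$ under $(\cC_0\amalg\cC_0)^\flat$ (whose marking is necessarily empty), and the forgetful functor $\omegacat^+_{*,*}\to\omegacat^+$ discarding the two basepoints creates, in particular preserves, connected colimits by the standard argument based on \cite[Proposition 2.9]{HirschhornOvercategories}. Composing, for a connected indexing category $\cI$ the functor $\Sigma$ sends a colimit over $\cI$ to a colimit in $\omegacat^+_{*,*}$, and the forgetful functor then preserves it because $\cI$ is connected; hence $\Sigma\colon\omegacat^+\to\omegacat^+$ preserves connected colimits, in complete parallel with the unmarked proposition.

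The only step requiring genuine care is the marking bookkeeping of the second paragraph: one must be sure that the marking on $\Sigma\cD$ is \emph{exactly} the one generated by the suspended marked cells, so that preservation of marking by $F$ is controlled by its values on the generators $\Sigma a$, and that the inherited marking on $\hom_\cE(x,y)$ is well defined and closed under composition. Once this compatibility is pinned down, both halves of the statement follow formally from \cref{AdjointOfSuspension} and the coslice argument, so I do not expect any deeper obstacle.
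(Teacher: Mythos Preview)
Your proposal is correct and matches the paper's approach exactly: the paper gives no explicit proof, merely remarking that the adjunction can be checked by verifying the appropriate universal property and using \cref{AdjointOfSuspension}, which is precisely what you do in detail. Your marking bookkeeping agrees with the paper's explicit description of $\Sigma(\cD,t\cD)$, and your coslice argument for connected colimits is the intended ``standard argument based on \cite[Proposition 2.9]{HirschhornOvercategories}''.
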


\subsection{The coinductive homotopy theory of marked \omegat-categories}

We recall that a \emph{left semi-model category structure} on a category $\cM$ consists of three distinguished classes of morphisms of $\cM$, called \emph{cofibrations}, \emph{fibrations}, and \emph{weak equivalences}, satisfying a weaker version of the axioms for a model category.
We refer the reader to \cite[Definition~2.1]{BataninWhiteLoc} for a complete list of axioms that these classes must satisfy. An object in $\cM$ is said to be \emph{fibrant} if the unique morphism to the terminal object of $\cM$ is a fibration, and it is said to be \emph{cofibrant} if the unique morphism from the initial object of $\cM$ is a cofibration. The class of \emph{acyclic cofibrations} is the class of morphisms in $\cM$ that have the left lifting property with respect to all fibrations between fibrant objects. In a left semi-model structure, the class of acyclic cofibrations is closed under transfinite composition and pushouts and the class of weak equivalences is closed under two-out-of-three.

\begin{thm}[{\cite[\textsection4.2]{HenryLoubaton}}]
\label{thm:model structure on marked omega categories}
There exists a left semi-model structure on $\omegacat^+$, which we denote by $\omegacat^+_{\mathrm{coind}}$ and we call the \emph{coinductive left semi-model structure}, such that:
\begin{enumerate}[leftmargin=*, ref=(\arabic*)]
    \item \label{MarkCofib} a marked $\omega$-functor $f\colon (\cD,t\cD)\to (\cE,t\cE)$ is a cofibration in $\omegacat^+_{\mathrm{coind}}$ if and only if 
the $\omega$-functor  $f\colon \cD\to \cE$ is a cofibration in $\omega\cat_{\mathrm{can}}$;
    \item \label{MarkCofibCofibWE}
    a cofibration $f\colon (\cD,t\cD)\to (\cE,t\cE)$ between cofibrant objects is a weak equivalence in $\omegacat^+_{\mathrm{coind}}$ if and only if it is an acyclic cofibration, that is, it has the left lifting property against fibrations between fibrants objects;
    \item\label{MarkFibObj} a marked $\omega$-category $(\cD,t\cD)$ is fibrant in $\omegacat^+_{\mathrm{coind}}$ if and only if $t\cD=\eq\cD$;
    \item \label{MarkFibWE} a marked $\omega$-functor $f\colon \cD^{\natural}\to \cE^{\natural}$
    between fibrant objects is a weak equivalence in $\omegacat^+_{\mathrm{coind}}$ if and only the $\omega$-functor $f\colon \cD\to \cE$ is a weak equivalence in $\omega\cat_{\mathrm{can}}$;
    \item \label{MarkFibFib} a marked $\omega$-functor $f\colon \cD^{\natural}\to \cE^{\natural}$ between fibrant objects is a fibration in $\omegacat^+_{\mathrm{coind}}$ if and only if it has the right lifting property against the marked $\infty$-functors of the form $i_n^+\colon\cC_n^{\flat}\to(\cC_{n+1},\{e_{n+1}\}\cup\id(\cC_{n+1}))$ for all $n\geq0$.
    Here, $e_{n+1}$ denotes the non-trivial $(n+1)$-cell of $\cC_{n+1}$ and $i_n^+$ denotes the marked $\omega$-functor that embeds $\cC_n$ as the codomain of $e_{n+1}$.
\end{enumerate}
\end{thm}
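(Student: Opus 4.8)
The plan is to follow the method of \cite[\textsection4.2]{HenryLoubaton}: fix the cofibrations, produce one weak factorization system by a routine small object argument, define the fibrations \emph{coinductively on dimension}, and then verify the axioms of \cite[Definition~2.1]{BataninWhiteLoc}. For the cofibrations I would take the generating set
\[
I \coloneqq \{(\partial\cC_n)^{\flat}\hookrightarrow\cC_n^{\flat}\}_{n\geq0}\;\cup\;\{\cC_n^{\flat}\to(\cC_n,\{e_n\}\cup\id\cC_n)\}_{n>0};
\]
since $(-)^{\flat}$ is a left adjoint (\cref{AdjointsOfU}) and the cofibrations of $\omega\cat_{\mathrm{can}}$ are generated by the boundary inclusions, the saturation of $I$ is exactly the class of marked functors whose underlying $\omega$-functor is a cofibration, which is property~\ref{MarkCofib}. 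The small object argument applied to $I$ then yields the $(\text{cofibration},\,\text{trivial fibration})$ factorization, the trivial fibrations being the maps with the right lifting property against $I$; unwinding this against the two families in $I$ shows a trivial fibration is precisely an underlying trivial fibration of $\omega\cat_{\mathrm{can}}$ that moreover reflects the marking.

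The delicate half is the other weak factorization system. Its generators on the lifting side are the maps $i_n^+\colon\cC_n^{\flat}\to(\cC_{n+1},\{e_{n+1}\}\cup\id\cC_{n+1})$, which encode the lifting of a marked $(n+1)$-cell along a prescribed codomain; but because the ``walking coherent equivalence'' is not available in advance (indeed producing it is the whole point of the paper), the class of fibrations cannot be generated by a mere set and must instead be defined as a greatest fixed point. The engine for this is the suspension--hom adjunction of \cref{MarkedAdjointOfSuspension}: it transposes any lifting problem about $(n+1)$-cells against a map $p\colon X\to Y$ into a lifting problem in the marked hom-$\omega$-categories $\hom_X(a,b)\to\hom_Y(pa,pb)$ one dimension down. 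I would accordingly define $p$ to be a \emph{fibration} by the coinductive clause that $p$ has the right lifting property against the $i_n^+$ and that each induced map on hom-objects is again a fibration, and check that this greatest-fixed-point class is closed under pullback, transfinite composition, and retracts.

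The characterizations~\ref{MarkFibObj} and~\ref{MarkFibFib} now come from matching this coinductive clause against the coinductive description of $\eq\cD$ as the maximal invertibility set (\cref{CoinductiveDefinitionEquivalences}, \cref{InvertibilitySetOfEquivalences}). An object is fibrant when $(\cD,t\cD)\to 1$ is a fibration; unwinding the two clauses dimension by dimension forces, on the one hand, every marked cell to possess a weak inverse together with marked invertibility witnesses and, on the other, every such equivalence to be marked, so that the ``marked'' and ``equivalence'' fixed points coincide and $t\cD=\eq\cD$ --- this is property~\ref{MarkFibObj}, where \cref{prop:invertible is the same as biinvertible} lets me pass freely between one- and two-sided witnesses. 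When both $\cD^{\natural}$ and $\cE^{\natural}$ are already fibrant the hom-descent clause is automatically satisfied, so being a fibration collapses to right lifting against the $i_n^+$ alone, giving property~\ref{MarkFibFib}.

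Finally I would define a map to be a weak equivalence iff, after a functorial fibrant replacement, its underlying $\omega$-functor is a weak equivalence of $\omega\cat_{\mathrm{can}}$; the fibrant replacement is built by transfinitely attaching the missing inverse-and-witness data to marked cells and then marking all equivalences. Property~\ref{MarkFibWE} is then nearly definitional, and property~\ref{MarkCofibCofibWE} follows by a retract argument once both factorizations are in place. The main obstacle is precisely the analysis of this fibrant-replacement tower: one must prove that attaching an inverse with its witnesses to a marked cell, and enlarging the marking to all equivalences, is an acyclic cofibration --- i.e.\ leaves the underlying homotopy type unchanged --- and that the tower converges. I expect this to be carried out by the same dimension-descending coinduction, using \cref{MarkedAdjointOfSuspension} to reduce an $(n+1)$-dimensional completion to an $n$-dimensional one inside a hom-object and the closure of $\bieq\cD$ under composition and inverses to keep each stage within the equivalences; reconciling this coinductive descent with the transfinite construction and with two-out-of-three is where essentially all the work lies, after which the remaining axioms of \cite[Definition~2.1]{BataninWhiteLoc} are formal.
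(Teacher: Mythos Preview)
Your proposal misreads what the paper is doing here. This theorem carries the citation \cite[\textsection4.2]{HenryLoubaton} in its header, and the paper's proof is not a construction at all: it simply records where each clause is established in the literature. The model structure $\omegacat^+_{\mathrm{coind}}$ is built in \cite{HenryLoubaton} as a chain of two left Bousfield localizations --- first the \emph{inductive} left semi-model structure, then its \emph{saturated} variant, then the coinductive one --- and the five properties are read off from specific results there (and from \cite{HenryWeakmodelstructure}, \cite{HenryCombinatorialAccessible}). The paper's proof is four sentences of citations.

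Your approach, by contrast, attempts to rebuild the model structure from scratch by a direct coinductive definition of fibrations. This is a genuinely different strategy from the Bousfield-localization route, and it carries a real risk: your claim that ``the class of fibrations cannot be generated by a mere set and must instead be defined as a greatest fixed point'' is not how \cite{HenryLoubaton} proceeds, and it is not clear that a bare coinductive clause on fibrations yields a weak factorization system at all. The Bousfield-localization machinery of \cite{BataninWhiteLoc} supplies the factorizations and the axioms formally once one has the base (inductive) structure and a set of maps to localize at; your sketch replaces this with an ad hoc fibrant-replacement tower whose convergence and compatibility with two-out-of-three you yourself flag as ``where essentially all the work lies.'' That work is exactly what the localization framework is designed to absorb. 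If you want to reprove the theorem rather than cite it, the cleaner path is to follow the two-step localization and then extract \ref{MarkFibObj}--\ref{MarkFibFib} from the general fact that fibrations between local objects in a Bousfield localization coincide with those of the base structure.
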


\begin{proof}
The left semi-model structure $\omegacat^+_{\mathrm{coind}}$ is built in \cite[Definition.~4.22]{HenryLoubaton} as a left Bousfield localization (in the sense of \cite[Theorem A]{BataninWhiteLoc}) of the \emph{saturated inductive left semi-model structure} from \cite[Theorem~3.31]{HenryLoubaton}. 
The saturated inductive left semi-model structure is in turn built as a left Bousfield localization of the \emph{inductive left semi-model structure} from \cite[Theorem~2.38]{HenryLoubaton}.

The characterization \ref{MarkCofib} of cofibrations  directly follows from \cite[Definition~2.27]{HenryLoubaton}.
The characterization \ref{MarkCofibCofibWE} of cofibrations between cofibrant objects that are weak equivalences  follows from \cite[Proposition 2.2.10]{HenryWeakmodelstructure}.
The characterization \ref{MarkFibObj} of fibrant objects and the characterization \ref{MarkFibWE} of weak equivalences between fibrant objects are in \cite[Theorem 4.25]{HenryLoubaton}.
The characterization \ref{MarkFibFib} of fibrations between fibrant objects then directly follows from \cite[Proposition~3.23]{HenryLoubaton}, evoking \cite[Theorem~7.3(6)]{HenryCombinatorialAccessible}
for the fact that a map between fibrant objects in the left Bousfield localization $\omega\cat_{\mathrm{coind}}$ is a fibration if and only if it is one in the inductive left semi-model structure.
\end{proof}

\begin{lem}
\label{lemma:an other important acyclic cofibration}
Given a marked $\omega$-category $(\cE,t\cE)$ with $t\cE\subseteq \eq\cE$, the canonical morphism
\[(\cE,t\cE)\hookrightarrow \cE^{\natural}\]
is an acyclic cofibration in $\omegacat^+_{\mathrm{coind}}$.
\end{lem}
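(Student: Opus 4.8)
The plan is to verify directly that $i\colon(\cE,t\cE)\hookrightarrow\cE^{\natural}$ satisfies the definition of an acyclic cofibration recalled above, namely that it has the left lifting property against every fibration between fibrant objects of $\omegacat^+_{\mathrm{coind}}$. Along the way I would also observe that $i$ is a cofibration, since its underlying $\omega$-functor is the identity $\omega$-functor of $\cE$, which is a cofibration in $\omega\cat_{\mathrm{can}}$, so that the characterization of cofibrations in \cref{thm:model structure on marked omega categories} applies.

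For the lifting property itself, recall from the characterization of fibrant objects in \cref{thm:model structure on marked omega categories} that a fibrant object of $\omegacat^+_{\mathrm{coind}}$ is precisely one of the form $\cX^{\natural}$; hence an arbitrary fibration between fibrant objects has the shape $p\colon\cX^{\natural}\to\cY^{\natural}$. The key point I would establish is that a target of the form $\cX^{\natural}$ cannot see the difference between the markings $t\cE$ and $\eq\cE$. Indeed, every $\omega$-functor preserves $\omega$-equivalences --- this is exactly what makes $(-)^{\natural}$ a functor, via \cref{walkingBiinvertibility} --- and $t\cE\subseteq\eq\cE$ by hypothesis, so a marked $\omega$-functor $(\cE,t\cE)\to\cX^{\natural}$ is the same datum as a marked $\omega$-functor $\cE^{\natural}\to\cX^{\natural}$: in each case the marking-preservation condition is automatic, and the datum is simply an $\omega$-functor $\cE\to\cX$ of underlying $\omega$-categories. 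Therefore, given a commutative square with top edge $u\colon(\cE,t\cE)\to\cX^{\natural}$ and bottom edge $v\colon\cE^{\natural}\to\cY^{\natural}$, I would take as diagonal filler the marked $\omega$-functor $w\colon\cE^{\natural}\to\cX^{\natural}$ whose underlying $\omega$-functor is that of $u$; the upper triangle $w\circ i=u$ commutes because both sides are marked $\omega$-functors into $\cX^{\natural}$ with the same underlying $\omega$-functor, and a marked $\omega$-functor is determined by its underlying $\omega$-functor, while the lower triangle $p\circ w=v$ commutes for the same reason together with commutativity of the given square.

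I do not expect a substantial obstacle: the whole content is the observation that, because $t\cE\subseteq\eq\cE$, replacing the marking $t\cE$ by the maximal marking $\eq\cE$ is invisible to marked $\omega$-functors into a fibrant object. The only things requiring care are two bookkeeping facts --- that a fibration between fibrant objects has both source and target of the form $(-)^{\natural}$, and that equality of marked $\omega$-functors into a common codomain may be checked on underlying $\omega$-functors --- and both are immediate from the definition of a marked $\omega$-category together with \cref{thm:model structure on marked omega categories} and the functoriality of $(-)^{\natural}$.
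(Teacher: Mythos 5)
Your argument is correct and is essentially the paper's own proof: the paper likewise solves the lifting problem against a fibration $p\colon\cB^{\natural}\to\cD^{\natural}$ between fibrant objects by taking the lift to be the top map at the level of underlying $\omega$-categories, which is automatically marking-preserving into $\cB^{\natural}$ because $(-)^{\natural}$ is a functor (i.e.\ $\omega$-functors preserve $\omega$-equivalences). Your additional remarks (that the underlying map is the identity, hence a cofibration, and that equality of marked $\omega$-functors can be checked on underlying $\omega$-functors) are correct bookkeeping that the paper leaves implicit.
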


\begin{proof}
In order to show that $(\cE,t\cE)\to \cE^{\natural}$ has the left lifting property with respect to any fibration between fibrant objects $p\colon\cB^{\natural}\to \cD^{\natural}$ in $\omegacat^+_{\mathrm{coind}}$, consider the following lifting problem in $\omegacat^+$:
\[
\begin{tikzcd}
(\cE,t\cE)\arrow[r]\arrow[d]&\cB^{\natural}\arrow[d]\\
\cE^{\natural}\arrow[r]\arrow[ru,dashed]&\cD^{\natural}
\end{tikzcd}\]
A lift exists (because $(-)^{\natural}\colon\omega\cat\to\omegacat^+$ is a functor), and is necessarily given by the top map at the level of underlying categories.
It follows that $(\cE,t\cE)\to \cE^{\natural}$ is an acyclic cofibration in $\omegacat^+_{\mathrm{coind}}$, as desired.
\end{proof}

\begin{notn}
Given a marked $\infty$-category $(\cD,t\cD)$, we denote by $\Sigma(\cD,t\cD)\coloneqq(\Sigma\cD,\{\Sigma a,a\in t\cD\}\cup \id(\Sigma \cD))$ the \emph{marked suspension} of $(\cD,t\cD)$.
\end{notn}

\begin{rmk}
\label{SigmaAndU}
By definition, given a marked $\infty$-category $(\cD,t\cD)$, there is a canonical isomorphism in $\omega\cat$
\[
U\Sigma(\cD,t\cD)\cong \Sigma\cD\cong \Sigma U(\cD,t\cD).
\]
\end{rmk}

\begin{prop}
\label{SuspensionHomotopical}
The functor
$\Sigma\colon\omegacat^+_{\mathrm{coind}}\to \omegacat^+_{\mathrm{coind}}$
preserves
acyclic cofibrations.
\end{prop}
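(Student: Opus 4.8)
The plan is to reduce the statement, via the marked suspension–hom adjunction $\Sigma\dashv\hom$ of \cref{MarkedAdjointOfSuspension}, to a statement about the right adjoint $\hom$. Recall that the acyclic cofibrations of $\omegacat^+_{\mathrm{coind}}$ are exactly the marked $\omega$-functors with the left lifting property against all fibrations between fibrant objects. So it suffices to establish the following claim: for every fibration between fibrant objects $p\colon\cX^{\natural}\to\cY^{\natural}$ in $\omegacat^+_{\mathrm{coind}}$ and every pair of objects $x_-,x_+\in\cX$, setting $y_\pm\coloneqq p(x_\pm)$, the induced marked $\omega$-functor $\hom_{\cX^{\natural}}(x_-,x_+)\to\hom_{\cY^{\natural}}(y_-,y_+)$ is again a fibration between fibrant objects in $\omegacat^+_{\mathrm{coind}}$. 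Granting the claim, given an acyclic cofibration $f\colon(\cA,t\cA)\to(\cB,t\cB)$ and a lifting problem of $\Sigma f$ against such a $p$, the top map $\Sigma(\cA,t\cA)\to\cX^{\natural}$ sends the two poles of the suspension to a pair $x_\pm$, the bottom map sends them to $y_\pm=p(x_\pm)$, and $\Sigma f$ is the identity on poles; transposing the square along $\Sigma\dashv\hom$ gives a lifting problem of $f$ against $\hom_{\cX^{\natural}}(x_-,x_+)\to\hom_{\cY^{\natural}}(y_-,y_+)$, which admits a solution since $f$ is an acyclic cofibration and the latter map is a fibration between fibrant objects; transposing the solution back (the adjunct of a map out of a suspension is automatically bipointed) yields the required lift, so $\Sigma f$ is an acyclic cofibration.

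It remains to prove the claim, and I would handle fibrancy and the lifting property separately. For fibrancy, observe that the marking $\eq\cX$ on $\cX^{\natural}$ induces on $\cX'\coloneqq\hom_{\cX}(x_-,x_+)$ the set of those $n$-cells of $\cX'$ which, viewed as $(n{+}1)$-cells of $\cX$ with $0$-source $x_-$ and $0$-target $x_+$, lie in $\eq_{n+1}\cX$; I claim this set equals $\eq_n\cX'$. Using \cref{prop:invertible is the same as biinvertible} to pass to $\bieq$ and then \cref{walkingBiinvertibility}, an $(n{+}1)$-cell $g$ of $\cX$ over $(x_-,x_+)$ lies in $\bieq_{n+1}\cX$ precisely when it factors as $g=\tilde g\circ\Sigma^{n}f$ for some $\omega$-functor $\tilde g\colon\Sigma^{n}\widehat{\omega\cE}\to\cX$; since $g$ has $0$-source $x_-$ and $0$-target $x_+$, any such $\tilde g$ necessarily sends the two poles $\Sigma^{n}p,\Sigma^{n}q$ of $\Sigma^{n}\widehat{\omega\cE}$ to $x_-,x_+$, hence is bipointed, and transposing once along the unmarked adjunction of \cref{AdjointOfSuspension} exhibits $g$, regarded as an $n$-cell of $\cX'$, as a factorization through $\Sigma^{n-1}f$ of some $\omega$-functor $\Sigma^{n-1}\widehat{\omega\cE}\to\cX'$; by \cref{walkingBiinvertibility} applied inside $\cX'$ this happens exactly when $g\in\bieq_n\cX'=\eq_n\cX'$. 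Thus $\hom_{\cX^{\natural}}(x_-,x_+)=(\cX')^{\natural}$, which is fibrant by the characterization \ref{MarkFibObj} of fibrant objects in \cref{thm:model structure on marked omega categories}, and likewise for $\cY$.

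For the lifting property, by the characterization \ref{MarkFibFib} in \cref{thm:model structure on marked omega categories} it is enough to check that $\hom_{\cX^{\natural}}(x_-,x_+)\to\hom_{\cY^{\natural}}(y_-,y_+)$ has the right lifting property against $i_n^+\colon\cC_n^{\flat}\to(\cC_{n+1},\{e_{n+1}\}\cup\id(\cC_{n+1}))$ for all $n\geq0$. Transposing such a lifting problem along $\Sigma\dashv\hom$ produces a lifting problem of $\Sigma i_n^+$ against $p$; a routine computation with the marked suspension identifies $\Sigma(\cC_n^{\flat})$ with $\cC_{n+1}^{\flat}$ and $\Sigma(\cC_{n+1},\{e_{n+1}\}\cup\id(\cC_{n+1}))$ with $(\cC_{n+2},\{e_{n+2}\}\cup\id(\cC_{n+2}))$, under which $\Sigma i_n^+$ becomes $i_{n+1}^+$; as $p$ is a fibration between fibrant objects it has the right lifting property against $i_{n+1}^+$, and since the codomain of $e_{n+2}$ contains both objects of $\cC_{n+2}$ the resulting lift is automatically bipointed, so transposes back to a solution of the original problem. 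The main obstacle I anticipate is the second paragraph: verifying that the $(-)^{\natural}$-marking is inherited by hom-$\omega$-categories, i.e.\ that the $\omega$-equivalences of $\hom_{\cX}(x_-,x_+)$ are exactly the $\omega$-equivalences of $\cX$ with the prescribed $0$-boundary; once this bookkeeping (and the bipointedness checks in the transpositions) is in place, the suspension–hom adjunction does the rest.
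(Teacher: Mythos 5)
Your proposal is correct and follows essentially the same route as the paper: both reduce the statement via the adjunction $\Sigma\dashv\hom$ to showing that $\hom$ preserves fibrant objects and fibrations between fibrant objects, using the characterizations \ref{MarkFibObj} and \ref{MarkFibFib} of \cref{thm:model structure on marked omega categories} together with the identification of $\Sigma i_n^+$ with $i_{n+1}^+$. The only differences are presentational — you transpose each lifting problem by hand and give a more detailed verification that the $(-)^{\natural}$-marking is inherited by hom-$\omega$-categories, whereas the paper packages the bipointedness bookkeeping into a preliminary observation about the forgetful functor from the bipointed category.
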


\begin{proof}
We say that
\begin{itemize}[leftmargin=*]
    \item a map of $\omegacat^+_{*,*}$ is a \emph{fibration} in $(\omegacat^+_{\mathrm{coind}})_{*,*}$ if it is one in $\omegacat^+_{\mathrm{coind}}$ when ignoring the base points;
    \item an object of $\omegacat^+_{*,*}$ is \emph{fibrant} in $(\omegacat^+_{\mathrm{coind}})_{*,*}$ if it is one in $\omegacat^+_{\mathrm{coind}}$ when ignoring the base points;
    \item a map of $\omegacat^+_{*,*}$ is an \emph{acyclic cofibration} in $(\omegacat^+_{\mathrm{coind}})_{*,*}$ if it has the left lifting property with respect to all fibrations between fibrant objects.
\end{itemize}
As a preliminary observation, we argue that the functor \[U\colon(\omegacat^+_{\mathrm{coind}})_{*,*}\to\omegacat^+_{\mathrm{coind}}\]
preserves acyclic cofibrations. 
Let $j\colon (A,a,a')\to (B,b,b')$ be an acyclic cofibration in $(\omegacat^+_{\mathrm{coind}})_{*,*}$, and consider a lifting problem in $\omegacat^+_{\mathrm{coind}}$
\[
\begin{tikzcd}
A\arrow[r, "f"]\arrow[d, "j"]&X\arrow[d]\\
B\arrow[r, "g"]\arrow[ru, dashed]&Y
\end{tikzcd}
\]
This can be enhanced to a lifting problem in $(\omegacat^+_{\mathrm{coind}})_{*,*}$
\[
\begin{tikzcd}
(A,a,a')\arrow[r, "f"]\arrow[d, "j"]&(X,f(a),f(a'))\arrow[d]\\
(B,b,b')\arrow[r, "g" swap]\arrow[ru, dashed]&(X,g(b), g(b'))
\end{tikzcd}\]
This lifting problem admits a solution because, by definition, the left hand side map is an acyclic cofibration in $(\omegacat^+_{\mathrm{coind}})_{*,*}$ and the right hand side map is a fibration in $(\omegacat^+_{\mathrm{coind}})_{*,*}$.

Consider the adjunction
\[
\Sigma\colon\omegacat^+_{\mathrm{coind}}
\rightleftarrows(\omegacat^+_{\mathrm{coind}})_{*,*} :\!\hom.
\]
We first observe that the functor \[\hom\colon(\omegacat^+_{\mathrm{coind}})_{*,*}\to \omegacat^+_{\mathrm{coind}}\]
preserves fibrant objects. To see this, one can use the characterization of fibrant objects from \cref{thm:model structure on marked omega categories}\ref{MarkFibObj}, and observe that given a marked $\omega$-category $\cD$ and $a\in\eq_{\indind}\cD$ for $\indind>1$, then $a\in\eq_{\indind-1}\hom_{\cD}(d_0^-a,d_0^+a)$.
Further, the functor
\[\Sigma\colon\omegacat^+_{\mathrm{coind}}
\to(\omegacat^+_{\mathrm{coind}})_{*,*}\] sends the marked $\omega$-functor $i_n^+\colon\cC_n^{\flat}\hookrightarrow(\cC_{n+1},\{e_{n+1}\}\cup\id\cC_{n+1})$ to the marked $\omega$-functor $i_{n+1}^+\colon\cC_{n+1}^{\flat}\hookrightarrow(\cC_{n+2},\{e_{n+2}\}\cup\id\cC_{n+2})$. Hence,
by \cref{thm:model structure on marked omega categories}\ref{MarkFibFib}, the functor
\[\hom\colon(\omegacat^+_{\mathrm{coind}})_{*,*}\to \omegacat^+_{\mathrm{coind}}\]
preserves fibrations between fibrant objects. Finally, by definition of acyclic cofibrations and using the adjunction $\Sigma\dashv\hom$, the functor \[\Sigma\colon\omegacat^+_{\mathrm{coind}}
\to
\colon(\omegacat^+_{\mathrm{coind}})_{*,*}\]
preserves acyclic cofibrations, and so does the functor
\[U\Sigma\colon\omegacat^+_{\mathrm{coind}}\to 
(\omegacat^+_{\mathrm{coind}})_{*,*}
\to
\colon\omegacat^+_{\mathrm{coind}},\]
as desired.
\end{proof}

\subsection{The marked model for the coherent \omegat-equivalence}

\begin{const}
Let $\cB$, resp.\ $\cA$, denote the $\omega$-category freely generated by the following datum
\[\begin{tikzcd}
	& p \\
	q && q
	\arrow["{g'}", from=2-1, to=1-2]
	\arrow["f", from=1-2, to=2-3]
	\arrow[""{name=0, anchor=center, inner sep=0}, Rightarrow, no head, from=2-1, to=2-3]
	\arrow["\secondiso", shorten <=3pt, shorten >=3pt, Rightarrow, from=1-2, to=0]
\end{tikzcd}
\quad\text{resp.}\quad\begin{tikzcd}
	p && p \\
	& q
	\arrow[""{name=0, anchor=center, inner sep=0}, Rightarrow, no head, from=1-1, to=1-3]
	\arrow["f"', from=1-1, to=2-2]
	\arrow["{g}"', from=2-2, to=1-3]
	\arrow["\firstiso"', shorten <=3pt, shorten >=3pt, Rightarrow, from=2-2, to=0]
\end{tikzcd}\]
Let $(\cA,t\cA)$, resp.~$(\cB,t\cB)$,
denote the marked $\omega$-category for which $t\cA$, resp.~$t\cB$, is minimal with the property that $t\cA\supseteq\id\cA\cup\{f,\firstiso\}$, 
resp.~$t\cB\supseteq\id\cB\cup\{f,\secondiso\}$.
Let
$(\overline{\cQ},t\overline{\cQ})$ denote the marked $\omega$-category obtained as the pushout in $\omegacat^+$:
\[\begin{tikzcd}
	{\cC_1^{\sharp}} & {(\cB,t\cB)} \\
	{(\cA,t\cA)} & {(\overline{\cQ},t\overline{\cQ})}
	\arrow["{f}", from=1-1, to=1-2]
	\arrow["{f}"', from=1-1, to=2-1]
	\arrow[from=2-1, to=2-2]
	\arrow[from=1-2, to=2-2]
\end{tikzcd}\]
We refer the reader to \cite[Construction~2.14]{HenryLoubaton} for a description of pushouts in $\omegacat^+$.
\end{const}

\begin{lem}
\label{lemma:an important acyclic cofibration}
The marked $\omega$-functor
\[f\colon\cC_1^{\sharp}\to  (\overline{\cQ},t\overline{\cQ})\]
is an acyclic cofibration in $\omegacat^+_{\mathrm{coind}}$.
\end{lem}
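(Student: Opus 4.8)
The plan is to reduce the statement, via the pushout that defines $(\overline{\cQ},t\overline{\cQ})$ together with the closure properties of acyclic cofibrations in a left semi-model structure, to an analogous statement about the two ``one-sided'' building blocks $(\cA,t\cA)$ and $(\cB,t\cB)$, and then to recognise each of these as an anodyne extension of $\omegacat^+_{\mathrm{coind}}$.

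First I would observe that $f\colon\cC_1^\sharp\to(\cA,t\cA)$ and $f\colon\cC_1^\sharp\to(\cB,t\cB)$ are cofibrations: by \cref{thm:model structure on marked omega categories}\ref{MarkCofib} this reduces to the underlying $\omega$-functors $\cC_1\hookrightarrow\cA$ and $\cC_1\hookrightarrow\cB$ being cofibrations in $\omega\cat_{\mathrm{can}}$, which holds since they are inclusions of sub-polygraphs (picking out the generator $f$). In the pushout square defining $(\overline{\cQ},t\overline{\cQ})$, the map $(\cA,t\cA)\to(\overline{\cQ},t\overline{\cQ})$ is the cobase change of $f\colon\cC_1^\sharp\to(\cB,t\cB)$, and the map $f\colon\cC_1^\sharp\to(\overline{\cQ},t\overline{\cQ})$ of the statement is the composite $\cC_1^\sharp\xrightarrow{f}(\cA,t\cA)\to(\overline{\cQ},t\overline{\cQ})$. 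Since acyclic cofibrations are closed under pushout and composition, it then suffices to prove that $f\colon\cC_1^\sharp\to(\cA,t\cA)$ and $f\colon\cC_1^\sharp\to(\cB,t\cB)$ are acyclic cofibrations. Moreover, the $\omega$-categorical duality that reverses the direction of $1$-cells, combined with the relabelling swapping the two objects, carries $(\cA,t\cA)$ with its map from $\cC_1^\sharp$ onto $(\cB,t\cB)$ with its map from $\cC_1^\sharp$; as this duality preserves polygraphs, the generating cofibrations $\partial\cC_n\hookrightarrow\cC_n$, and the sets $\eq(-)$ of $\omega$-equivalences, it restricts to an automorphism of $\omegacat^+_{\mathrm{coind}}$, so it would be enough to handle $f\colon\cC_1^\sharp\to(\cA,t\cA)$ (the case of $(\cB,t\cB)$ then being the mirror argument).

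The last step is the heart of the matter: showing that $f\colon\cC_1^\sharp\to(\cA,t\cA)$ is an acyclic cofibration. This map adjoins to the marked $1$-cell $f$ a weak inverse $g$ together with a marked $2$-cell $\firstiso\colon g*_0 f\to\id_p$; since in a fibrant marked $\omega$-category the image of $f$ becomes a coherent $\omega$-equivalence and the image of $\firstiso$ is invertible, the image of $g$ is forced to be equivalent to a weak inverse of $f$, so the attached data is contractible and the inclusion should be anodyne. I would make this precise either by exhibiting $f\colon\cC_1^\sharp\to(\cA,t\cA)$ as a pushout of an elementary anodyne extension of $\omegacat^+_{\mathrm{coind}}$ in the sense of \cite{HenryLoubaton} --- the one witnessing the enrichment of a marked cell by a weak inverse with a marked witnessing cell --- or by verifying directly the left lifting property of $f$ against an arbitrary fibration $\cB^\natural\to\cD^\natural$ between fibrant objects: given a lift of the image of $f$ to a marked $1$-cell of $\cB^\natural$, which is then an $\omega$-equivalence, one lifts the cells $g$ and $\firstiso$ along the fibration using the generating description of such fibrations in \cref{thm:model structure on marked omega categories}\ref{MarkFibFib}. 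Together with the reductions above this yields that $f\colon\cC_1^\sharp\to(\overline{\cQ},t\overline{\cQ})$ is an acyclic cofibration. I expect this last step to be the main obstacle, since it requires a careful inventory of the cells of $(\cA,t\cA)$ and an appeal to the internal workings of $\omegacat^+_{\mathrm{coind}}$ from \cite{HenryLoubaton}, rather than to the formal properties of the model structure alone.
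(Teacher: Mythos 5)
Your proposal is correct and follows essentially the same route as the paper: reduce along the defining pushout and closure of acyclic cofibrations under cobase change and composition to the two maps $f\colon\cC_1^{\sharp}\to(\cA,t\cA)$ and $f\colon\cC_1^{\sharp}\to(\cB,t\cB)$, and then recognise each as one of the elementary anodyne maps of \cite{HenryLoubaton}. The paper makes your first suggested option for the key step precise by identifying these two maps as \emph{equation inclusions} in the sense of \cite[Definition~3.1]{HenryLoubaton}, which are acyclic cofibrations in the inductive left semi-model structure by \cite[Corollary~3.24]{HenryLoubaton} and hence in its localization $\omegacat^+_{\mathrm{coind}}$; in particular your duality reduction from $(\cB,t\cB)$ to $(\cA,t\cA)$ and the direct lifting argument are not needed.
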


\begin{proof}
The marked $\omega$-functors
\[f\colon\cC_1^{\sharp}\to(\cA,t\cA)\quad\text{ and }\quad f\colon\cC_1^{\sharp}\to(\cB,t\cB)\]
can be recognized as equation inclusions (in the sense of \cite[Definition~3.1]{HenryLoubaton}),
so they are by \cite[Corollary~3.24]{HenryLoubaton} acyclic cofibrations
in the inductive left semi-model structure from \cite[Corollary~2.38]{HenryLoubaton}, hence in the left semi-model structure $\omegacat^+_{\mathrm{coind}}$, which was constructed as a left Bousfield localization of it (cf.~\cref{thm:model structure on marked omega categories}).
Furthermore, since acyclic cofibrations are closed under pushouts,
the marked $\omega$-functor
\[(\cA,t\cA)\to (\overline{\cQ},t\overline{\cQ})\]
is also an acyclic cofibration in $\omegacat^+_{\mathrm{coind}}$, and hence so is the composite
\[f\colon\cC_1^{\sharp}\to (\cA,t\cA)\to (\overline{\cQ},t\overline{\cQ}),\]
as desired.
\end{proof}

\begin{const}
\label{TruncatedOverlineGuy}
Let $(\overline{\omega\cE}^{(0)},t\overline{\omega\cE}^{(0)}) \coloneqq \cC_1^\sharp$.
For $\indind > 0$, we define inductively $(\overline{\omega\cE}^{(\indind)},t\overline{\omega\cE}^{(\indind)})$ to be a marked $\omega$-category coming with a triple of marked $\omega$-functors
\[\overline\imath_{\indind}\colon  (\overline{\omega\cE}^{(\indind-1)},t\overline{\omega\cE}^{(\indind-1)}) \to (\overline{\omega\cE}^{(\indind)},t\overline{\omega\cE}^{(\indind)}),\]
\[\firstiso_{\indind}, \secondiso_{\indind}\colon \Sigma( \overline{\omega\cE}^{(\indind-1)},t\overline{\omega\cE}^{(\indind-1)}) \to (\overline{\omega\cE}^{(\indind)},t\overline{\omega\cE}^{(\indind)}).\]

\begin{itemize}[leftmargin=*]
\item For $\indind = 1$, we let $(\overline{\omega\cE}^{(1)},t\overline{\omega\cE}^{(1)}) \coloneqq (\overline{\cQ},t\overline{\cQ})$, we let $\overline\imath_1$ be the
marked $\omega$-functor
\[
f\colon\cC_1^\sharp\to(\overline{\cQ},t\overline{\cQ})
\]
and $\firstiso_1$, $\secondiso_1$ be defined by
\[
    \firstiso_1\colon \Sigma p \mapsto g \underset{0}* f, \; \Sigma q \mapsto \id_p, \;  \Sigma f\mapsto\firstiso\quad\text{ and }\quad
    \secondiso_1\colon \Sigma p \mapsto f \underset{0}*g', \; \Sigma q \mapsto \id_q, \;
    \Sigma f\mapsto\beta.
\]
\item For $\indind > 1$, we let $(\overline{\omega\cE}^{(\indind)},t\overline{\omega\cE}^{(\indind)})$, $\overline\imath_\indind$, $\firstiso_\indind$, and $\secondiso_\indind$ be defined by the pushout in $\omegacat^+$
\begin{equation}
    \label{omegacen_pushout2}
\begin{tikzcd}[column sep=scriptsize]
	{\Sigma(\overline{\omega\cE}^{(\indind-2)},t\overline{\omega\cE}^{(\indind-2)}) \amalg \Sigma(\overline{\omega\cE}^{(\indind-2)},t\overline{\omega\cE}^{(\indind-2)})} &&& {(\overline{\omega\cE}^{(\indind-1)},t\overline{\omega\cE}^{(\indind-1)})} \\
	{\Sigma(\overline{\omega\cE}^{(\indind-1)},t\overline{\omega\cE}^{(\indind-1)}) \amalg \Sigma( \overline{\omega\cE}^{(\indind-1)},t\overline{\omega\cE}^{(\indind-1)})} &&& {(\overline{\omega\cE}^{(\indind)},t\overline{\omega\cE}^{(\indind)})}.
	\arrow["{[\firstiso_{\indind-1}, \secondiso_{\indind-1}]}", from=1-1, to=1-4]
	\arrow["{\overline\imath_{\indind}}", from=1-4, to=2-4]
	\arrow["{\Sigma(\overline\imath_{\indind-1}) \amalg \Sigma (\overline\imath_{\indind-1})}", from=1-1, to=2-1,swap]
	\arrow["{[\firstiso_{\indind}, \secondiso_{\indind}]}", from=2-1, to=2-4,swap]
\end{tikzcd}
\end{equation}
\end{itemize}
\end{const}

\begin{lem}
\label{lemma:another important acyclic cofibration}
For all $k\geq0$ the marked $\omega$-functor
\[\overline\imath_{\indind}\colon(\overline{\omega\cE}^{(\indind-1)},t\overline{\omega\cE}^{(\indind-1)})\hookrightarrow(\overline{\omega\cE}^{(\indind)},t\overline{\omega\cE}^{(\indind)})\]
is an acyclic cofibration in $\omegacat^+_{\mathrm{coind}}$. In particular, $(\overline{\omega\cE}^{(\indind-1)},t\overline{\omega\cE}^{(\indind-1)})$ is cofibrant in $\omegacat^+_{\mathrm{coind}}$.
\end{lem}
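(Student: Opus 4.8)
The plan is to prove the statement by induction on $\indind$, showing at each stage that the marked $\omega$-functor $\overline\imath_\indind$ is an acyclic cofibration in $\omegacat^+_{\mathrm{coind}}$; the cofibrancy of $(\overline{\omega\cE}^{(\indind-1)},t\overline{\omega\cE}^{(\indind-1)})$ will then follow as a clean corollary. For the base case $\indind=1$, we have $(\overline{\omega\cE}^{(0)},t\overline{\omega\cE}^{(0)})=\cC_1^{\sharp}$ and $(\overline{\omega\cE}^{(1)},t\overline{\omega\cE}^{(1)})=(\overline{\cQ},t\overline{\cQ})$, and $\overline\imath_1$ is the marked $\omega$-functor $f\colon\cC_1^{\sharp}\to(\overline{\cQ},t\overline{\cQ})$, which is an acyclic cofibration by \cref{lemma:an important acyclic cofibration}.

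For the inductive step, fix $\indind>1$ and assume that $\overline\imath_{\indind-1}$ is an acyclic cofibration. By \cref{SuspensionHomotopical} the marked $\omega$-functor $\Sigma(\overline\imath_{\indind-1})$ is again an acyclic cofibration; and since acyclic cofibrations are by definition the maps with the left lifting property against fibrations between fibrant objects, this class is closed under coproducts, so $\Sigma(\overline\imath_{\indind-1})\amalg\Sigma(\overline\imath_{\indind-1})$ is an acyclic cofibration as well. By \cref{TruncatedOverlineGuy}, the map $\overline\imath_\indind$ is the pushout of $\Sigma(\overline\imath_{\indind-1})\amalg\Sigma(\overline\imath_{\indind-1})$ along $[\firstiso_{\indind-1},\secondiso_{\indind-1}]$ in the square \eqref{omegacen_pushout2}, and in a left semi-model structure acyclic cofibrations are closed under pushout; hence $\overline\imath_\indind$ is an acyclic cofibration, which closes the induction. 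For the ``in particular'' clause, note that $\cC_1^{\sharp}$ is cofibrant in $\omegacat^+_{\mathrm{coind}}$, because its underlying $\omega$-category $\cC_1$ is a polygraph and hence cofibrant in $\omega\cat_{\mathrm{can}}$, so that the map from the initial object of $\omegacat^+$ is a cofibration by \cref{thm:model structure on marked omega categories}\ref{MarkCofib}. Since each $\overline\imath_\indind$ is an acyclic cofibration it is in particular a cofibration --- concretely, by \cref{thm:model structure on marked omega categories}\ref{MarkCofib} together with \cref{SigmaAndU} one checks inductively that its underlying $\omega$-functor is an inclusion of polygraphs, using that $U$ preserves colimits (\cref{AdjointsOfU}) and that suspension takes inclusions of polygraphs to inclusions of polygraphs. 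Composing the cofibrations $\varnothing\to(\overline{\omega\cE}^{(0)})\to(\overline{\omega\cE}^{(1)})\to\dots\to(\overline{\omega\cE}^{(\indind)})$ then exhibits $(\overline{\omega\cE}^{(\indind)},t\overline{\omega\cE}^{(\indind)})$ as cofibrant.

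I do not expect a genuine obstacle here: the proof is an assembly of \cref{lemma:an important acyclic cofibration}, \cref{SuspensionHomotopical}, and the stability of (acyclic) cofibrations under coproducts, pushouts, and composition. The only points demanding a little vigilance are the verification that acyclic cofibrations are closed under coproducts --- an elementary lifting manipulation --- and, if one prefers not to invoke ``acyclic cofibration $\Rightarrow$ cofibration'' as a black box in this left semi-model-categorical setting, the bookkeeping that the underlying $\omega$-functors $U\overline\imath_\indind$ are inclusions of polygraphs, which in turn rests on the fact that $\Sigma$ preserves such inclusions.
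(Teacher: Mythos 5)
Your proof is correct and follows essentially the same route as the paper's (much terser) argument: induction on $\indind$, with base case \cref{lemma:an important acyclic cofibration} and inductive step obtained from the pushout \eqref{omegacen_pushout2}, using \cref{SuspensionHomotopical} together with closure of acyclic cofibrations under coproducts and pushouts. Your additional care over the ``in particular'' clause --- checking that $\cC_1^{\sharp}$ is cofibrant and that the $\overline\imath_{\indind}$ are genuine cofibrations in the sense of \cref{thm:model structure on marked omega categories}\ref{MarkCofib} --- is sound and fills in a detail the paper leaves implicit.
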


\begin{proof}
One can deduce this by induction on $\indind\geq1$. The base case is \cref{lemma:an important acyclic cofibration}, and the inductive step is a consequence of the induction hypothesis and \eqref{omegacen_pushout2}.
\end{proof}

\begin{const}
\label{omegaEoverline}
We denote by $(\overline{\omega\cE},t\overline{\omega\cE})$
the colimit in $\omegacat^+$ given by
\[(\overline{\omega\cE},t\overline{\omega\cE})\coloneqq \colim[\ \dots\hookleftarrow (\overline{\omega\cE}^{(\indind)},t\overline{\omega\cE}^{(\indind)})\hookleftarrow\dots
\hookleftarrow(\overline{\omega\cE}^{(0)},t\overline{\omega\cE}^{(0)})\ ].\]
\end{const}

\begin{lem}
\label{lemma:another important acyclic cofibration2}
Given $k\geq0$, the marked $\omega$-functor
\[\overline\imath_{\indind,\infty}\colon(\overline{\omega\cE}^{(\indind)},t\overline{\omega\cE}^{(\indind)})\hookrightarrow(\overline{\omega\cE},t\overline{\omega\cE})\]
obtained as a structure map in the colimit cone from \cref{omegaEoverline}, is an acyclic cofibration in $\omegacat^+_{\mathrm{coind}}$.  In particular, $(\overline{\omega\cE},t\overline{\omega\cE})$ is cofibrant in $\omegacat^+_{\mathrm{coind}}$.
\end{lem}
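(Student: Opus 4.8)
The plan is to exhibit $\overline\imath_{k,\infty}$ as a transfinite composition of the maps $\overline\imath_m$, each of which is an acyclic cofibration by \cref{lemma:another important acyclic cofibration}, and then to invoke the fact --- recalled just before \cref{thm:model structure on marked omega categories} --- that in a left semi-model structure the class of acyclic cofibrations is closed under transfinite composition.

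In detail, I would first note that a colimit over $\mathbb{N}$ is unchanged when a finite initial segment of the indexing diagram is discarded, so that $(\overline{\omega\cE},t\overline{\omega\cE})$ from \cref{omegaEoverline} is equally the colimit of the truncated tower
\[
(\overline{\omega\cE}^{(k)},t\overline{\omega\cE}^{(k)})\xhookrightarrow{\overline\imath_{k+1}}(\overline{\omega\cE}^{(k+1)},t\overline{\omega\cE}^{(k+1)})\xhookrightarrow{\overline\imath_{k+2}}\cdots,
\]
and that under this identification $\overline\imath_{k,\infty}$ is precisely the canonical comparison map from the first object of this tower to its colimit, i.e.\ the transfinite composition of the $\overline\imath_m$ for $m>k$. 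Since each such $\overline\imath_m$ is an acyclic cofibration in $\omegacat^+_{\mathrm{coind}}$ by \cref{lemma:another important acyclic cofibration}, and acyclic cofibrations in a left semi-model category are closed under transfinite composition, $\overline\imath_{k,\infty}$ is an acyclic cofibration.

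For the final assertion, \cref{lemma:another important acyclic cofibration} moreover gives that $(\overline{\omega\cE}^{(k)},t\overline{\omega\cE}^{(k)})$ is cofibrant (the base case $k=0$ amounting to the observation that $\cC_1^{\sharp}$ is cofibrant, since $\cC_1$ is a polygraph and cofibrations in $\omegacat^+_{\mathrm{coind}}$ are detected on underlying $\omega$-categories by \cref{thm:model structure on marked omega categories}\ref{MarkCofib}); composing the corresponding cofibration out of the initial object of $\omegacat^+$ with the acyclic cofibration $\overline\imath_{k,\infty}$ then exhibits $(\overline{\omega\cE},t\overline{\omega\cE})$ as cofibrant. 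I do not expect any genuine obstacle here; the only step meriting a line of care is the identification in the second paragraph of $\overline\imath_{k,\infty}$ with a transfinite composition --- that is, checking that the structure map of the full colimit cone agrees with the comparison map out of the truncated tower --- which is immediate from the cofinality of $\{m\in\mathbb{N}\mid m\geq k\}$ in $\mathbb{N}$.
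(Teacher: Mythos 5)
Your argument is correct and is essentially the paper's own proof, which likewise deduces the result from \cref{lemma:another important acyclic cofibration} together with closure of acyclic cofibrations under transfinite composition and the fact that acyclic cofibrations are cofibrations; you merely spell out the cofinality identification and the cofibrancy of the stages in more detail.
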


\begin{proof}
This follows from \cref{lemma:another important acyclic cofibration}, the fact that the class of acyclic cofibrations is closed under transfinite composition, and the fact that acyclic cofibrations are cofibrations.
\end{proof}

We can understand the underlying $\omega$-category of $(\overline{\omega\cE},t\overline{\omega\cE})$:

\begin{lem}
\label{lem:comparaison widehat and overline 2}
Given $\indind\geq0$,
there exist $\omega$-functors
\[
 \eta^{(\indind)}\colon\widehat{\omega\mathcal{E}}^{(\indind)}\to \overline{\omega\mathcal{E}}^{(\indind)}\quad\text{ and }\quad
    \mu^{(\indind)}\colon \overline{\omega\mathcal{E}}^{(\indind)}\to \widehat{\omega\mathcal{E}}^{(\indind+1)}
\]
that make the following diagram in $\omega\cat$ commute:
\begin{equation}
\label{PropertyMuEta}
\begin{tikzcd}
	{\widehat{\omega\mathcal{E}}^{(\indind)}} && {\widehat{\omega\mathcal{E}}^{(\indind+1)}} && {\widehat{\omega\mathcal{E}}^{(\indind+2)}}. \\
	& { \overline{\omega\mathcal{E}}^{(\indind)}} && { \overline{\omega\mathcal{E}}^{(\indind+1)}}
	\arrow["{\eta^{(\indind)}}"{description}, from=1-1, to=2-2]
	\arrow["\imath_{\indind}", from=1-1, to=1-3,hook]
	\arrow["{\mu^{(\indind)}}"{description}, from=2-2, to=1-3]
	\arrow["{\eta^{(\indind+1)}}"{description}, from=1-3, to=2-4]
	\arrow["\overline\imath_{\indind}", from=2-2, to=2-4,hook]
	\arrow["{\mu^{(\indind+1)}}"{description}, from=2-4, to=1-5]
	\arrow["\imath_{\indind+1}", from=1-3, to=1-5,hook]
\end{tikzcd}
\end{equation}
\end{lem}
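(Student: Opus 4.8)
The plan is to construct the $\omega$-functors $\eta^{(k)}$ and $\mu^{(k)}$ by simultaneous induction on $k\geq0$, together with the commutativity conditions in \eqref{PropertyMuEta}, exploiting that both towers $\widehat{\omega\cE}^{(k)}$ and $\overline{\omega\cE}^{(k)}$ are built by the ``same'' iterated pushout recipe along suspensions. The two families will be produced hand in hand: at stage $k$ the map $\mu^{(k-1)}\colon\overline{\omega\cE}^{(k-1)}\to\widehat{\omega\cE}^{(k)}$ together with the structural data of the pushout \eqref{omegacen_pushout2} defining $\overline{\omega\cE}^{(k)}$ will be used to produce $\eta^{(k)}$ landing in $\overline{\omega\cE}^{(k)}$, and symmetrically $\eta^{(k)}$ together with the pushout \eqref{omegacen_pushout} defining $\widehat{\omega\cE}^{(k+1)}$ will produce $\mu^{(k)}$.

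First I would set up the base case $k=0$. Here $\widehat{\omega\cE}^{(0)}=\cC_0\amalg\cC_0$ and $\overline{\omega\cE}^{(0)}=\cC_1^{\sharp}$, whose underlying $\omega$-category is $\cC_1$. I take $\eta^{(0)}\colon\cC_0\amalg\cC_0\to\cC_1$ to be the inclusion of the two endpoints (sending the two copies of $\cC_0$ to $p$ and $q$), and $\mu^{(0)}\colon\cC_1\to\cQ=\widehat{\omega\cE}^{(1)}$ to be the classifying functor of the generating $1$-cell $f$. Then the left square of \eqref{PropertyMuEta} commutes because $\mu^{(0)}\circ\eta^{(0)}$ picks out the endpoints $p,q$ of $f$ in $\cQ$, which is exactly $\imath_0$; and the remaining triangles at level $0$ involve $\eta^{(1)}$ and $\mu^{(1)}$, to be constructed. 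I should also treat $k=1$ somewhat by hand, since $\widehat{\omega\cE}^{(1)}=\cQ$ and $\overline{\omega\cE}^{(1)}=\overline{\cQ}$ are defined directly rather than by the generic pushout: I take $\eta^{(1)}\colon\cQ\to\overline{\cQ}$ to send $f,g,g'$ to the images of $f,g',f$-composed-appropriately using the generating cells of $\cA$ and $\cB$ inside $\overline{\cQ}$ (matching the assignments $\firstiso_1\colon \Sigma p\mapsto g*_0 f$, $\secondiso_1\colon\Sigma p\mapsto f*_0 g'$ built into \cref{TruncatedOverlineGuy}), and $\mu^{(1)}\colon\overline{\cQ}\to\widehat{\omega\cE}^{(2)}$ to send the generating $2$-cells $\firstiso,\secondiso$ of $\overline{\cQ}$ to the cells $\firstiso_2(\Sigma f),\secondiso_2(\Sigma f)$ appearing in $\widehat{\omega\cE}^{(2)}$ via the pushout \eqref{omegacen_pushout}. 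Verifying that these are well-defined $\omega$-functors amounts to checking that the defining relations of $\cQ$ (none, it is free) and of $\overline{\cQ}$ (the identifications in the pushout defining it, plus the generating $2$-cells of $\cA$, $\cB$) are respected — which is a matter of unwinding the explicit assignments in \cref{constructionJ,Jtruncated,TruncatedOverlineGuy}.

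For the inductive step $k>1$, I would feed the already-constructed $\eta^{(k-1)}$, $\mu^{(k-1)}$ (and $\eta^{(k-2)}$, $\mu^{(k-2)}$) into the universal properties of the two pushouts. Since suspension preserves pushouts and coproducts (\cref{MarkedAdjointOfSuspension} and its unmarked version \cref{AdjointOfSuspension}, together with \cref{SigmaAndU} to pass between marked and underlying data), applying $\Sigma$ to the previously constructed maps gives compatible cocones over the span defining $\overline{\omega\cE}^{(k)}$ in \eqref{omegacen_pushout2}: the top-right map $[\firstiso_{k-1},\secondiso_{k-1}]$ postcomposed with $\eta^{(k-1)}$, and the left map composed with $[\Sigma\eta^{(k-2)}\amalg\Sigma\eta^{(k-2)}]$ followed into $\overline{\omega\cE}^{(k-1)}$, must agree on $\Sigma\widehat{\omega\cE}^{(k-2)}\amalg\Sigma\widehat{\omega\cE}^{(k-2)}$ — and this agreement is precisely the inductive hypothesis \eqref{PropertyMuEta} at level $k-2$, suspended. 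Hence the universal property produces $\eta^{(k)}$, and a symmetric argument using \eqref{omegacen_pushout} produces $\mu^{(k)}$. The commutativity relations in \eqref{PropertyMuEta} at level $k$ then follow by the uniqueness clause in each universal property: both sides of each equation are cocone maps out of a pushout agreeing on the two legs.

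\textbf{Main obstacle.} The routine part is the diagram chase with the universal properties; the genuinely delicate point is the bookkeeping that makes the two towers ``interleave'' correctly — the shift by one in $\mu^{(k)}\colon\overline{\omega\cE}^{(k)}\to\widehat{\omega\cE}^{(k+1)}$ versus $\eta^{(k)}\colon\widehat{\omega\cE}^{(k)}\to\overline{\omega\cE}^{(k)}$ — so that the compatibility squares one needs for the induction (which involve $\eta$ and $\mu$ at several adjacent levels, all suspended) really are the hypotheses available, and not something stronger. Getting the base cases $k=0,1$ to dovetail with the generic inductive machinery, in particular checking that the explicit assignments for $\firstiso_1,\secondiso_1$ in the marked and unmarked constructions are compatible under $\eta^{(1)}$ and $\mu^{(1)}$, is where the real care is needed; everything above level $1$ is then formal.
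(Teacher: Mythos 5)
Your proposal follows essentially the same route as the paper: the same base cases $k=0,1$ handled explicitly (with $\eta^{(0)}$ the endpoint inclusion $\partial\cC_1\hookrightarrow\cC_1$, $\mu^{(0)}$ classifying $f$, and $\mu^{(1)}$ sending $\firstiso,\secondiso$ to the suspended generators in $\widehat{\omega\cE}^{(2)}$), followed by the same inductive step using that $\Sigma$ preserves the relevant pushouts so that the universal properties of \eqref{omegacen_pushout} and \eqref{omegacen_pushout2} produce $\eta^{(\indind+1)}$ and $\mu^{(\indind+1)}$ from the maps of spans, with commutativity of \eqref{PropertyMuEta} checked inductively. The only blemish is your slightly garbled description of $\eta^{(1)}$, which should simply be the evident inclusion $\cQ\hookrightarrow\overline{\cQ}$ sending $f,g,g'$ to $f,g,g'$; otherwise the argument matches the paper's.
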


\begin{proof}
We construct the $\omega$-functors $\eta^{(\indind)}$ and $\mu^{(\indind)}$ by induction on $\indind\geq0$
For the base cases, we set $\eta^{(0)}$ and $\mu^{(0)}$ to be the $\omega$-functors
\[
\eta^{(0)}\colon\widehat{\omega\mathcal{E}}^{(0)}=
\partial \mathcal{C}_1\hookrightarrow \mathcal{C}_1=\overline{\omega\mathcal{E}}^{(0)}
\quad\text{ and }\quad
\mu^{(0)}\colon \overline{\omega\mathcal{E}}^{(0)}=\cC_1\xrightarrow{f_1}\cQ=\widehat{\omega\mathcal{E}}^{(1)},
\]
and we set $\eta^{(1)}$ and $\mu^{(1)}$ to be the unique $\omega$-functors
\[
\eta^{(1)}\colon\widehat{\omega\mathcal{E}}^{(1)}=
\cQ\hookrightarrow \overline{\cQ}=\overline{\omega\mathcal{E}}^{(1)}
\quad\text{ and }\quad
\mu^{(1)}\colon \overline{\omega\mathcal{E}}^{(1)}=\overline{\cQ}\to\widehat{\omega\mathcal{E}}^{(2)},
\]
which are identity on underlying $1$-categories and such that
\[
\mu^{(1)}\colon\alpha\mapsto \alpha_1(\Sigma f)\quad\text{ and }\quad\mu^{(1)}\colon\beta\mapsto \beta_1(\Sigma f).
\]
For the inductive step, we assume that $\eta^{(\indind)}$ and $\mu^{(\indind)}$ have been constructed, and we now construct $\eta^{(\indind+1)}$ and $\mu^{(\indind+1)}$.
Using \cref{SigmaAndU,MarkedAdjointOfSuspension} and \eqref{omegacen_pushout2}, we see that there is a commutative diagram in $\omega\cat$:
\[\begin{tikzcd}
	{\Sigma(\widehat{\omega \mathcal{E}}^{(\indind)})\amalg\Sigma(\widehat{\omega \mathcal{E}}^{(\indind)})} & {\Sigma(\widehat{\omega \mathcal{E}}^{(\indind-1)})\amalg\Sigma(\widehat{\omega \mathcal{E}}^{(\indind-1)})} & {\widehat{\omega \mathcal{E}}^{(\indind)}} \\
	{\Sigma(\overline{\omega \mathcal{E}}^{(\indind)})\amalg\Sigma(\overline{\omega \mathcal{E}}^{(\indind)})} & {\Sigma(\overline{\omega \mathcal{E}}^{(\indind-1)})\amalg\Sigma(\overline{\omega \mathcal{E}}^{(\indind-1)})} & {\overline{\omega \mathcal{E}}^{(\indind)}}
	\arrow[from=1-2, to=1-3]
	\arrow["{\eta^{(\indind)}}", from=1-3, to=2-3]
	\arrow["{\Sigma\eta^{(\indind-1)}\amalg \Sigma\eta^{(\indind-1)}}", from=1-2, to=2-2]
	\arrow["{\Sigma\eta^{(\indind)}\amalg \Sigma\eta^{(\indind)}}", from=1-1, to=2-1]
	\arrow[from=2-2, to=2-3]
	\arrow[from=2-2, to=2-1]
	\arrow[from=1-2, to=1-1]
\end{tikzcd}\]
and, using \eqref{omegacen_pushout2}, we define $\eta^{(\indind+1)}$ as the $\omega$-functor
\[\eta^{(\indind+1)}\colon \widehat{\omega\mathcal{E}}^{(\indind+1)}\to \overline{\omega\mathcal{E}}^{(\indind+1)}\]
induced at the level of colimits by this map of spans in $\omega\cat$. Similarly, using again \cref{SigmaAndU,MarkedAdjointOfSuspension} and \eqref{omegacen_pushout2}, we see that there is a commutative diagram in $\omega\cat$:
\[\begin{tikzcd}
	{\Sigma(\overline{\omega \mathcal{E}}^{(\indind)})\amalg\Sigma(\overline{\omega \mathcal{E}}^{(\indind)})} & {\Sigma(\overline{\omega \mathcal{E}}^{(\indind-1)})\amalg\Sigma(\overline{\omega \mathcal{E}}^{(\indind-1)})} & {\overline{\omega \mathcal{E}}^{(\indind)}} \\
	{\Sigma(\widehat{\omega \mathcal{E}}^{(\indind+1)})\amalg\Sigma(\widehat{\omega \mathcal{E}}^{(\indind+1)})} & {\Sigma(\widehat{\omega \mathcal{E}}^{(\indind)})\amalg\Sigma(\widehat{\omega \mathcal{E}}^{(\indind)})} & {\widehat{\omega \mathcal{E}}^{(\indind+1)}}
	\arrow[from=2-2, to=2-3]
	\arrow["{\mu^{(\indind)}}", from=1-3, to=2-3]
	\arrow["{\Sigma\mu^{(\indind-1)}\amalg \Sigma\mu^{(\indind-1)}}", from=1-2, to=2-2]
	\arrow["{\Sigma\mu^{(\indind)}\amalg \Sigma\mu^{(\indind)}}", from=1-1, to=2-1]
	\arrow[from=1-2, to=1-3]
	\arrow[from=1-2, to=1-1]
	\arrow[from=2-2, to=2-1]
\end{tikzcd}\]
and, using \eqref{omegacen_pushout2}, we define $\mu^{(\indind+1)}$ as the $\omega$-functor
\[\mu^{(\indind+1)}\colon   \overline{\omega\mathcal{E}}^{(\indind+1)}\to\widehat{\omega\mathcal{E}}^{(\indind+2)}\]
induced at the level of colimits by this map of spans in $\omega\cat$.
One can finally show, by induction on $\indind\geq0$,
that the $\omega$-functors $\eta^{(\indind)}$, $\mu^{(\indind)}$, $\eta^{(\indind+1)}$ and $\mu^{(\indind+1)}$ fit into the desired commutative diagram in $\omega\cat$.
\end{proof}

\begin{prop}
\label{underlying omega category of overline omega cE}
There is an isomorphism in $\omega\cat$

\[\mu\colon\overline{\omega\cE}=U(\overline{\omega\cE},t\overline{\omega\cE})\cong\widehat{\omega\cE}\colon\eta.\]
\end{prop}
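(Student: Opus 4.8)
The plan is to pass to the colimit in the interleaving diagram \eqref{PropertyMuEta} furnished by \cref{lem:comparaison widehat and overline 2}. As a preliminary step I would note that, by \cref{AdjointsOfU}, the forgetful functor $U\colon\omegacat^+\to\omega\cat$ is a left adjoint and hence preserves the sequential colimit of \cref{omegaEoverline}. Therefore $U(\overline{\omega\cE},t\overline{\omega\cE})$ is the colimit in $\omega\cat$ of the sequence $\overline{\omega\cE}^{(0)}\to\overline{\omega\cE}^{(1)}\to\cdots$ with transition maps the underlying $\omega$-functors of the $\overline\imath_k$. Combined with \cref{constomegahat}, both $\widehat{\omega\cE}$ and $\overline{\omega\cE}=U(\overline{\omega\cE},t\overline{\omega\cE})$ are thus presented as sequential colimits in $\omega\cat$, over the towers $(\widehat{\omega\cE}^{(k)},\imath_k)_{k\geq0}$ and $(\overline{\omega\cE}^{(k)},\overline\imath_k)_{k\geq0}$.

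Next I would read off from the commutativity of \eqref{PropertyMuEta} the two families of identities
\[
\mu^{(k)}\circ\eta^{(k)}=\imath_k\quad\text{ and }\quad\eta^{(k+1)}\circ\mu^{(k)}=\overline\imath_k
\]
valid for every $k\geq0$. Composing these yields $\eta^{(k+1)}\circ\imath_k=\overline\imath_k\circ\eta^{(k)}$ and $\mu^{(k+1)}\circ\overline\imath_k=\imath_{k+1}\circ\mu^{(k)}$, which say exactly that the families $(\eta^{(k)})_{k\geq0}$ and $(\mu^{(k)})_{k\geq0}$ are compatible with the transition maps of the two towers (the second one after the harmless reindexing $k\mapsto k+1$ on the $\widehat{\omega\cE}$-side, using that the cofinal subtower $(\widehat{\omega\cE}^{(k+1)})_{k\geq0}$ computes the same colimit $\widehat{\omega\cE}$). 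Hence, composing with the colimit structure maps, these families induce $\omega$-functors
\[
\eta\colon\widehat{\omega\cE}\to\overline{\omega\cE}=U(\overline{\omega\cE},t\overline{\omega\cE})\quad\text{ and }\quad\mu\colon\overline{\omega\cE}=U(\overline{\omega\cE},t\overline{\omega\cE})\to\widehat{\omega\cE}.
\]

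Finally, I would check that $\mu$ and $\eta$ are mutually inverse by testing against the structure maps of the two colimit cocones. Writing $s_k\colon\widehat{\omega\cE}^{(k)}\to\widehat{\omega\cE}$ and $\overline s_k\colon\overline{\omega\cE}^{(k)}\to\overline{\omega\cE}$ for the structure maps, the construction of $\eta$ and $\mu$ gives $\eta\circ s_k=\overline s_k\circ\eta^{(k)}$ and $\mu\circ\overline s_k=s_{k+1}\circ\mu^{(k)}$, so that
\[
\mu\circ\eta\circ s_k=s_{k+1}\circ\mu^{(k)}\circ\eta^{(k)}=s_{k+1}\circ\imath_k=s_k=\id_{\widehat{\omega\cE}}\circ s_k
\]
for all $k$; since the $s_k$ are jointly epimorphic, $\mu\circ\eta=\id_{\widehat{\omega\cE}}$. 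Symmetrically, $\eta\circ\mu\circ\overline s_k=\overline s_{k+1}\circ\eta^{(k+1)}\circ\mu^{(k)}=\overline s_{k+1}\circ\overline\imath_k=\overline s_k$, whence $\eta\circ\mu=\id_{\overline{\omega\cE}}$. This yields the desired isomorphism in $\omega\cat$.

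I do not expect a genuine obstacle here: all the combinatorial and homotopical content has already been absorbed into \cref{lem:comparaison widehat and overline 2}, and what remains is the standard ``interleaving of two sequential colimits'' argument, for which the two triangle identities above are precisely what is needed. The only point that requires mild care is the dimension/index shift --- $\mu^{(k)}$ lands in $\widehat{\omega\cE}^{(k+1)}$ rather than $\widehat{\omega\cE}^{(k)}$ --- which is dealt with by passing to a cofinal subtower as indicated.
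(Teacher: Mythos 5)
Your proposal is correct and follows essentially the same route as the paper: the paper's proof likewise reads the interleaving identities off \eqref{PropertyMuEta}, observes that the $\eta^{(\indind)}$ and $\mu^{(\indind)}$ assemble into natural transformations of towers, passes to the sequential colimit, and deduces from \eqref{PropertyMuEta} that the induced maps are mutually inverse. You merely make explicit some steps the paper leaves implicit (that $U$ preserves the colimit, the cofinal reindexing for $\mu$, and the joint-epimorphicity argument), all of which are sound.
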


\begin{proof}
From the property \eqref{PropertyMuEta}, one can deduce that the $\omega$-functors $\eta^{(\indind)}$ and $\mu^{(\indind)}$ from \cref{lem:comparaison widehat and overline 2} define by construction the components of two natural transformations with respect to $\indind\in\mathbb N$. By taking the $\omega$-functor induced at the level of colimits over $n\in\mathbb N$ we then obtain $\omega$-functors
\[
\colim_{\indind\in\mathbb N}
 \eta^{(\indind)}\colon  \colim_{\indind\in\mathbb N}\widehat{\omega\mathcal{E}}^{(\indind)}\to  \colim_{\indind\in\mathbb N}\overline{\omega\mathcal{E}}^{(\indind)}, \;\; 
     \colim_{\indind\in\mathbb N}\mu^{(\indind)}\colon  \colim_{\indind\in\mathbb N}\overline{\omega\mathcal{E}}^{(\indind)}\to  \colim_{\indind\in\mathbb N}\widehat{\omega\mathcal{E}}^{(\indind+1)},
\]
which can be identified with $\omega$-functors
\[
 \eta\colon\widehat{\omega\mathcal{E}}\to \overline{\omega\mathcal{E}}\quad\text{ and }\quad
    \mu\colon \overline{\omega\mathcal{E}}\to \widehat{\omega\mathcal{E}}.
\]
From the property \eqref{PropertyMuEta}, one can also deduce that $\mu$ and $\eta$ are inverse to each other, concluding the proof.
\end{proof}

\begin{lem} \label{lem:natural_and_sharp_are_the_same_for_both_biinvertibilities}
    The inverse isomorphisms $\mu$ and $\eta$ in $\omega\cat$
    induce inverse
    isomorphisms in $\omegacat^+$
    \[
        \mu\colon \overline{\omega\cE}^\natural = \overline{\omega\cE}^\sharp \cong \widehat{\omega\cE}^\sharp = \widehat{\omega\cE}^\natural :\! \eta.
    \]
\end{lem}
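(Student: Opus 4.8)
The plan is to reduce the statement to two facts about markings and then observe that everything else is formal. Specifically, I would first show that in both $\widehat{\omega\cE}$ and $\overline{\omega\cE}$ every positive-dimensional cell is an $\omega$-equivalence; in the notation of the paper this says $\eq\widehat{\omega\cE}=\mor\widehat{\omega\cE}$ and $\eq\overline{\omega\cE}=\mor\overline{\omega\cE}$, which are exactly the asserted equalities of marked $\omega$-categories $\widehat{\omega\cE}^\natural=\widehat{\omega\cE}^\sharp$ and $\overline{\omega\cE}^\natural=\overline{\omega\cE}^\sharp$. Once this is in place, the inverse $\omega$-functors $\mu\colon\overline{\omega\cE}\to\widehat{\omega\cE}$ and $\eta\colon\widehat{\omega\cE}\to\overline{\omega\cE}$ from \cref{underlying omega category of overline omega cE} automatically preserve the $\sharp$-marking, because any $\omega$-functor carries positive-dimensional cells to positive-dimensional cells; hence they promote to marked $\omega$-functors $\overline{\omega\cE}^\sharp\rightleftarrows\widehat{\omega\cE}^\sharp$ which remain mutually inverse, giving the claimed isomorphism in $\omegacat^+$.

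For the first marking identification I would invoke the work already done: by \cref{prop:all_cells_of_omegahat_are_equivalences}, every $n$-cell of $\widehat{\omega\cE}$ with $n>0$ belongs to $\bieq_n\widehat{\omega\cE}$, and by \cref{prop:invertible is the same as biinvertible}, $\bieq_n\widehat{\omega\cE}=\eq_n\widehat{\omega\cE}$; combining these gives $\eq\widehat{\omega\cE}=\mor\widehat{\omega\cE}$ directly.

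For the second identification I would transport along the isomorphism $\mu$, using only that $(-)^\natural\colon\omega\cat\to\omegacat^+$ is a functor. Applying functoriality to $\eta$ gives $\eta(\eq\widehat{\omega\cE})\subseteq\eq\overline{\omega\cE}$ and to $\mu$ gives $\mu(\eq\overline{\omega\cE})\subseteq\eq\widehat{\omega\cE}$; since $\mu$ and $\eta$ are inverse, these inclusions force $\eq\overline{\omega\cE}=\eta(\eq\widehat{\omega\cE})$. Then, because $\eta$ is an isomorphism of $\omega$-categories it restricts to a bijection $\mor\widehat{\omega\cE}\to\mor\overline{\omega\cE}$, so $\eq\overline{\omega\cE}=\eta(\eq\widehat{\omega\cE})=\eta(\mor\widehat{\omega\cE})=\mor\overline{\omega\cE}$, as needed; together with the previous paragraph and the reduction above, the lemma follows.

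I do not expect a genuine obstacle here: all the substance has already been absorbed into \cref{prop:all_cells_of_omegahat_are_equivalences} (every cell of $\widehat{\omega\cE}$ is a biequivalence) and \cref{underlying omega category of overline omega cE} (the underlying $\omega$-categories are isomorphic). The only step that needs a moment's thought is the isomorphism-invariance of the set of $\omega$-equivalences, and even that is free once one uses functoriality of $(-)^\natural$ applied to both $\mu$ and $\eta$, rather than arguing directly from the definition of invertibility sets.
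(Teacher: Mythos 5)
Your proof is correct and follows essentially the same route as the paper: both use \cref{prop:all_cells_of_omegahat_are_equivalences} together with \cref{prop:invertible is the same as biinvertible} to get $\widehat{\omega\cE}^\sharp=\widehat{\omega\cE}^\natural$, transport this equality across the isomorphism of \cref{underlying omega category of overline omega cE} to get $\overline{\omega\cE}^\sharp=\overline{\omega\cE}^\natural$, and promote $\mu$, $\eta$ to marked isomorphisms via functoriality of $(-)^\sharp$. Your explicit two-sided functoriality argument for the transport step is merely a more detailed spelling-out of what the paper leaves implicit.
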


\begin{proof}

Since $(-)^\sharp$ is a functor we obtain inverse isomorphisms in $\omegacat^+$
   \[
        \mu\colon \overline{\omega\cE}^\sharp \cong \widehat{\omega\cE}^\sharp :\! \eta.
    \]
    By \cref{prop:all_cells_of_omegahat_are_equivalences,prop:invertible is the same as biinvertible}, all cells of $\widehat{\omega\cE}$ above dimension 0 are $\omega$-equivalences, which implies that \[\widehat{\omega\cE}^\sharp = \widehat{\omega\cE}^\natural.\]
    By \cref{underlying omega category of overline omega cE} we obtain that
\[\overline{\omega\cE}^\sharp = \overline{\omega\cE}^\natural.\]
    This concludes the proof.
\end{proof}

\begin{prop}
\label{lemma:first canonical weak equivalence}
The $\omega$-functor $\mu$ determines an acyclic cofibration in $\omegacat^+_{\mathrm{coind}}$
\[\mu\colon(\overline{\omega\cE},t\overline{\omega\cE})\hookrightarrow\overline{\omega\cE}^{\natural} \cong \widehat{\omega\cE}^{\natural}\]
\end{prop}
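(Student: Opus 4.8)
The plan is to exhibit the marked $\omega$-functor $\mu\colon(\overline{\omega\cE},t\overline{\omega\cE})\to\widehat{\omega\cE}^{\natural}$ as a composite of two acyclic cofibrations, namely as
\[(\overline{\omega\cE},t\overline{\omega\cE})\hookrightarrow\overline{\omega\cE}^{\natural}\xrightarrow{\ \cong\ }\widehat{\omega\cE}^{\natural},\]
where the first map is the identity on underlying $\omega$-categories and the second is the isomorphism supplied by \cref{lem:natural_and_sharp_are_the_same_for_both_biinvertibilities}. That this composite has underlying $\omega$-functor equal to $\mu$ is immediate from \cref{underlying omega category of overline omega cE}, which identifies $U(\overline{\omega\cE},t\overline{\omega\cE})=\overline{\omega\cE}$ with $\widehat{\omega\cE}$ via $\mu$.

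First I would check that the first map is a well-defined marked $\omega$-functor, i.e.\ that $t\overline{\omega\cE}\subseteq\eq\overline{\omega\cE}$. This follows from \cref{lem:natural_and_sharp_are_the_same_for_both_biinvertibilities}, which gives $\overline{\omega\cE}^{\natural}=\overline{\omega\cE}^{\sharp}$, i.e.\ $\eq\overline{\omega\cE}=\mor\overline{\omega\cE}$, together with the fact that a marking consists of cells of strictly positive dimension; alternatively, one can transport \cref{prop:all_cells_of_omegahat_are_equivalences} and \cref{prop:invertible is the same as biinvertible} along the isomorphism $\eta\colon\widehat{\omega\cE}\cong\overline{\omega\cE}$. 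With this in place, \cref{lemma:an other important acyclic cofibration} applies directly and shows that $(\overline{\omega\cE},t\overline{\omega\cE})\hookrightarrow\overline{\omega\cE}^{\natural}$ is an acyclic cofibration in $\omegacat^+_{\mathrm{coind}}$.

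The second map is an isomorphism in $\omegacat^+$ by \cref{lem:natural_and_sharp_are_the_same_for_both_biinvertibilities}, hence trivially an acyclic cofibration, since an isomorphism has the left lifting property against every morphism. As acyclic cofibrations are closed under composition in the left semi-model structure $\omegacat^+_{\mathrm{coind}}$, the composite of the two maps above — which is exactly $\mu$ — is an acyclic cofibration, as desired.

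I do not anticipate a genuine obstacle here: all of the substantive work has already been carried out in the preceding lemmas, in particular in identifying the underlying $\omega$-category of $(\overline{\omega\cE},t\overline{\omega\cE})$ with $\widehat{\omega\cE}$ and in showing (via \cref{prop:all_cells_of_omegahat_are_equivalences}) that every positive-dimensional cell of $\widehat{\omega\cE}$ is an $\omega$-equivalence. The only point requiring a moment's care is verifying that the first map of the factorization is a legitimate marked $\omega$-functor, which reduces to the inclusion $t\overline{\omega\cE}\subseteq\eq\overline{\omega\cE}$ noted above.
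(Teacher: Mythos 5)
Your proposal is correct and follows essentially the same route as the paper: both factor $\mu$ through the canonical inclusion $(\overline{\omega\cE},t\overline{\omega\cE})\hookrightarrow\overline{\omega\cE}^{\natural}$ (an acyclic cofibration by \cref{lemma:an other important acyclic cofibration}) followed by the isomorphism $\overline{\omega\cE}^{\natural}\cong\widehat{\omega\cE}^{\natural}$ from \cref{lem:natural_and_sharp_are_the_same_for_both_biinvertibilities}. Your explicit verification that $t\overline{\omega\cE}\subseteq\eq\overline{\omega\cE}$ is a hypothesis the paper leaves implicit, and you check it correctly.
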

\begin{proof}
    The existence of the marked $\omega$-functor follows from Lemma \ref{lem:natural_and_sharp_are_the_same_for_both_biinvertibilities} and the adjunction $U \dashv (-)^\sharp$, and the fact that it is an acyclic cofibration follows from \cref{lemma:an other important acyclic cofibration,underlying omega category of overline omega cE}.
\end{proof}

\begin{lem}
\label{lemma:second canonical weak equivalence}
Given $\indind\geq0$, the marked $\omega$-functor
\[f_{\indind}\colon\cC_1^{\sharp}\to(\overline{\omega\cE}^{(\indind)},t\overline{\omega\cE}^{(\indind)})\]
is an acyclic cofibration in $\omegacat^+_{\mathrm{coind}}$. In particular, by two-out-of-three for weak equivalences in $\omegacat^+_{\mathrm{coind}}$, we obtain that the marked $\omega$-functor
\[\overline{\imath}_{\indind}\colon(\overline{\omega\cE}^{(\indind)},t\overline{\omega\cE}^{(\indind)})\to (\overline{\omega\cE}^{(\indind+1)},t\overline{\omega\cE}^{(\indind+1)})\]
is an acyclic cofibration in $\omegacat^+_{\mathrm{coind}}$.
\end{lem}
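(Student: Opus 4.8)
The plan is to recognize $f_\indind$ as a finite composite of maps already known to be acyclic cofibrations, and then apply the closure of the class of acyclic cofibrations under composition in $\omegacat^+_{\mathrm{coind}}$. The first step is pure bookkeeping: unwinding \cref{TruncatedOverlineGuy}, for $\indind=0$ one has $(\overline{\omega\cE}^{(0)},t\overline{\omega\cE}^{(0)})=\cC_1^\sharp$ and $f_0=\id_{\cC_1^\sharp}$, while for $\indind\geq1$ the marked $\omega$-functor $f_\indind$ is by construction the composite
\[
\cC_1^\sharp=(\overline{\omega\cE}^{(0)},t\overline{\omega\cE}^{(0)})\xrightarrow{\overline\imath_1}(\overline{\omega\cE}^{(1)},t\overline{\omega\cE}^{(1)})\xrightarrow{\overline\imath_2}\cdots\xrightarrow{\overline\imath_\indind}(\overline{\omega\cE}^{(\indind)},t\overline{\omega\cE}^{(\indind)}),
\]
where $\overline\imath_1$ is the marked $\omega$-functor $f\colon\cC_1^\sharp\to(\overline{\cQ},t\overline{\cQ})$.

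Next I would invoke acyclicity of the building blocks: $\overline\imath_1=f$ is an acyclic cofibration by \cref{lemma:an important acyclic cofibration}, and each $\overline\imath_j$ with $1<j\leq\indind$ is an acyclic cofibration by \cref{lemma:another important acyclic cofibration}. Since in a left semi-model structure the class of acyclic cofibrations is closed under composition (indeed transfinite composition), the composite $f_\indind$ is an acyclic cofibration; this proves the first assertion for every $\indind\geq0$.

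Finally, for the ``in particular'' clause, I would use the factorization $f_{\indind+1}=\overline\imath_{\indind+1}\circ f_\indind$: both $f_\indind$ and $f_{\indind+1}$ are weak equivalences by the previous step, so two-out-of-three for weak equivalences in $\omegacat^+_{\mathrm{coind}}$ forces $\overline\imath_{\indind+1}$ to be a weak equivalence; since it is also a cofibration between cofibrant objects (by \cref{lemma:another important acyclic cofibration} together with \cref{thm:model structure on marked omega categories}\ref{MarkCofib}), \cref{thm:model structure on marked omega categories}\ref{MarkCofibCofibWE} upgrades it to an acyclic cofibration. I do not expect a genuine obstacle here: the only point to get right is the bookkeeping identification of $f_\indind$ with the displayed composite of structure maps straight from \cref{TruncatedOverlineGuy}, after which everything follows formally from results already proved and the standard closure properties of acyclic cofibrations. (The second assertion is in fact logically already contained in \cref{lemma:another important acyclic cofibration}, so one could simply cite it; the value of deriving it here is to record the relation $f_{\indind+1}=\overline\imath_{\indind+1}\circ f_\indind$ for later use.)
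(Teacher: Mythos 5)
Your proposal is correct, but it takes a different route from the paper's. The paper proves the first assertion by a fresh induction on $\indind$: assuming $f_{\indind-1}$ is an acyclic cofibration, it applies \cref{SuspensionHomotopical} to get that $\Sigma\cC_1^\sharp\amalg\Sigma\cC_1^\sharp\to\Sigma(\overline{\omega\cE}^{(\indind-1)},t\overline{\omega\cE}^{(\indind-1)})\amalg\Sigma(\overline{\omega\cE}^{(\indind-1)},t\overline{\omega\cE}^{(\indind-1)})$ is an acyclic cofibration, pushes this out (along the pasting of the squares \eqref{omegacen_pushout2}) to conclude that $(\overline{\cQ},t\overline{\cQ})\to(\overline{\omega\cE}^{(\indind)},t\overline{\omega\cE}^{(\indind)})$ is an acyclic cofibration, and then precomposes with $f_1$; it never invokes \cref{lemma:another important acyclic cofibration}. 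You instead decompose $f_\indind$ as $\overline\imath_\indind\circ\cdots\circ\overline\imath_1$ and cite \cref{lemma:another important acyclic cofibration} for each factor, which is legitimate since that lemma is stated and proved earlier, and closure of acyclic cofibrations under (transfinite) composition is available. What your route buys is brevity; what the paper's route buys is independence from the rather terse proof of \cref{lemma:another important acyclic cofibration} --- in effect, the paper's argument here is the detailed version of the suspension-plus-pushout induction that the earlier lemma only sketches, packaged around $f_\indind$ instead of $\overline\imath_\indind$. Your closing observation that the second assertion is already contained in \cref{lemma:another important acyclic cofibration}, so that the two-out-of-three step (which is sound: $f_\indind$ and $f_{\indind+1}$ are weak equivalences as acyclic cofibrations between cofibrant objects, and \cref{thm:model structure on marked omega categories}\ref{MarkCofibCofibWE} upgrades the resulting weak equivalence $\overline\imath_{\indind+1}$ to an acyclic cofibration) is strictly speaking redundant, is an accurate reading of the paper's structure.
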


\begin{proof}
We prove this by induction on $\indind\geq1$. The base case $\indind=1$ is \cref{lemma:an important acyclic cofibration}, and we now show the induction step, assuming the statement to be true for $\indind-1$. We have that the marked $\omega$-functor
\[f_{\indind-1}\colon\cC_1^{\sharp}\to(\overline{\omega\cE}^{(\indind-1)},t\overline{\omega\cE}^{(\indind-1)})\]
is an acyclic cofibration in $\omegacat^+_{\mathrm{coind}}$. By \cref{SuspensionHomotopical},
we obtain that the marked $\omega$-functor
\[\Sigma\cC_1^{\sharp}\amalg \Sigma\cC_1^{\sharp}\to\Sigma(\overline{\omega\cE}^{(\indind-1)},t\overline{\omega\cE}^{(\indind-1)})\amalg\Sigma(\overline{\omega\cE}^{(\indind-1)},t\overline{\omega\cE}^{(\indind-1)})\]
is an acyclic cofibration in $\omegacat^+_{\mathrm{coind}}$.
By closure of the class of acylic cofibrations under pushouts, we obtain that the marked $\omega$-functor
\[(\overline{\cQ},t\overline{\cQ})\to(\overline{\omega\cE}^{(\indind)},t\overline{\omega\cE}^{(\indind)})\]
is an acyclic cofibration in $\omegacat^+_{\mathrm{coind}}$. By \cref{lemma:an important acyclic cofibration}, we obtain that the composite marked $\omega$-functor
\[f_{\indind}\colon\cC_1^\sharp\xrightarrow{f_1}(\overline{\cQ},t\overline{\cQ})\to(\overline{\omega\cE}^{(\indind)},t\overline{\omega\cE}^{(\indind)})\]
is an acyclic cofibration in $\omegacat^+_{\mathrm{coind}}$, as desired.
\end{proof}

\begin{prop}
\label{lemma:third canonical weak equivalence}
The unique marked $\omega$-functor
\[(\overline{\omega\cE},t\overline{\omega\cE})\to \cC_0^{\sharp}\]
is a weak equivalence in $\omegacat^+_{\mathrm{coind}}$.
\end{prop}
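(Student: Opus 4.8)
The plan is to deduce the statement from \cref{lemma:another important acyclic cofibration2} and the weak contractibility of $\cC_1^\sharp$, applying twice the two-out-of-three property for weak equivalences in $\omegacat^+_{\mathrm{coind}}$.

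First I would show that the unique marked $\omega$-functor $\cC_1^\sharp\to\cC_0^\sharp$ is a weak equivalence. Consider the codomain inclusion $i_0^+\colon\cC_0^\sharp\to\cC_1^\sharp$: after identifying $(\cC_1,\{e_1\}\cup\id\cC_1)$ with $\cC_1^\sharp$, this is precisely the map $i_n^+$ of \cref{thm:model structure on marked omega categories}\ref{MarkFibFib} for $n=0$. By that characterization, every fibration between fibrant objects of $\omegacat^+_{\mathrm{coind}}$ has the right lifting property against $i_0^+$; hence $i_0^+$ has the left lifting property against all such fibrations, that is, it is an acyclic cofibration. Since moreover its underlying $\omega$-functor $\cC_0\to\cC_1$ is a cofibration in $\omega\cat_{\mathrm{can}}$ and $\cC_0$, $\cC_1$ are polygraphs, $i_0^+$ is a cofibration between cofibrant objects, so by \cref{thm:model structure on marked omega categories}\ref{MarkCofibCofibWE} it is a weak equivalence. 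As $\cC_0^\sharp$ is terminal in $\omegacat^+$, the composite $\cC_0^\sharp\xrightarrow{i_0^+}\cC_1^\sharp\to\cC_0^\sharp$ is the identity, so two-out-of-three forces $\cC_1^\sharp\to\cC_0^\sharp$ to be a weak equivalence.

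I would then recall from \cref{TruncatedOverlineGuy} that $(\overline{\omega\cE}^{(0)},t\overline{\omega\cE}^{(0)})=\cC_1^\sharp$, and that by \cref{lemma:another important acyclic cofibration2} applied with $k=0$ the structure map
\[\overline\imath_{0,\infty}\colon\cC_1^\sharp=(\overline{\omega\cE}^{(0)},t\overline{\omega\cE}^{(0)})\hookrightarrow(\overline{\omega\cE},t\overline{\omega\cE})\]
is an acyclic cofibration, in particular a weak equivalence. Since $\cC_0^\sharp$ is terminal, the composite of $\overline\imath_{0,\infty}$ with $(\overline{\omega\cE},t\overline{\omega\cE})\to\cC_0^\sharp$ is the unique map $\cC_1^\sharp\to\cC_0^\sharp$, which is a weak equivalence by the previous paragraph; a final application of two-out-of-three shows that $(\overline{\omega\cE},t\overline{\omega\cE})\to\cC_0^\sharp$ is a weak equivalence, as claimed.

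The argument is formal once the coinductive left semi-model structure is available; the only point needing care is in the first paragraph, where one must check that $i_0^+$ is genuinely a cofibration between cofibrant objects (so that \cref{thm:model structure on marked omega categories}\ref{MarkCofibCofibWE} applies and upgrades the tautological acyclic cofibration to a bona fide weak equivalence), together with the harmless but essential identification $(\cC_1,\{e_1\}\cup\id\cC_1)=\cC_1^\sharp$. It is worth stressing that this introduces no circularity: the weak contractibility of $\cC_1^\sharp$ is built into $\omegacat^+_{\mathrm{coind}}$ through the choice of generating maps in \cref{thm:model structure on marked omega categories}\ref{MarkFibFib}, and does not rely on the contractibility of $\widehat{\omega\cE}$.
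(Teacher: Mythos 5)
Your proof is correct and follows essentially the same route as the paper: both factor the point inclusion through $\cC_1^\sharp$, observe that $i_0^+\colon\cC_0^\sharp\to\cC_1^\sharp$ is an acyclic cofibration because it is one of the generating maps of \cref{thm:model structure on marked omega categories}\ref{MarkFibFib}, show that the inclusion of $\cC_1^\sharp$ into $(\overline{\omega\cE},t\overline{\omega\cE})$ is an acyclic cofibration by closure under transfinite composition, and conclude by two-out-of-three. The only cosmetic difference is that you cite \cref{lemma:another important acyclic cofibration2} directly for the second step, whereas the paper reassembles the same chain from \cref{lemma:an important acyclic cofibration,lemma:second canonical weak equivalence}, passing explicitly through $(\overline{\cQ},t\overline{\cQ})$.
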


\begin{proof}
The marked $\omega$-functor
\[i_0^+\colon\cC_0^{\sharp}\hookrightarrow  \cC_1^{\sharp}\]
is by \cref{thm:model structure on marked omega categories} a weak equivalence in
$\omegacat^+_{\mathrm{coind}}$.
The marked $\omega$-functor
\[
f_1\colon\cC_1^{\sharp}\hookrightarrow(\overline{\cQ},t\overline{\cQ})
\]
is a weak equivalence in $\omegacat^+_{\mathrm{coind}}$ by \cref{lemma:an important acyclic cofibration}. The marked $\omega$-functor
\[(\overline{\cQ},t\overline{\cQ})\to(\overline{\omega\cE}^{(\indind)},t\overline{\omega\cE}^{(\indind)})\to(\overline{\omega\cE}^{(\indind+1)},t\overline{\omega\cE}^{(\indind+1)})\to\dots\to  (\overline{\omega\cE},t\overline{\omega\cE})\]
is a weak equivalence in $\omegacat^+_{\mathrm{coind}}$
by \cref{lemma:second canonical weak equivalence}, using the fact that acyclic cofibrations are closed under transfinite composition.
So the composite marked $\omega$-functor
\[\cC_0^{\sharp}\xhookrightarrow{i_0^+} \cC_1^{\sharp}\xhookrightarrow{f_1}(\overline{\cQ},t\overline{\cQ})\to  (\overline{\omega\cE},t\overline{\omega\cE})\]
is a weak equivalence in $\omegacat^+_{\mathrm{coind}}$. By two-out-of-three,
the unique $\omega$-functor
\[(\overline{\omega\cE},t\overline{\omega\cE})\to\cC_0^{\sharp}\]
is then also a weak equivalence in $\omegacat^+_{\mathrm{coind}}$, as desired.
\end{proof}

We can finally now prove the main theorem, namely that the unique morphism $\widehat{\omega\cE}\to \cC_0$
is a weak equivalence in $\omega\cat_{\mathrm{can}}$:

\begin{proof}[Proof of \cref{omegaEcontractible}]
Consider the commutative diagram in $\omegacat^+_{\mathrm{coind}}$
\[\begin{tikzcd}
(\overline{\omega\cE},t\overline{\omega\cE})\arrow[r]\arrow[rd]&\widehat{\omega\cE}^{\natural}\arrow[d]\\
&\cC_0^{\sharp}.
\end{tikzcd}\]
By \cref{lemma:first canonical weak equivalence,lemma:third canonical weak equivalence}, the top and the diagonal marked $\omega$-functors are weak equivalences in $\omegacat^+_{\mathrm{coind}}$.
By two-out-of-three, so is the right vertical marked $\omega$-functor
\[\widehat{\omega\cE}^{\natural}\to \cC_0^{\sharp}.\]
By \cref{thm:model structure on marked omega categories}\ref{MarkFibObj}, the marked $\omega$-categories $\widehat{\omega\cE}^{\natural}$ and $\cC_0^{\sharp}$ are fibrant in $\omegacat^+_{\mathrm{coind}}$. By \cref{thm:model structure on marked omega categories}\ref{MarkFibWE}, the forgetful functor $U\colon\omegacat^+_{\mathrm{coind}}\to\omega\cat_{\mathrm{can}}$ preserves weak equivalences between fibrant objects, so the unique $\omega$-functor
\[\widehat{\omega\cE}=U(\widehat{\omega\cE}^{\natural})\to U(\cC_0^{\natural})= U(\cC_0^{\sharp})=\cC_0\]
is a weak equivalence in $\omega\cat_{\mathrm{can}}$, as desired.
\end{proof}

\bibliographystyle{amsalpha}
\bibliography{ref}

\end{document}